\numberwithin{equation}{section}
\newtheorem{problem}{Problem}
\newtheorem{thm}{Theorem}[section]
\newtheorem{prp}[thm]{Proposition}
\newtheorem{lem}[thm]{Lemma}
\newtheorem{cor}[thm]{Corollary}
\newtheorem{conj}[thm]{Conjecture}
\theoremstyle{definition}
\newtheorem{dfn}[thm]{Definition}
\newtheorem{rem}[thm]{Remark}
\newcommand{\Href}[2]{\hyperref[#2]{#1~\ref{#2}}}
\newcommand{\Hrefs}[3]{\hyperref[#2]{#1~\ref{#2}} and \hyperref[#3]{\ref{#3}}}
\newcommand{\Hreftitle}[2]{\texorpdfstring{
    \hspace{-4pt}\Href{#1}{#2}}{#1~\ref{#2}}}
\newcommand{\Hrefstitle}[3]{\texorpdfstring{%
    \Hrefs{#1}{#2}{#3}}{#1~\ref{#2} and \ref{#3}}%
}
\def\phi{\varphi}
\def\epsilon{\varepsilon}
\def\alpha{\upalpha}
\renewcommand{\Re}{\mathbb{R}}
\newcommand{\Red}{\Re^d}
\newcommand{\ReN}{\Re^n}
\newcommand{\ball}[1]{\mathbf{B}^{#1}}%
\newcommand{\set}[1]{\left\{#1\right\}}
\newcommand{\braces}[1]{\left\{#1\right\}}
\newcommand{\norm}[1]{\left\|#1\right\|}
\newcommand{\conv}[1]{\operatorname{conv} #1 }
\newcommand{\posname}{\operatorname{pos}}
\newcommand{\pos}[1]{\posname\left(#1\right)}
\newcommand{\bd}[1]{\operatorname{bd} #1}
\newcommand{\cl}[1]{\operatorname{cl}\left(#1\right)}
\newcommand{\inter}[1]{\mathrm{int} #1}
\newcommand{\iprod}[2]{\left\langle#1,#2\right\rangle}
\newcommand{\iprodsp}[3][\rotpos]{\left\langle#2,#3\right\rangle_{#1}}
\newcommand{\vol}[2][d]{\operatorname{vol}\nolimits_{#1}\left(#2\right)}%
\newcommand{\volnf}[2][d]{\operatorname{vol}\nolimits_{#1} #2}%
\newcommand{\tr}[1]{\operatorname{tr}\left(#1\right)}
\newcommand{\trace}[1]{\operatorname{tr} #1 }
\newcommand{\rank}[1]{\operatorname{rank}\left(#1\right)}
\newcommand{\inrad}[1]{\operatorname{inrad} #1}
\newcommand{\st}{:\;}
\newcommand{\di}{\,\mathrm{d}}
\newcommand{\pol}{^{\circ}}
\newcommand{\enorm}[1]{\left|#1\right|}
\newcommand{\lin}[1]{\operatorname{Lin}\left(#1\right)}
\newcommand{\crosp}{\Diamond}%
\newcommand{\noshow}[1]{}
\newcommand{\contact}{\mathrm{Cont}_{\Red}}
\newcommand{\sqmatrix}[1]{\mathcal{M}^{#1}}
\newcommand{\Symmatrix}[1]{\mathcal{M}_{\mathrm{Sym}}^{#1}}
\newcommand{\ucontopcr}{\mathrm{Cont}_{\mathcal{R}}}
\newcommand{\uconttensorcr}{\mathcal{C}_{\mathcal{R}}}
\newcommand{\rotpos}{\mathcal{R}}
\newcommand{\gammaderivatives}{\mathcal{D}}
\newcommand{\transpose}[1]{{#1}^{\top}}%
\newcommand{\inv}[1]{{#1}^{-1}}%
\newcommand{\ort}[1]{{#1}^{\bot}}%
\newcommand{\conerotpos}{\mathcal{V}_{\rotpos}}
\providecommand{\parenth}[1]{\left(#1\right)}
\providecommand{\braces}[1]{\left\{#1\right\}}
\newcommand{\id}{\mathrm{Id}}
\newcommand{\origin}{\mathbf{0}}
\newcommand{\originmsp}[1]{\mathbf{0}_{#1}}
\newcommand{\aposition}[1][F]{\mathcal{A}_{#1}}%
\newcommand{\apositions}[2][F]{\aposition[#1] \! \parenth{#2}}%
\newcommand{\rposition}[1][F]{\mathcal{R}_{#1}}%
\newcommand{\rpositions}[2][F]{\rposition[#1] \! \parenth{#2}}%
\def\polar{\circ}
\newcommand{\polarset}[1]{{#1}^{\polar}}%
\newcommand{\indicator}[1]{\mathbf{1}_{#1}}%
\newcommand{\ncone}[2]{N_{#1}\!\parenth{ #2}}%
\newcommand{\poscone}[1]{\mathrm{Pos}\ \! {#1}}%
\newcommand{\tancone}[2]{\mathrm{T}_{#1} \! \parenth{#2}}%
\newcommand{\cube}{\Box}%
\newcommand{\diffzero}{\left.\frac{\di}{\di t}\right|_{t=0^+}}
\title{John Ellipsoids of Revolution}
\author{Grigory Ivanov}
\address{G.~Ivanov: Pontif\'icia Universidade Cat\'olica do Rio de Janeiro\\
Departamento de Matem\'atica\\
Rua Marqu\^es de S\~ao Vicente, 225\\
Edif\'{\i}cio Cardeal Leme, sala 862\\
22451-900 G\'avea, Rio de Janeiro, Brazil}
\email{grimivanov@gmail.com}
\author{Zsolt L\'angi}
\address{Z.~L\'angi: Bolyai Institute, University of Szeged and HUN-REN Alfr\'ed R\'enyi Institute of Mathematics, Hungary}
\email{zlangi@server.math.u-szeged.hu}
\author{M\'arton Nasz\'odi}
\address{M.~Nasz\'odi: HUN-REN Alfr\'ed R\'enyi Institute of Mathematics and Lor\'and E\"otv\"os University, Budapest, Hungary}
\email{marton.naszodi@renyi.hu}
\author{\'Ad\'am Sagmeister}
\address{\'A.~Sagmeister: Bolyai Institute, University of Szeged, Aradi vértanúk tere 1, H-6720 Szeged,
Hungary}
\email{sagmeister.adam@gmail.com}
\subjclass[2020]{Primary 52A23; Secondary 52A40, 46T99}
\keywords{John ellipsoid, body of revolution, convexity, affine image}
\begin{document}
\begin{abstract}
Finding a \emph{largest Euclidean ball} in a given convex body $K \subset \mathbb{R}^d$ and finding a \emph{largest volume ellipsoid} in $K$ are two problems of fundamentally different nature. The first is a purely Euclidean problem, where we consider scaled copies of the origin-centered closed unit ball, whereas in the second problem, we search among all affine copies of the unit ball.

In this paper, we interpolate between these two classical problems by considering ellipsoids of revolution. More generally, we study pairs of convex bodies $K$ and $L$, and seek a largest-volume affine image of $K$ contained within $L$, subject to certain restrictions on the allowed affine transformations. We derive first-order necessary conditions for optimality, generalizing known conditions from the unrestricted affine setting.

Using these conditions, we show that an extremal ellipsoid of revolution exhibits properties analogous to those of either the largest-volume ellipsoid or the largest Euclidean ball, depending on whether the ellipsoid is considered along its axis of revolution or along the orthogonal complement of that axis.

\end{abstract}

\vspace{-1cm}
\maketitle

\section{Introduction}

Finding the \emph{largest Euclidean ball} contained in a convex body $K \subset \Red$ and finding the \emph{largest volume ellipsoid} contained in $K$ are problems of fundamentally different nature. The former is a Euclidean problem involving scaled copies of the origin-centered unit ball $\ball{d}$, while the latter involves all affine images of $\ball{d}$. In this paper, we study a natural interpolation between these two settings by considering \emph{ellipsoids of revolution}. More generally, we investigate pairs of convex bodies $K$ and $L$ and aim to determine the largest volume affine image of $K$ contained in $L$, subject to specific restrictions on the class of admissible affine transformations. Our starting point is John’s classical theorem \cite{Jo48, Jo14} characterizing the maximal volume ellipsoid in a convex body and its generalizations to pairs of convex bodies \cite{L79, TJ89, BR02, GLMP04}. Our goal is to initiate the development of a similar theory for ellipsoids of revolution and for constrained affine images in this more general setting.

\subsection{Ellipsoids of revolution} In order to introduce ellipsoids of revolution, we start with defining a special class of linear operators. 

\begin{dfn}
For a linear subspace $F$ in $\Red$, we call an invertible linear operator $A$ on $\Red$ an \emph{$F$-operator}, if both $F$ and $\ort{F}$ are invariant subspaces of $A$, and the restriction $A|_{\ort{F}}$ of $A$ on the orthogonal complement $\ort{F}$ of $F$ is a (nonzero) multiple of the identity operator $\id_{\ort{F}}$ on $\ort{F}$.
\end{dfn}

Note that if we take an orthonormal basis of $F$, and extend it to an orthonormal basis of $\Red$, then the matrix of an $F$ operator takes the form 
\[
M = \begin{pmatrix} {M_1}& \mathbf{0}_{s \times (d-s)}\\ \mathbf{0}_{ (d-s) \times s} & \mu \id_{d-s}\end{pmatrix},
\]
where $M_1 \in \Re^{s\times s},$ $\mu \in \Re\setminus\{0\},$ 
$\mathbf{0}_{s \times (d-s)}$  and $\mathbf{0}_{ (d-s) \times s}$ are zero matrices of the appropriate sizes, and $\id_{d-s}$ denotes the identity matrix of size $d-s$.

\begin{dfn}
    Let $F$ be a linear subspace in $\Red$ and $S$ be any subset of $\Red$.
We will refer to the family of affine transformations
\[
 \aposition=\braces{x\mapsto Ax+z\st A \text{ is an } F\text{-operator, } z\in\Red}
\]
as the \emph{affine transformations with axis $F$}, and the family of sets 
\[
 \apositions{S}=\braces{AS+z\st A \text{ is an } F\text{-operator, } z\in\Red}
\]
as the \emph{positions of $S$ with axis $F$}.

In particular, we call members of $\apositions{\ball{d}}$ \emph{ellipsoids of revolution with axis $F$.} 
\end{dfn}

The naming is justified, since if $E\in\apositions{\ball{d}}$, then $E$ is an ellipsoid all of whose non-degenerate sections by translates of $\ort{F}$ are $(d-\dim{F})$-dimensional balls.
Our first problem is the following. 

\newcommand{\probellfixed}{John's problem for ellipsoids with a fixed axis of revolution}
\begin{problem}[\probellfixed]\label{prob:ellfixed}
For a given convex body (that is, a closed, convex set with non-empty interior) $K$ in $\Red$ and a linear subspace $F$ in $\Red$, find a largest volume ellipsoid of revolution with axis $F$ contained in $K$.
\end{problem}

Clearly, when $F=\{\origin\}$ is the 0-dimensional subspace, then this problem is equivalent to finding a largest volume Euclidean ball in $K$. On the other hand, when $F=\Red$, then it is exactly John's problem of finding the largest volume ellipsoid contained in $K$.

In \probellfixed, the subspace $F$ is fixed. It is natural to allow for the variation of $F$, which leads us to our second problem.

\newcommand{\probellrotation}{John's problem for ellipsoids with any axis of re\-vo\-lu\-tion}
\begin{problem}[\probellrotation]\label{prob:ellrotation}
Let $K$ be a convex body in $\Red$ and $s\in\{0,1,\dots,d\}$. Find a largest volume ellipsoid of revolution in $K$ with any $s$-dimensional axis.
\end{problem}

\subsection{A more general problem}

The problems above are motivated by John's fundamental theorem \cite{Jo48, Jo14}, where a necessary condition is given for the ball $\ball{d}$ to be the maximal volume ellipsoid contained in a convex body $K$. The sufficiency of the same condition was noted by K. Ball \cite{ball1997elementary}.

A natural generalization of this question was studied by V.~Milman \cite[Theorem~14.5]{TJ89}, Lewis \cite{L79}, Giannopoulos, Perissinaki and Tsolomitis \cite{GPT01}, Bastero and Romance \cite{BR02}, and Gordon, Litvak, Meyer and Pajor \cite{GLMP04}: given convex bodies $K$ and $L$ in $\Red$, find a maximal volume affine image of $K$ that is contained in $L$. In this work, we study the same problem under additional restrictions on the affine transformation.

\newcommand{\probgenfixed}{John's general problem with fixed axis}
\begin{problem}[\probgenfixed]\label{prob:genfixed}
For given convex bodies $K$ and $L$, and a linear subspace $F$ in $\Red$, find a largest volume member of $\apositions{K}$ contained in $L$.
\end{problem}

Clearly, when $F=\{\origin\}$ is the 0-dimensional subspace, then this problem is equivalent to finding a largest \emph{homothet} of $K$ in $L$. On the other hand, when $F=\Red$, then it is exactly the problem described in the paragraph preceding Problem~\ref{prob:genfixed}.

\begin{dfn}
Let $F$ be a linear subspace in $\Red$ and $S$ be any subset of $\Red$.
We will refer to the family of affine transformations
\[
 \rposition=\braces{x\mapsto RAx+z\st R\in O(d), A \text{ is an } F\text{-operator, } z\in\Red},
\]
where $O(d)$ denotes the orthogonal group on $\Red$,
as the \emph{rotated affine transformations with axis $F$}, and the family of sets
\[
 \rpositions{S}=\braces{RAS+z\st R\in O(d), A \text{ is an } F\text{-operator, } z\in\Red},
\]
as the \emph{rotated positions of $S$ with axis $F$}.
\end{dfn}

\newcommand{\probgenrotation}{John's general problem with rotated axis}
\begin{problem}[\probgenrotation]\label{prob:genrotation}
Let $K$ and $L$ be convex bodies in $\Red,$ and  fix a linear subspace $F$ in $\Red$. Find a largest volume rotated position of $K$ with axis $F$ that is contained in $L$.
\end{problem}

Note a subtle but very important difference between this general case and ellipsoids. To define ellipsoids of revolution with an $s$-dimensional axis, we only need to specify the dimension $s$ of the axis, but not the axis $F$. In the general case introduced here, we fix $F$, apply an $F$-operator $A$ and \emph{then} allow a rotation $R$. The advantage of this is that in this way, we can specify what `part' of $K$ we allow to be changed by an affine transformation (the $F$-component), and what part (the $\ort{F}$ component) is rigid, that is, it only allows similarities. A simple illustration is the task of finding the largest area isosceles triangle contained in a planar set $L$. In our framework, we simply take $K$ to be a regular triangle, and $F$ to be one of its altitude lines.  Then $\rpositions{K}$ is exactly the set of isosceles triangles.

When $F=\{\origin\}$, then \Href{Problem}{prob:genrotation} is the problem discussed at the beginning of this subsection considered by Lewis, Milman and others. On the other hand, when $F=\Red$, then we are looking for the largest \emph{similar} copy of $K$ in $L$.

\subsection{Results}

Our first goal is to find conditions of optimality in Problems~\ref{prob:ellfixed}, \ref{prob:ellrotation}, \ref{prob:genfixed} and \ref{prob:genrotation} similar to John's condition in the case of ellipsoids and the condition given by Gordon, Litvak, Meyer and Pajor \cite{GLMP04} in the general case. We start with \Hrefs{Problems}{prob:genfixed}{prob:genrotation}, as the optimality conditions in the case of ellipsoids of revolution will be obtained as corollaries to these general results.

Let $K$ and $L$ be convex bodies in $\Red$ with $K\subseteq L$. Following \cite{GLMP04}, we call $(v,p)\in\Red\times\Red$ a 
\emph{contact pair} of $K$ and $L$ if $v \in \bd{K}\cap\bd{L},$ and $p \in \bd{\polarset{K}} \cap \bd{\polarset{L}} $  normalized by the condition $\iprod{p}{v}=1$. In other words, $p$ is an outer normal vector of a common supporting hyperplane of $K$ and $L$ at $v$, with the appropriate normalization. 
We use $P_F$ to denote the \emph{orthogonal projection} onto $F$. We define the \emph{diadic product} of two vectors $v$ and $p$ by $(p\otimes v)x=\iprod{v}{x}p$, a linear operator on $\Red$. Note that the matrix of $p\otimes v$ in the standard basis is $p\transpose{v}$, where $v$ and $p$ are column vectors. For a positive integer $m$, we use the notation $[m]$ for the set $[m]=\{1,2,\dots,m\}$.

Our necessary condition of optimality for \Href{Problem}{prob:genfixed} 
is the following.

\begin{thm}[Condition for \probgenfixed{}]\label{thm:genfixed}
Let $K, L \subset \Red$ be convex bodies containing the origin in their interiors and such that $K \subseteq L$, and let $F$ be a linear subspace in $\Red$.
Assume that $K$ is a solution to \probgenfixed{} $F$ for $K$ and $L$, that is, \Href{Problem}{prob:genfixed}. Then there are 
\begin{enumerate}
    \item contact pairs $(v_1, p_1),\dots, (v_m, p_m)$ of $L$ and $K$,
    \item positive weights $\alpha_1,\dots,\alpha_m$,
\end{enumerate}
such that
  \begin{equation}\label{eq:answerGLMPfixedsubspace_decomposition}
    P_F \parenth{\sum\limits_{i \in [m]} \alpha_i p_i\otimes v_i} P_F = P_F;
  \end{equation}
  \begin{equation}\label{eq:answerGLMPfixedsubspace_zerosum}
    \sum\limits_{i \in [m]} \alpha_i p_i =\origin ;
  \end{equation}
  \noshow{
   \begin{equation}\label{eq:answerGLMPfixedsubspace_zerosumv}
     \sum\limits_{i \in [m]} \alpha_i P_F v_i =\origin ;
   \end{equation}
   }
    \begin{equation}\label{eq:answerGLMPfixedsubspace_trace}
  \tr{\sum\limits_{i \in [m]} \alpha_i p_i\otimes v_i} = \sum\limits_{i \in [m]} \alpha_i = d.
  \end{equation}
\end{thm}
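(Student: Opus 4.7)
The plan is to set up the statement as a first-order optimality condition at the identity and then apply a Farkas-type duality. I parametrize the family $\aposition$ near the identity by pairs $(Y,w)$, where $Y$ lies in the tangent space of $F$-operators (matrices preserving $F$ and $\ort F$ with $Y|_{\ort F}=\nu\,\id_{\ort F}$ for some $\nu\in\Re$) and $w\in\Red$ is an infinitesimal translation. Writing $A_t=I+tY$, the standard determinant expansion gives
\[
\left.\tfrac{d}{dt}\right|_{t=0^+}\log\vol{A_tK+tw}=\tr Y.
\]
For any contact pair $(v,p)$ of $K$ and $L$, the first-order condition for $A_tK+tw\subseteq L$ at $t=0^+$ is that $v$ not cross the common supporting hyperplane at $v$ with outer normal $p$, i.e.,
\[
\iprod{p}{Yv+w}\leq 0.
\]
Optimality of $K$ thus forces infeasibility of the system $\tr Y>0$, $\iprod{p}{Yv+w}\leq 0$ for every contact pair $(v,p)$, on the (finite-dimensional) tangent space $T:=\{(Y,w)\}$.

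Each contact pair defines a continuous linear functional $L_{(v,p)}(Y,w):=\iprod{p}{Yv+w}$ on $T$, and the assumption $\origin\in\inter{K}\subseteq\inter{L}$ together with $\iprod{p}{v}=1$ makes the set of contact pairs compact in $\Red\times\Red$. A Farkas-type lemma (finite-dimensional separation together with Carath\'eodory for convex cones) then turns the above infeasibility into the existence of finitely many contact pairs $(v_1,p_1),\dots,(v_m,p_m)$ and positive weights $\alpha_1,\dots,\alpha_m$ such that
\[
\tr Y \;=\;\sum_{i\in[m]}\alpha_i\,\iprod{p_i}{Yv_i+w}\qquad\text{for all }(Y,w)\in T.
\]

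I then split this identity along the coordinates of $T$. Setting $Y=0$ and letting $w\in\Red$ vary yields $\sum_i\alpha_i p_i=\origin$, which is \eqref{eq:answerGLMPfixedsubspace_zerosum}. For $w=0$, the identity $\iprod{p}{Yv}=\iprod{Y}{p\otimes v}_{\mathrm F}$ (with the Frobenius pairing $\iprod{A}{B}_{\mathrm F}:=\tr{A^\top B}$) shows that $M:=\sum_i\alpha_i\,p_i\otimes v_i$ satisfies $\tr Y=\iprod{Y}{M}_{\mathrm F}$ for every admissible $Y$. Testing against $Y$ supported in the $F$-block gives $P_FMP_F=P_F$, establishing \eqref{eq:answerGLMPfixedsubspace_decomposition}; testing against $Y=\nu\,\id_{\ort F}$ on the $\ort F$-block gives $\tr{P_{\ort F}MP_{\ort F}}=d-\dim F$. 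Together these yield $\tr M=d$; since $\tr{p_i\otimes v_i}=\iprod{p_i}{v_i}=1$ for each contact pair, we conclude $\sum_i\alpha_i=d$, which completes \eqref{eq:answerGLMPfixedsubspace_trace}.

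The step I expect to be most delicate is the Farkas/closure argument: guaranteeing that the conic hull of $\{L_{(v,p)}\}$ in $T^*$ is closed, so that the trace functional $Y\mapsto\tr Y$ is not merely in its closure but is a genuine nonnegative combination of finitely many $L_{(v_i,p_i)}$. The assumption $\origin\in\inter{K}$ is essential here: it gives a uniform positive lower bound on $\norm{v}$, hence an upper bound on $\norm{p}$ via $\iprod{p}{v}=1$, which yields compactness of the contact-pair set and rules out escape-to-infinity obstructions to closedness.
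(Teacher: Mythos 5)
Your architecture is the same as the paper's: linearize at the identity over the tangent space of $F$-operators plus translations, turn optimality into infeasibility of a linear system, and dualize. The final algebraic extraction — testing $\tr{Y}=\iprod{Y}{M}_{\mathrm F}$ against $w$, against the $F$-block, and against $\id_{\ort{F}}$ — is precisely the paper's computation of $\ort{\gammaderivatives_0}$, and your closedness/compactness discussion correctly identifies why the cone of contact functionals is closed (the paper needs the same fact for $\uconttensorcr$). The only real difference is packaging: the paper separates the subspace $\gammaderivatives_0$ from the intersection of two open cones in the big space $\rotpos$, while you phrase it as a Farkas alternative on the tangent space; these are equivalent.

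The gap is the sentence ``Optimality of $K$ thus forces infeasibility of the system $\tr{Y}>0$, $\iprod{p}{Yv+w}\leq 0$.'' What you establish just before it is only the \emph{necessary} direction of the first-order containment condition ($A_tK+tw\subseteq L$ for small $t$ implies $\iprod{p}{Yv+w}\leq 0$), and that implication points the wrong way: to contradict optimality you must show that a feasible $(Y,w)$ actually \emph{produces} a volume-increasing family inside $L$. Two ingredients are missing. First, the sufficiency direction: if $\iprod{p}{Yv+w}<0$ \emph{strictly} for every contact pair, then $A_tK+tw\subseteq L$ for small $t>0$; this requires a compactness argument over all of $\bd{K}$ (non-contact boundary points have positive distance to $\bd{L}$), and it genuinely fails with mere $\leq 0$, since a contact point may move tangentially and exit $L$ at second order — this is the content of the paper's Lemmas~\ref{lem:tangent_cone_possible_directions} and \ref{lem:glmp_contact_point_along_curve} and Corollary~\ref{cor:iprodinclusion}. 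Second, optimality therefore only rules out the system with \emph{strict} constraints, whereas your Farkas step consumes infeasibility of the non-strict system; the upgrade is standard but must be said: given $(Y,w)$ with $\tr{Y}>0$ and $\iprod{p}{Yv+w}\leq 0$ for all contact pairs, replace $Y$ by $Y-\epsilon\,\id_{\Red}$ (still an admissible direction, still of positive trace for small $\epsilon$), which makes every constraint strictly negative because $\iprod{p}{v}=1$. With these two additions your proof is complete and essentially coincides with the paper's.
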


As a corollary, we will obtain the following.
\begin{thm}[Containment for \probgenfixed{}]\label{thm:GLMP_inclusion_in_F}
In addition to the assumptions of \Href{Theorem}{thm:genfixed}, suppose that 
\( P_F K \subset L \cap F \). Then there exists a translation vector 
\( z\) in the relative interior of \( P_F K \) such that
\[
P_F (K - z) \subset \parenth{L - z} \cap F \subset -d \cdot P_F (K - z).
\]
In particular, such a translation vector \( z \) exists whenever \( K \) is symmetric with respect to \( F \).
\end{thm}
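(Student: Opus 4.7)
The plan is to invoke \Href{Theorem}{thm:genfixed}, transpose its decomposition identity to obtain a reconstruction formula on $F$, and then pin down the translation vector $z$ as a rescaling of the weighted barycenter of the projected contact points.

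After extracting contact pairs $(v_i, p_i)$ and positive weights $\alpha_i$ from \Href{Theorem}{thm:genfixed} so that \eqref{eq:answerGLMPfixedsubspace_decomposition}, \eqref{eq:answerGLMPfixedsubspace_zerosum} and \eqref{eq:answerGLMPfixedsubspace_trace} hold, I would first transpose \eqref{eq:answerGLMPfixedsubspace_decomposition}: since $P_F$ is self-adjoint and $\transpose{(p \otimes v)} = v \otimes p$, one obtains $P_F\bigl(\sum_i \alpha_i v_i \otimes p_i\bigr) P_F = P_F$, so every $y \in F$ admits the representation
\[
y = \sum_{i \in [m]} \alpha_i \iprod{p_i}{y}\, P_F v_i.
\]
Because $p_i$ is an outer normal of $L$ at $v_i$ with $\iprod{p_i}{v_i} = 1$, for every $y \in L$ one has $\iprod{p_i}{y} \le 1$, and in particular $\mu_i := 1 - \iprod{p_i}{y} \ge 0$ when $y \in L \cap F$. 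Combining \eqref{eq:answerGLMPfixedsubspace_zerosum} and \eqref{eq:answerGLMPfixedsubspace_trace} yields $\sum_i \alpha_i \mu_i = d - \iprod{\sum_i \alpha_i p_i}{y} = d$, so $\lambda_i := \alpha_i \mu_i / d$ is a probability vector. A direct expansion using the representation above gives
\[
\sum_{i \in [m]} \lambda_i P_F v_i = w_0 - \frac{y}{d} \in P_F K, \qquad \text{where } w_0 := \frac{1}{d}\sum_i \alpha_i P_F v_i,
\]
since the left-hand side is a convex combination of points $P_F v_i \in P_F K$. Equivalently, $L \cap F \subset d w_0 - d \cdot P_F K$.

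At this stage I would set $z := \frac{d}{d+1}\, w_0 \in F$. Writing $z = \tfrac{1}{d+1}\origin + \tfrac{d}{d+1} w_0$ and noting that $\origin \in \mathrm{relint}(P_F K)$ (since the origin lies in $\inter K$) while $w_0 \in P_F K$, one concludes that $z$ lies in $\mathrm{relint}(P_F K)$. The identity $\frac{d+1}{d} z = w_0$ rewrites the inclusion $L \cap F \subset d w_0 - d \cdot P_F K$ as $(L \cap F) - z \subset -d \cdot (P_F K - z)$; since $z \in F$, one has $(L - z) \cap F = (L \cap F) - z$ and $P_F(K - z) = P_F K - z$, so this is exactly the desired second inclusion. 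The first inclusion $P_F(K - z) \subset (L - z) \cap F$ is immediate from the hypothesis $P_F K \subset L \cap F$ combined with $z \in F$. For the concluding ``in particular'' clause, if $K$ is symmetric with respect to $F$, then for every $k \in K$ the midpoint $P_F k = \tfrac{1}{2}(k + (2 P_F - \id) k)$ of $k$ and its reflection lies in $K$ by convexity; hence $P_F K \subset K \subset L$, so the standing hypothesis $P_F K \subset L \cap F$ holds automatically.

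The delicate point is identifying the correct $z$: the natural first guess $z = w_0$ fails because the translation simultaneously shifts $(L - z) \cap F$ and $-d \cdot P_F(K - z)$, and only the factor $\tfrac{d}{d+1}$ reconciles the two shifts. Once this rescaling is found, everything else is a careful but routine use of the three optimality identities furnished by \Href{Theorem}{thm:genfixed}.
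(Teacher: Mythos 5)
Your proof is correct and follows essentially the same route as the paper's: the reconstruction formula obtained by transposing \eqref{eq:answerGLMPfixedsubspace_decomposition}, the convex combination with weights $\alpha_i\parenth{1-\iprod{p_i}{y}}$ summing to $d$, and the center $z=\frac{1}{d+1}\sum_{i}\alpha_i P_F v_i$ all coincide with what the paper does in \Href{Lemma}{lem:inclusion_contact_polytopes} and \Href{Lemma}{lem:goodcenter}. The only difference is organizational: the paper first translates to a ``good center'' and re-derives the contact-pair identities for the translated bodies (renormalizing $p_i\mapsto p_i/(1-\iprod{z}{p_i})$ and reweighting), whereas you keep the original contact pairs throughout and translate only at the very end, which slightly streamlines the argument without changing its substance.
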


In the case $F=\Red$, \Href{Theorem}{thm:genfixed} and 
\Href{Theorem}{thm:GLMP_inclusion_in_F} were obtained in \cite{GLMP04}.

Our necessary condition of optimality for \Href{Problem}{prob:genrotation} is the following.

\begin{thm}[Condition for \probgenrotation{}]\label{thm:genrotation}
With the notation of \Href{Theorem}{thm:genfixed}, assume that $K$ contains the origin in its interior and is a solution to \probgenrotation{} for $K$, $L$ and the subspace $F$. Then  there are
contact pairs of $L$ and $K$, and weights as in \Href{Theorem}{thm:genfixed} such that 
in addition to identities  \eqref{eq:answerGLMPfixedsubspace_decomposition} -- \eqref{eq:answerGLMPfixedsubspace_trace}, we also have 
\begin{equation}
    \label{eq:answerGLMProtation}
    \sum\limits_{i \in [m]} \alpha_i p_i\otimes v_i = 
     \sum\limits_{i \in [m]} \alpha_i v_i\otimes p_i.
\end{equation}
\end{thm}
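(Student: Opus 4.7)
The plan is to adapt the proof of \Href{Theorem}{thm:genfixed} to the larger group $\rposition$ by enlarging the tangent space of admissible infinitesimal deformations to include skew-symmetric directions. Since $\aposition \subset \rposition$ (take $R = \id$), the optimum $K$ for \Href{Problem}{prob:genrotation} is \emph{a fortiori} optimal in \Href{Problem}{prob:genfixed}, so \Href{Theorem}{thm:genfixed} already produces contact pairs and weights satisfying \eqref{eq:answerGLMPfixedsubspace_decomposition}--\eqref{eq:answerGLMPfixedsubspace_trace}. What \Href{Theorem}{thm:genrotation} asserts, however, is that \emph{one single} family of contact pairs and weights can be chosen so that \eqref{eq:answerGLMProtation} holds simultaneously with the other three identities; this is obtained by redoing the perturbation/Farkas argument once in the larger setting.

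A smooth path through the identity in $\rposition$ has the form $x \mapsto R_t A_t x + z_t$ with $R_0 = A_0 = \id$ and $z_0 = \origin$. Its velocity is $(X, w)$ where $X = S + B$ with $S := \dot R_0 \in \mathfrak{so}(d)$ skew-symmetric and $B := \dot A_0$ of block form
\[
B = \begin{pmatrix} B_1 & \mathbf{0}\\ \mathbf{0} & \mu\,\id_{d-s}\end{pmatrix}
\]
in a basis adapted to $F \oplus \ort{F}$ (with $B_1 \in \Re^{s \times s}$ arbitrary and $\mu \in \Re$), and $w := \dot z_0 \in \Red$. The decomposition $X = S + B$ is not unique, but $\tr{X} = \tr{B}$ is, and the volume of $R_t A_t K + z_t$ equals $|\det A_t| \cdot \vol{K} = (1 + t\,\tr{X})\vol{K} + O(t^2)$. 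Optimality of $K$ thus gives the first-order condition: for every $(X, w) \in T_{\id}\rposition \times \Red$ such that $\iprod{p}{X v + w} \leq 0$ at every contact pair $(v, p)$ of $L$ and $K$, one has $\tr{X} \leq 0$.

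A Farkas-type lemma combined with Carath\'eodory's theorem --- the same step used in the proof of \Href{Theorem}{thm:genfixed} --- then produces positive weights $\alpha_1, \dots, \alpha_m$ and contact pairs $(v_i, p_i)$ such that
\[
\tr{X} \;=\; \sum_{i \in [m]} \alpha_i \,\iprod{p_i}{X v_i + w}
\]
holds as an identity in $(X, w) \in T_{\id}\rposition \times \Red$. Setting $M := \sum_i \alpha_i\, p_i \otimes v_i$ and using $\iprod{p_i}{X v_i} = \tr{X (v_i \otimes p_i)}$ together with $\sum_i \alpha_i\, v_i \otimes p_i = \transpose{M}$, the right-hand side rewrites as $\tr{X \transpose{M}} + \iprod{\sum_i \alpha_i p_i}{w}$.

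Separating variables recovers all four conclusions. Testing on arbitrary $w$ forces $\sum_i \alpha_i p_i = \origin$, i.e.~\eqref{eq:answerGLMPfixedsubspace_zerosum}. Testing on $B$ in the tangent space of $\aposition$ and examining its block structure yields $P_F M P_F = P_F$ (from the $B_1$-block) and $\tr{M} = d$ (from the $\mu$-block), which are \eqref{eq:answerGLMPfixedsubspace_decomposition} and \eqref{eq:answerGLMPfixedsubspace_trace}. Finally, testing on $S \in \mathfrak{so}(d)$ and using $\tr{S} = 0$ gives $\tr{S \transpose{M}} = 0$ for every skew-symmetric $S$, so $\transpose{M}$ is symmetric, i.e.~$M = \transpose{M}$; this is precisely \eqref{eq:answerGLMProtation}. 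The main technical obstacle is the infinite-dimensional Farkas/Carath\'eodory step needed to reduce the continuous family of contact pairs to a finite certifying subfamily, but this is identical to the analogous step already used for \Href{Theorem}{thm:genfixed} and is handled by the compactness of the contact-pair set together with a standard closedness/separation argument for the corresponding convex cone.
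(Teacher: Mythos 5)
Your proof is correct and follows essentially the same route as the paper: a first-order perturbation through the identity in $\rposition$, a separation/Farkas argument over the cone generated by the contact functionals, and then a block-by-block evaluation of the resulting identity (translations give \eqref{eq:answerGLMPfixedsubspace_zerosum}, $F$-operator directions give \eqref{eq:answerGLMPfixedsubspace_decomposition} and \eqref{eq:answerGLMPfixedsubspace_trace}, skew-symmetric directions give \eqref{eq:answerGLMProtation}). The paper packages the same computation as a cone intersection in the product space $\rotpos$ via \Hrefs{Lemmas}{lem:coneconditionNoImprovementRotation}{lem:ort_complement_derriv_space}, which is just the dual formulation of your Farkas identity.
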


We will state our analogous results for ellipsoids of revolution, that is for \Hrefs{Problems}{prob:ellfixed}{prob:ellrotation} in \Href{Section}{sec:ellipsoids}. Here we state other geometric properties of maximal volume ellipsoids of revolution.

We define the \emph{inradius} $\inrad A$ of a compact, convex set $A$ as the radius of a largest radius ball inside the affine hull of $A$ contained in $A$.

\begin{thm}[Properties of solutions of \probellfixed]\label{thm:ellfixedproperties}
    Let an origin-centered ellipsoid $E$ be a solution to \Href{Problem}{prob:ellfixed} for a convex body $K$ and a linear subspace $F$ in $\Red$ of dimension $s\in\{0,1,\dots,d-1\}$. 
    Then 
\begin{enumerate}[label=({\arabic*})]
    \item\label{item:inradius}
    the inradius of $K \cap \ort{F}$  satisfies
\[
 \inrad \parenth{K \cap \ort{F}} \leq \frac{d}{d-s}\cdot\mathrm{radius}(E \cap \ort{F});
\]
\item\label{item:volbound}
the following volume bound holds:
\[
    \parenth{\frac{\vol[s]{K\cap F}}{\vol[s]{E\cap F}}}^\frac{1}{s} \leq
\frac{1}{(s + 1)^{\frac{d - s}{2s(d + 1)}}}
\sqrt{ \frac{d(d + 1)}{s(s + 1)} }
    \parenth{\frac{\volnf[s]{\Delta^s_J}}{\volnf[s]{\ball{s}}}}^\frac{1}{s},
\]
where $\Delta_J^s$ stands for the $s$-dimensional regular simplex circumscribed around $\ball{s}$;
\item\label{item:inclusion}
the following inclusion holds:
\[
E \cap F \subset  K \cap F\subset d \parenth{E \cap F}.
\]
\end{enumerate}
If additionally, $K$ is centrally symmetric, then
\begin{enumerate}[label=({S\arabic*})]
    \item\label{item:inradiussym}
    the inradius of $K \cap \ort{F}$  satisfies
\[
 \inrad \parenth{K \cap \ort{F}} \leq \sqrt{\frac{d}{d-s}}\cdot\mathrm{radius}(E \cap \ort{F});
\]
\item\label{item:volboundsym}
the following volume bound holds:
\[
    \parenth{\frac{\vol[s]{K\cap F}}{\vol[s]{E\cap F}}}^\frac{1}{s} \leq
\sqrt{\frac{d}{s}}
    \parenth{\frac{\volnf[s]{\cube^s}}{\volnf[s]{\ball{s}}}}^\frac{1}{s},
\]
where $\cube^s$ stands for the standard cube $[-1,1]^s$;
\item\label{item:inclusionsym}
the following inclusion holds:
\[
E \cap F \subset  K \cap F \subset \sqrt{d} \parenth{E \cap F}.
\]
\end{enumerate}
\end{thm}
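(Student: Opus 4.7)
The plan is to invoke \Href{Theorem}{thm:genfixed} with $(\ball{d}, K)$ playing the roles of $(K, L)$ and axis $F$, after first reducing to $E = \ball{d}$ by applying an $F$-operator and a translation that preserve the ratios and inclusions in the statement. A contact pair of $K$ and $\ball{d}$ satisfies $|v_i| = |p_i| = 1 = \iprod{p_i}{v_i}$, which by equality in Cauchy--Schwarz forces $p_i = v_i$, so each contact pair is determined by a single unit vector $v_i$. Splitting $v_i = u_i + w_i$ with $u_i := P_F v_i$, $w_i := P_{\ort{F}} v_i$, the conditions \eqref{eq:answerGLMPfixedsubspace_decomposition}--\eqref{eq:answerGLMPfixedsubspace_trace} become
\[
\textstyle\sum_i \alpha_i \, u_i \otimes u_i = \id_F, \quad \sum_i \alpha_i u_i = 0, \quad \sum_i \alpha_i w_i = 0, \quad \sum_i \alpha_i = d,
\]
and taking traces gives $\sum_i \alpha_i |u_i|^2 = s$ and $\sum_i \alpha_i |w_i|^2 = d - s$, with $|u_i|, |w_i| \leq 1$.

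\textbf{Containments \ref{item:inclusion}, \ref{item:inclusionsym}.} For $x \in K \cap F$, the contact inequality $\iprod{v_i}{x} \leq 1$ reduces to $\iprod{u_i}{x} \leq 1$ since $w_i \in \ort{F}$ and $x \in F$. Setting $\beta_i := \alpha_i(1 - \iprod{u_i}{x}) \geq 0$, one checks $\sum_i \beta_i = d$ and $\sum_i \beta_i u_i = -x$ (using the matrix identity and $\sum_i \alpha_i u_i = 0$); hence $-x/d \in \conv{\{u_i\}} \subseteq \ball{s}$, giving $|x| \leq d$. If $K$ is centrally symmetric, the contact system is antipodal, so $|\iprod{u_i}{x}| \leq 1$, and then $|x|^2 = \sum_i \alpha_i \iprod{u_i}{x}^2 \leq \sum_i \alpha_i = d$.

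\textbf{Inradii \ref{item:inradius}, \ref{item:inradiussym}.} For a ball $B = c + \rho \ball{d-s} \subseteq K \cap \ort{F}$ with $c \in \ort{F}$, the contact inequality on $B$ yields $\iprod{w_i}{c} + \rho |w_i| \leq 1$. Weighting by $\alpha_i$ and summing, the first term vanishes by $\sum_i \alpha_i w_i = 0$; combined with the elementary bound $|w_i| \geq |w_i|^2$, this produces $\rho(d-s) \leq d$. For \ref{item:inradiussym}, central symmetry of $K$ (and of $\ort{F}$) implies that $\tfrac12 B + \tfrac12 (-B) = \rho \ball{d-s}$ also lies in $K \cap \ort{F}$, so we may take $c = 0$; the pointwise inequality $\rho |w_i| \leq 1$ then gives $\rho^2(d-s) \leq d$ upon weighting and summing.

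\textbf{Volume bounds \ref{item:volbound}, \ref{item:volboundsym}.} Let $C := \{x \in F : \iprod{u_i}{x} \leq 1 \text{ for all } i\}$, replaced by $\{x \in F : |\iprod{u_i}{x}| \leq 1\}$ in the symmetric case; then $K \cap F \subseteq C$, so it suffices to bound $\vol[s]{C}$. The rescaling $\tilde u_i := u_i/|u_i|$, $\tilde c_i := \alpha_i |u_i|^2$ turns $(\tilde u_i, \tilde c_i)$ into a unit-vector John-type decomposition of $\id_F$ with $\sum_i \tilde c_i = s$. I apply the Brascamp--Lieb inequality (in Ball's symmetric form for \ref{item:volboundsym}, and in its asymmetric simplex-extremal form for \ref{item:volbound}), which reduces the problem to controlling
\[
\textstyle\prod_i |u_i|^{-\tilde c_i} = \exp\bigl(-\tfrac12 \sum_i \alpha_i |u_i|^2 \log |u_i|^2\bigr).
\]
Jensen's inequality for the convex function $t \mapsto t \log t$ with probability weights $\alpha_i/d$ and mean $\sum_i (\alpha_i/d)|u_i|^2 = s/d$ bounds this quantity by $(d/s)^{s/2}$, which is exactly the factor $\sqrt{d/s}$ in \ref{item:volboundsym}. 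The main obstacle is \ref{item:volbound}: the asymmetric Brascamp--Lieb bound must be instantiated with non-symmetric intervals adapted to the regular simplex, and the interplay between the rescaling and $\sum_i \alpha_i = d$ (rather than $s$) must be tracked carefully to recover the precise constant $(s+1)^{-(d-s)/(2s(d+1))}\sqrt{d(d+1)/(s(s+1))}$.
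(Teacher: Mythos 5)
Your approach is correct and essentially the same as the paper's for parts \ref{item:inradius}, \ref{item:inclusion}, \ref{item:inradiussym}, \ref{item:volboundsym}, \ref{item:inclusionsym}. In particular: your direct computation for \ref{item:inclusion} is exactly the proof of the paper's Lemma~\ref{lem:inclusion_contact_polytopes} with the good center $z=\origin$ already in place (since $\sum\alpha_i v_i=\origin$ comes for free when $p_i=v_i$); your symmetric inradius argument weights and sums the pointwise bounds $\rho^2\enorm{w_i}^2\leq 1$ where the paper instead picks a single index $j$ with $\enorm{w_j}^2\geq(d-s)/d$ --- both give the same constant; and your Jensen computation for \ref{item:volboundsym} matches the paper's Lemma~\ref{lem:vr_sym_fixed_subsp_John}.

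The genuine gap is part \ref{item:volbound}, which you explicitly leave incomplete. Moreover, the direction you gesture at --- applying Brascamp--Lieb in $F$ with ``non-symmetric intervals adapted to the regular simplex'' --- is not how one obtains the simplex constant, and tracking constants along that route will not produce the factor $(s+1)^{-(d-s)/(2s(d+1))}\sqrt{d(d+1)/(s(s+1))}$. The paper instead uses Ball's lifting device: one passes to $\widetilde F=F\times\Re$, replaces the projected contact vectors by lifted vectors $\widetilde q_i$ with a $(d'+1)$-normalization (where $d'=\sum_{i\in I}\alpha_i$), and takes one-sided exponential test functions $t\mapsto e^{-t/\enorm{\widetilde q_i}}\indicator{[0,\infty)}$ in Brascamp--Lieb on $\widetilde F$, not indicator functions of intervals in $F$. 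The left-hand side is then an explicit conical integral over $\poscone{(\frac{K_{\mathrm J}\cap F}{\sqrt{d'}},1)}$, and the right-hand side is a product of norms of the lifted vectors. The resulting product is not bounded by a one-line Jensen argument: because the ``weights'' $\widetilde\alpha_i$ sum to $d'+1$ while the relevant constraint lives in dimension $s+1$, one needs the two-variable majorization argument of Lemma~\ref{lem:vr_convex_func_max} (weighted Karamata inequality), proved in \Href{Appendix}{sec:major_lemma}, to obtain the exact exponent. Without the lift and without the majorization lemma, the argument does not close.
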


Note that \ref{item:inclusion} and \ref{item:inclusionsym} are sharp: $\ball{d}$ is the John ellipsoid of $\Delta^d$ (resp., of $\cube^d$), and hence, it is the solution of \probellfixed{} for any $F$. If we now choose $F$ properly, then we have equality in these two inequalities.

On the other hand, \ref{item:volbound} and \ref{item:volboundsym} are identical to the best bounds that are known for sections of $K$ and its (unrestricted) John ellipsoid. See our discussion of optimality in \Href{Subsection}{sec:volratiotight}.

There is hardly any difference between the proofs of \ref{item:volbound}, \ref{item:inclusion} (and \ref{item:volboundsym}, \ref{item:inclusionsym}) and the corresponding statements for John ellipsoids. Everything reduces to the algebraic conditions on the projections of the contact vectors onto $F$. The proof of \ref{item:inradius} (and \ref{item:inradiussym}) however, is more intricate.

We prove \Hrefs{Theorems}{thm:genfixed}{thm:genrotation} in \Hrefs{Sections}{sec:firstorder}{sec:genproof}. In \Href{Subsection}{sec:locmaxrotationsufficient}, we state a certain converse of \Hrefs{Theorems}{thm:genfixed}{thm:genrotation}, that is, that the equations in this theorem ensure that $K$ is of \emph{locally} maximum volume among certain affine images of $K$ under some additional conditions.

In \Href{Section}{sec:containment}, we consider the problem of finding a homothety ratio such that enlarging the inner body in some sort of John position by this ratio, it will contain the outer body. In particular, we prove \ref{item:inclusion} in \Href{Theorem}{thm:ellfixedproperties}, and also show that no similar containment holds in the whole space $\Red$. 

We may refer to the notion of an isosceles triangle as a `semi-Euclidean' notion, since it is neither an affine notion nor purely Euclidean -- that is, a notion invariant under similarity alone.  In \Href{Subsection}{sec:rightcones}, we show an application of our results to isosceles triangles, and more generally, to right cones to illustrate how our language is capable of addressing semi-Euclidean questions. 

In \Href{Section}{sec:ellipsoids}, we discuss the problem of finding largest volume ellipsoids of revolution in a convex body, and in particular, we prove \Href{Theorem}{thm:ellfixedproperties}. We discuss the uniqueness of the solution to these John type problems in \Href{Section}{sec:uniqueness}, and finally, in \Href{Section}{sec:lowner}, we outline how some of the results change for the dual question (often referred to as Löwner's ellipsoid), where the smallest volume ellipsoid containing a convex body is considered. 
In \Href{Appendix}{sec:tightness_inrad_bound}, we show that \ref{item:inradius} of \Href{Theorem}{thm:ellfixedproperties} is tight. \Href{Appendix}{sec:major_lemma} contains the proof of a technical fact that we use in \Href{Section}{sec:ellipsoids}.

\subsection{Notation}
We denote by $\origin$ the origin of a vector space, and sometimes in a subscript, we specify the space, e.g. $\originmsp{\mathcal{L}}$ in a space $\mathcal{L}$ of matrices.
We use $\iprod{u}{v}$ to denote the standard \emph{inner product} of the vectors $u,v\in\Red$, and 
$\ball{d}$ for the closed Euclidean unit ball in $\Red$ centered at the origin.
We denote the \emph{positive hull} of a set $S$ in $\ReN$ by 
$\pos{S}=\set{\lambda x\st \lambda\geq 0, x\in S}.$
The \emph{polar} of a set $S$ is defined as
$S\pol=\set{p\in\ReN\st \iprod{x}{p} \leq 1\, \text{ for all } x\in S}$.
Note that for a positive cone $C$ (that is, the positive hull of some set), 
we have
$C\pol=\set{p\in\ReN\st \iprod{x}{p} \leq 0\, \text{ for all } x\in C}$.
The group of orthogonal transformations on $\Red$ is $O(d)$.

Recall that the \emph{polar decomposition} of a square matrix $A\in\Re^{d\times d}$ is a factorization of $A$ in the form  $A = R M,$ where $R$ is an orthogonal matrix and $M$ is positive semi-definite. Moreover, if $A$ is invertible, then this representation is unique ($M = \sqrt{\transpose{A}A}$ and $U = A \inv{M}$), where $\transpose{A}$ denotes the transpose of $A$.

\section{First-order necessary conditions}\label{sec:firstorder}

In this section, we reformulate \Hrefs{Theorems}{thm:genfixed}{thm:genrotation} in terms of the (non-)existence of a certain curve in a vector space. The difference between the proofs of the two theorems lies in the choice of the vector space in which the curve (non-)exists. This condition is then translated into a statement about the separation of a point and a convex cone in the corresponding space. This is a standard technique in proving John-type theorems; cf. \cite{ball1997elementary}.

First, we recall basic facts describing convex sets locally, and next, we introduce the vector space which parametrizes local affine transformations. In the final part of this section, we rephrase the existence of a curve in this vector space along which an `improved' position of $K$ in $L$ can be found in terms of separation of cones.

\subsection{The normal and the tangent cone}

\begin{dfn} Let $C$ be a convex body in $\ReN$ and $a\in\bd{C}$ be a boundary point.
The \emph{normal cone} 
$\ncone{K}{a}$ is the set consisting of the origin and all the outward normal vectors to $C$ at $a$. That is,
\[
\ncone{C}{a} = \left\{p\in\ReN \st \iprod{p}{z - a} \leq 0  \quad \forall z \in C \right\}.
\]
The \emph{tangent cone} $\tancone{C}{a}$ at $a$ is the closure of the cone  $\poscone{\parenth{C-a}}$.
\end{dfn}

The following facts are well-known, and follow from standard separation properties of convex sets \cite[Section 5.2]{hiriart2004fundamentals}:
\begin{lem}\label{lem:tangent_normal_cones_duality}
Consider a convex set $C$  in $\ReN$ and a point $a\in\cl{C}$. Then
\[
{\tancone{C}{a}} = \polarset{\parenth{\ncone{C}{a}}} 
\quad \text{and} \quad
\polarset{\parenth{\tancone{C}{a}}} = {\ncone{C}{a}}. 
\]
In particular, $h \in \inter \tancone{C}{a}$ if and only if
\[
 \iprod{h}{p} < 0
\] for all non-zero $p \in \ncone{C}{a}$.
\end{lem}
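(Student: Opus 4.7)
The plan is to establish the two polar duality identities from the definitions and the bipolar theorem, and then extract the strict-inequality characterization of the interior as a standard consequence.

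First, I would verify $\ncone{C}{a} = \polarset{\parenth{\tancone{C}{a}}}$ by unwinding definitions. A vector $p$ belongs to $\ncone{C}{a}$ if and only if $\iprod{p}{z-a}\leq 0$ for every $z\in C$. Since this inequality is preserved under non-negative scaling of the second argument, it is equivalent to $\iprod{p}{h} \leq 0$ for every $h \in \pos{C-a}$. As this is a closed condition on $h$, it passes to the closure, so it is equivalent to $\iprod{p}{h}\leq 0$ for every $h\in\tancone{C}{a}=\cl{\pos{C-a}}$. By the convention stated in the notation section that for a positive cone the polar is defined via the inequality $\leq 0$, this is exactly $p \in \polarset{\parenth{\tancone{C}{a}}}$.

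For the reverse identity $\polarset{\parenth{\ncone{C}{a}}} = \tancone{C}{a}$, I would apply the bipolar theorem for closed convex cones. The tangent cone $\tancone{C}{a}$ is a closed convex cone by construction (the closure of the convex cone $\pos{C-a}$), so $\polarset{\parenth{\polarset{\parenth{\tancone{C}{a}}}}} = \tancone{C}{a}$; substituting the first identity into the inner polar yields the claim.

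For the last assertion, I would invoke the general fact that for a closed convex cone $K\subset\ReN$, a vector $h$ lies in $\inter K$ if and only if $\iprod{h}{p}<0$ for every nonzero $p\in \polarset{K}$. If $\iprod{h}{p}=0$ for some nonzero $p\in \polarset{K}$, then the closed halfspace $\{x : \iprod{x}{p}\leq 0\}$ contains $K$ and has $h$ on its bounding hyperplane, so no neighborhood of $h$ is contained in $K$. Conversely, if $\iprod{h}{p}<0$ for every nonzero $p\in \polarset{K}$, then by compactness of $\polarset{K}\cap \Se^{n-1}$ and continuity of the inner product, there exists $\epsilon>0$ with $\iprod{h}{p}\leq -\epsilon$ on that set, and the same strict inequality persists on a small ball around $h$, placing that ball inside $\polarset{\parenth{\polarset{K}}}=K$. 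Applying this to $K=\tancone{C}{a}$ and using the duality already established, the stated characterization follows. No serious obstacle arises; the lemma is a standard packaging of the bipolar theorem with a short compactness argument, which is why the authors simply cite \cite{hiriart2004fundamentals}.
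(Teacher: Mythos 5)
Your proof is correct. The paper itself offers no argument for this lemma --- it is stated as well-known with a citation to \cite{hiriart2004fundamentals} --- and what you have written is precisely the standard proof one finds there: the identity $\polarset{\parenth{\tancone{C}{a}}}=\ncone{C}{a}$ by unwinding definitions and passing to the closure, the reverse identity via the bipolar theorem for the closed convex cone $\tancone{C}{a}$, and the interior characterization via compactness of $\polarset{K}\cap\Se^{n-1}$ together with $\polarset{\parenth{\polarset{K}}}=K$. All steps, including the handling of the degenerate case $\polarset{K}=\oset$ (vacuous condition, $K=\ReN$), check out.
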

\begin{lem}\label{lem:tangent_cone_possible_directions}
Let $C$ be a convex body in $\ReN$, and $a \in C.$ Let $\gamma_t$ where $  t \in [0,1]$ be a curve in $\ReN$
with $\gamma_0 = a$ and 
\[\diffzero \gamma_t = h.\]
 Consider the following assertions:
\begin{enumerate}[label={(\roman*)}]
\item\label{item:intercone} 
    $h \in \inter \tancone{C}{a};$
\item\label{item:gammainbody} 
    $\gamma_t \in C$ for all $t \in [0, \tau]$ for some $\tau > 0;$
\item\label{item:incone}
    $h \in  \tancone{C}{a}$.
\end{enumerate}
Then \ref{item:intercone} implies \ref{item:gammainbody}, and \ref{item:gammainbody} implies \ref{item:incone}.
\end{lem}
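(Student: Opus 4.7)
The plan is to prove the two implications separately; the implication (ii) $\Rightarrow$ (iii) is a brief closure argument, while (i) $\Rightarrow$ (ii) requires locating a small inscribed ball of $C$ along the direction $h$ and then controlling the error term of $\gamma_t$.

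For (ii) $\Rightarrow$ (iii), I would observe that for every $t \in (0,\tau]$, the difference quotient $(\gamma_t - a)/t$ lies in $\pos{C-a} \subset \tancone{C}{a}$. Since $\tancone{C}{a}$ is closed by definition, passing to the limit $t \to 0^+$ gives $h = \diffzero \gamma_t \in \tancone{C}{a}$.

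For (i) $\Rightarrow$ (ii), the key step is to convert the hypothesis $h \in \inter{\tancone{C}{a}}$ into the existence of scalars $\lambda, \epsilon > 0$ with $a + \lambda h + \epsilon\ball{n} \subset C$. Since a convex set shares its interior with its closure and $\tancone{C}{a} = \cl{\pos{C-a}}$, the hypothesis provides a $\delta > 0$ with $h + \delta \ball{n} \subset \pos{C-a}$. Choosing $n+1$ affinely independent points $u_i$ in $h + \delta \ball{n}$ whose convex hull contains a neighborhood of $h$, each may be written as $u_i = \mu_i (c_i - a)$ with $\mu_i > 0$, $c_i \in C$. Setting $\lambda = 1/\max_i \mu_i$, each $a + \lambda u_i = (1 - \lambda\mu_i) a + \lambda\mu_i c_i$ is a convex combination of points of $C$, so by convexity the simplex $\conv{\{a + \lambda u_i\}}$ lies in $C$ and contains a ball $a + \lambda h + \epsilon \ball{n}$ for some $\epsilon > 0$.

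Once this ball is in hand, convexity of $C$ yields for every $t \in [0,\lambda]$ that
\[
a + t h + (t\epsilon/\lambda)\ball{n} = \parenth{1 - t/\lambda} a + (t/\lambda)\parenth{a + \lambda h + \epsilon \ball{n}} \subset C.
\]
Finally, the assumption $\diffzero \gamma_t = h$ gives $\gamma_t = a + t h + o(t)$ as $t \to 0^+$, so for $\tau \in (0, \lambda]$ small enough one has $\norm{\gamma_t - a - t h} < t\epsilon/\lambda$ throughout $(0,\tau]$, placing $\gamma_t$ inside one of the above balls and hence in $C$. The main obstacle I anticipate is the construction of the inscribed ball $a + \lambda h + \epsilon\ball{n} \subset C$ from the abstract condition $h \in \inter{\tancone{C}{a}}$; once secured, the remainder is elementary convexity combined with the definition of the right derivative.
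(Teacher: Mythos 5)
Your proof is correct. The paper offers no proof of this lemma at all: it is stated as a well-known consequence of standard convexity facts, with a pointer to \cite[Section 5.2]{hiriart2004fundamentals}. Your argument is exactly the standard textbook one: for (ii)$\Rightarrow$(iii) the difference quotients $(\gamma_t-a)/t$ lie in $\pos{C-a}$ and the tangent cone is closed; for (i)$\Rightarrow$(ii) you use that $\pos{C-a}$ is convex with nonempty interior (as $C$ is a body), so $\inter\tancone{C}{a}=\inter\pos{C-a}$, build the inscribed ball $a+\lambda h+\epsilon\ball{n}\subset C$ via a simplex of points of the form $a+\lambda\mu_i(c_i-a)$, shrink it conically toward $a$, and absorb the $o(t)$ error of $\gamma_t$. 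All steps are sound, so this correctly fills in the citation.
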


\subsection{Operator spaces}

We use $\sqmatrix{d}$ and $\Symmatrix{d}$ to denote the space of operators and the space of self-adjoint operators from $\Red$ to $\Red$, respectively.
The \emph{inner product} on $\sqmatrix{d}$ is defined as $\iprod{A}{B}_{\sqmatrix{d}}=\tr{A\transpose{B}}$.

Define
\[
\rotpos = \braces{ (M ,S, g) \st M, S \in \sqmatrix{d},\ g \in \Red }.
\]
We endow 
$\rotpos$ with an inner product as follows:
\[
 \iprodsp{(M_1,S_1, g_1)}{(M_2,S_2, g_2)}
 =
\iprod{M_1}{M_2}_{\sqmatrix{d}} +
\iprod{S_1}{S_2}_{\sqmatrix{d}}
+\iprod{g_1}{g_2}.
\]

\subsection{A curve in \texorpdfstring{$\rotpos$}{R}}\label{subsec:curve_path}

First, we describe certain one parameter families of affine transformations of $\Red$. More specifically, we consider a differentiable path $\gamma: [0,t_0)\longrightarrow\rotpos$ for some $t_0>0$ defined by
 \begin{equation}
 \label{eq:path_in_m_space}
\gamma_t=(\id_{\Red} + tH, R_t, h_t),
 \end{equation}
where $h_0=\origin$, $R_t\in O(d)$ and $R_0=\id_{\Red}$.

It is well-known \cite[Proposition 3.24.]{hall2013lie} that by setting
\begin{equation}\label{eq:derivativeOfO}
W=\diffzero  R_t,
\end{equation}
we have that $W$ is a \emph{skew-symmetric} matrix, that is, $W=-\transpose{W}$, and conversely, for every skew-symmetric matrix $W$, there is a path $R_t$ in the orthogonal group satisfying \eqref{eq:derivativeOfO}.

Denoting $\diffzero h_t$ by $z$, we have
\begin{equation}\label{eq:derivativeOfgamma}
\diffzero \gamma_t=(H,W,z).
\end{equation}

For any $t$, we will regard $\gamma_t$ as a representation of the affine transformation
\begin{equation}\label{eq:atdef}
    A_t = R_t(\id_{\Red}+tH)+h_t
\end{equation}
on $\Red$. 

Throughout the section, $R_t, A_t, h_t, H, W, z$ denote the quantities defined so far, all derived from a differentiable path $\gamma_t$ described in \eqref{eq:path_in_m_space}.

Let $K$ and $L$ be convex bodies in $\Red$ with $K \subset L$. 
We are interested in two questions: first, how does the volume of $K$ change when we replace $K$ by $A_t(K)$? Second, under what conditions is $A_t(K)$ contained in $L$?

The first question is easy. We differentiate the determinant of the linear operator part of $A_t$:
\begin{equation}\label{eq:derivativeistrace}
  \diffzero\det (R_t(\id_{\Red}+tH))=  \trace{H} =
\end{equation}
\[  
  \iprodsp{\parenth{\id_{\Red}, \originmsp{\sqmatrix{d}}, \origin}}{\parenth{H, W, z}}=
\iprodsp{\parenth{\id_{\Red}, \originmsp{\sqmatrix{d}}, \origin}}{\diffzero \gamma_t}.
\]

Setting
\[
\conerotpos = \poscone{\parenth{\id_{\Red}, \originmsp{\sqmatrix{d}},\origin}},
\]
which is a ray in $\rotpos$, we conclude the following.
\begin{lem}\label{lem:volume_along_curve}
    If 
    \[
     \parenth{H, W, z} \in  - \inter\parenth{\polarset{\parenth{\conerotpos}}},
    \]
    then the volume of $A_t(K)$ is strictly increasing for sufficiently small $t>0$.
\end{lem}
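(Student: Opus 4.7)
The plan is to unpack the condition $(H,W,z) \in -\inter\parenth{\polarset{\conerotpos}}$ in concrete terms, and then read off the conclusion from the already-established differential identity \eqref{eq:derivativeistrace}.

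First I would translate the hypothesis. Since $\conerotpos = \poscone{(\id_{\Red}, \originmsp{\sqmatrix{d}}, \origin)}$ is a single ray in $\rotpos$, its polar is the half-space
\[
\polarset{\conerotpos} = \braces{(M,S,g)\in\rotpos \st \iprodsp{(\id_{\Red}, \originmsp{\sqmatrix{d}}, \origin)}{(M,S,g)} \leq 0} = \braces{(M,S,g)\st \tr{M}\leq 0},
\]
using the definition of the inner product on $\rotpos$. Consequently $-\inter\parenth{\polarset{\conerotpos}}=\braces{(M,S,g)\st \tr{M}>0}$, so the hypothesis on $(H,W,z)$ is simply $\tr{H}>0$.

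Next I would evaluate the volume. The map $A_t$ is affine with linear part $R_t(\id_{\Red}+tH)$, so $\vol{A_t(K)}=\enorm{\det\bigl(R_t(\id_{\Red}+tH)\bigr)}\cdot\vol{K}$. Because $R_t$ is a continuous path in $O(d)$ starting at $R_0=\id_{\Red}$, we have $\det R_t = +1$ for all $t$; and because $\det(\id_{\Red}+tH)=1+t\tr{H}+O(t^2)>0$ for small $t$, the absolute value can be dropped and
\[
\vol{A_t(K)} = \det\bigl(R_t(\id_{\Red}+tH)\bigr)\cdot\vol{K}
\]
for $t$ in a small neighborhood of $0$. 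Applying \eqref{eq:derivativeistrace}, we obtain
\[
\diffzero \vol{A_t(K)} = \vol{K}\cdot \tr{H} > 0.
\]
A positive right derivative at $t=0$ forces $t\mapsto \vol{A_t(K)}$ to be strictly increasing on a right neighborhood of $0$, which is the claim.

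There is essentially no obstacle: everything was packaged into \eqref{eq:derivativeistrace} in the paragraphs preceding the lemma, and the only small care needed is the sign of $\det R_t$, handled by continuity from $R_0=\id_{\Red}$. The lemma is really a bookkeeping statement recording what the differential identity \eqref{eq:derivativeistrace} means once rephrased in the language of separation by the cone $\conerotpos$, in a form convenient for the subsequent John-type arguments.
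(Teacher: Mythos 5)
Your proof is correct and is exactly the argument the paper intends: the lemma is presented as an immediate consequence of \eqref{eq:derivativeistrace}, and you have simply filled in the routine details (the polar of the ray $\conerotpos$ is the half-space $\trace{M}\leq 0$, so the hypothesis amounts to $\trace{H}>0$; $\det R_t\equiv 1$ by continuity from $R_0=\id_{\Red}$; and $\det(\id_{\Red}+tH)>0$ for small $t$, so the absolute value may be dropped). The one imprecision is your closing appeal to ``a positive right derivative at $t=0$ forces strict increase on a right neighborhood,'' which is false for general differentiable functions, but it is harmless here because you have already identified $\vol{A_t(K)}$ with the polynomial $\det(\id_{\Red}+tH)\,\vol{K}$, whose derivative is continuous and hence positive throughout a right neighborhood of $0$.
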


Second, \Hrefs{Lemmas}{lem:tangent_normal_cones_duality}{lem:tangent_cone_possible_directions} provide a nice tool for characterization of local inclusions along curves.

\begin{lem}\label{lem:glmp_contact_point_along_curve}Let $L$ be a convex body in $\Red$ containing the origin in its interior, and let $v$ be a boundary point of $L$. Define the perturbation $v_t$ of $v$ by 
\[
v_t = A_t(v).
\]
Consider the following statements:
\begin{enumerate}
    \item\label{item:curve_admissible_linearized_in_primal} 
    \[
\iprodsp{ (H,W, z) }{ ( p \otimes v,   p \otimes v, p) } 
< 0;
    \]
    for all $p \in \polarset{L}$ with $\iprod{ p }{ v } = 1$.
    \item\label{item:curve_gammaadmissible}  	
    $v_t \in L$ for all $t \in (0, \tau]$ and some $\tau > 0$.
    \item\label{item:curve_admissible_linearized_in_primalweak} 
    \[
\iprodsp{ (H,W, z) }{ ( p \otimes v,   p \otimes v, p) }
  \leq 0;
    \]
    for all $p \in \polarset{L}$ with $\iprod{ p }{ v } = 1$.
\end{enumerate}
Then \eqref{item:curve_admissible_linearized_in_primal} implies \eqref{item:curve_gammaadmissible}, and \eqref{item:curve_gammaadmissible} implies \eqref{item:curve_admissible_linearized_in_primalweak}.
\end{lem}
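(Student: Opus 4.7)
The goal is to translate the abstract condition on $(H,W,z)$ into a condition on the tangent direction $\dot v_0 := \diffzero v_t$, and then invoke \Hrefs{Lemmas}{lem:tangent_normal_cones_duality}{lem:tangent_cone_possible_directions} applied to $C=L$, $a=v$, and $\gamma_t = v_t$.

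\textbf{Step 1 (derivative of the curve).} Using $R_0=\id_{\Red}$, $h_0=\origin$, and the product rule, I would compute
\[
\diffzero v_t
=\diffzero\bigl[R_t(v+tHv)+h_t\bigr]
=Wv+Hv+z.
\]

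\textbf{Step 2 (rewriting the inner product).} The key algebraic identity I would verify is that for every $p\in\Red$,
\[
\iprodsp{(H,W,z)}{(p\otimes v,\,p\otimes v,\,p)}=\iprod{p}{Hv+Wv+z}=\iprod{p}{\diffzero v_t}.
\]
This follows from the fact that the matrix of $p\otimes v$ is $p\transpose{v}$, so $\iprod{M}{p\otimes v}_{\sqmatrix{d}}=\tr{M\transpose{(p\otimes v)}}=\tr{Mv\transpose{p}}=\iprod{p}{Mv}$ for any $M\in\sqmatrix{d}$, applied to $M=H$ and $M=W$, together with the last coordinate giving $\iprod{p}{z}$.

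\textbf{Step 3 (normal cone description).} I would check that the set
\[
\set{p\in\polarset{L}\st \iprod{p}{v}=1}
\]
is precisely the set of nonzero elements of $\ncone{L}{v}$ up to positive scaling. The inclusion $\set{p\in\polarset{L}\st \iprod{p}{v}=1}\subset \ncone{L}{v}$ is the definition of polarity combined with $\iprod{p}{z-v}=\iprod{p}{z}-1\leq 0$ for all $z\in L$. Conversely, if $q\in\ncone{L}{v}\setminus\set{\origin}$, then since $\origin\in\inter L$, a small ball around $\origin$ lies in $L$, which forces $\iprod{q}{v}>0$; then $p:=q/\iprod{q}{v}$ lies in $\polarset{L}$ with $\iprod{p}{v}=1$. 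Consequently, condition \eqref{item:curve_admissible_linearized_in_primal} is equivalent to $\iprod{p}{\diffzero v_t}<0$ for every nonzero $p\in\ncone{L}{v}$, and condition \eqref{item:curve_admissible_linearized_in_primalweak} is equivalent to the analogous non-strict inequality.

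\textbf{Step 4 (applying the tangent/normal cone lemmas).} By \Href{Lemma}{lem:tangent_normal_cones_duality}, the strict version of Step~3 is exactly $\diffzero v_t\in\inter\tancone{L}{v}$, which by the implication \ref{item:intercone}$\Rightarrow$\ref{item:gammainbody} of \Href{Lemma}{lem:tangent_cone_possible_directions} (applied to the curve $\gamma_t=v_t$ in $C=L$) yields \eqref{item:curve_gammaadmissible}. For the second implication, if $v_t\in L$ for all small $t>0$, then the implication \ref{item:gammainbody}$\Rightarrow$\ref{item:incone} of \Href{Lemma}{lem:tangent_cone_possible_directions} gives $\diffzero v_t\in\tancone{L}{v}$, and by \Href{Lemma}{lem:tangent_normal_cones_duality} this is equivalent to $\iprod{p}{\diffzero v_t}\leq 0$ for all $p\in\ncone{L}{v}$, i.e.\ \eqref{item:curve_admissible_linearized_in_primalweak}.

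The argument is essentially a bookkeeping exercise: once the computation in Step~2 is carried out, everything else is a direct application of standard facts. The only mild subtlety I foresee is the correct matching of the three components of $\rotpos$ with the three summands of $\diffzero v_t$, and the observation in Step~3 that $\iprod{q}{v}>0$ for nonzero $q\in\ncone{L}{v}$, which relies crucially on the assumption $\origin\in\inter L$.
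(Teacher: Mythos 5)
Your proof is correct and follows essentially the same route as the paper: compute $\diffzero v_t=(H+W)v+z$, identify the pairing $\iprodsp{(H,W,z)}{(p\otimes v,p\otimes v,p)}$ with $\iprod{p}{\diffzero v_t}$, and conclude via \Hrefs{Lemmas}{lem:tangent_normal_cones_duality}{lem:tangent_cone_possible_directions}. Your Step~3, matching $\set{p\in\polarset{L}\st\iprod{p}{v}=1}$ with the normalized nonzero elements of $\ncone{L}{v}$ using $\origin\in\inter L$, is a detail the paper leaves implicit but is correctly and usefully spelled out.
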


\begin{proof}
By direct computation,
\[
\diffzero v_t = \parenth{H + W}  v + z.
\]
Thus, for any $p \in \Red$, we have
\[
\iprod{ \diffzero v_t  }{ p } = \iprod{ \parenth{H + W }  v + z }{ p } = \iprod{ H   v }{ p } + \iprod{ W  v }{ p } + \iprod{ z }{ p } =\]
\[
\iprodsp{ (H,W, z) }{ ( p \otimes v,   p \otimes v, p) }.
\]


The result follows from the descriptions of the tangent cone in \Href{Lemma}{lem:tangent_normal_cones_duality} and \Href{Lemma}{lem:tangent_cone_possible_directions}. 
\end{proof}

We denote the set of \emph{contact points} of $K$ and $L$ by
\[
\contact=\bd{K}\cap\bd{L},
\]
and, from this point onward, assume that $K$ contains the origin in its interior.

\begin{cor}\label{cor:iprodinclusion}
 If 
  \begin{equation}\label{eq:inwardconditionIprod}
\iprodsp{ (H,W, z) }
{ ( p \otimes v,   p \otimes v, p) }
<0
  \end{equation}
  for all $v \in \contact$ and  for all $p \in \polarset{L}$ with $\iprod{ p }{ v } = 1$,
 then for a sufficiently small $\epsilon>0$, the set $A_t K + h_t$ is contained in $\inter{L}$ for all $t\in(0,\epsilon)$.
\end{cor}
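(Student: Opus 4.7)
The plan is to lift the pointwise statement of \Href{Lemma}{lem:glmp_contact_point_along_curve} to a statement uniform over all of $K$. Since $L$ contains the origin in its interior, $\polarset{L}$ is a compact convex body and $y\in\inter L$ exactly when $\iprod{p}{y}<1$ for every nonzero $p\in\polarset{L}$. It therefore suffices to exhibit $\epsilon>0$ such that the continuous function
\[
f(x,p,t) := \iprod{p}{A_t(x)} - 1
\]
is strictly negative on $K\times(\polarset{L}\setminus\{\origin\})\times(0,\epsilon)$.

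At $t=0$ one has $f(x,p,0)=\iprod{p}{x}-1\leq 0$, with equality exactly on the compact set
\[
\mathcal{E}:=\braces{(x,p)\in K\times\polarset{L}\st \iprod{p}{x}=1}.
\]
Each $(x,p)\in\mathcal{E}$ is a contact pair of $K$ and $L$, because $\iprod{p}{x}=1$ together with $K\subseteq L$ forces $x\in\bd K\cap\bd L$ and forces $p$ to be an outer normal to both $K$ and $L$ at $x$. The same calculation as in the proof of \Href{Lemma}{lem:glmp_contact_point_along_curve} yields
\[
g(x,p):=\diffzero f(x,p,t)=\iprodsp{(H,W,z)}{(p\otimes x,\,p\otimes x,\,p)},
\]
which by hypothesis \eqref{eq:inwardconditionIprod} is strictly negative on $\mathcal{E}$.

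From here I would push the strict inequality outward by a two-step compactness argument. Continuity of $g$ and strict negativity on the compact set $\mathcal{E}$ give $\delta>0$ with $g\leq -2\delta$ on $\mathcal{E}$; continuity of $g$ and of $(x,p)\mapsto\iprod{p}{x}$ on $K\times\polarset{L}$ then yields $\eta\in(0,1)$ such that $g\leq -\delta$ on the open neighborhood
\[
U_\eta:=\braces{(x,p)\in K\times\polarset{L}\st \iprod{p}{x}>1-\eta}
\]
of $\mathcal{E}$. For $(x,p)\in U_\eta$, the first-order Taylor expansion $f(x,p,t)=f(x,p,0)+t\,g(x,p)+o(t)$---with the $o(t)$ uniform in $(x,p)$ by $C^1$-regularity of $\gamma_t$ and compactness of $K\times\polarset{L}$---yields $f(x,p,t)\leq -\delta t/2<0$ for all sufficiently small $t>0$. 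For $(x,p)\notin U_\eta$, the bound $f(x,p,0)\leq -\eta$ combined with uniform continuity of $f$ on $K\times\polarset{L}\times[0,t_0/2]$ gives $f(x,p,t)\leq -\eta/2<0$ for small $t$. Taking the smaller of the two thresholds supplies the required $\epsilon$.

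The principal obstacle is the uniformization itself: the hypothesis supplies only pointwise strict inequalities at contact pairs, while the conclusion demands a simultaneous bound valid for all $x\in K$ and all $p\in\polarset{L}$. The two-regime decomposition around the equality set $\mathcal{E}$ is the natural device for handling this, and once the uniform bound is in place the containment $A_t(K)\subset\inter L$ follows at once.
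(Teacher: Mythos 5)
Your proof is correct. It replaces the paper's one‑line appeal (``by Lemma~\ref{lem:glmp_contact_point_along_curve}\dots\ a standard compactness argument completes the proof'') with a self‑contained uniformization, and the details check out: $\mathcal{E}$ is closed in the compact set $K\times\polarset{L}$ (compact since $\origin\in\inter K\subseteq\inter L$), every $(x,p)\in\mathcal{E}$ has $p\neq\origin$ and falls under the hypothesis \eqref{eq:inwardconditionIprod}, and the two‑regime split (a neighborhood $U_\eta$ of $\mathcal{E}$ where the $t$‑derivative is uniformly negative, versus the complement where $f(\cdot,\cdot,0)$ is uniformly below $-\eta$) is exactly the right way to turn pointwise strictness at contact pairs into a single $\epsilon$. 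Your remainder estimate holds because $A_t$ is affine in $x$ and $h_t$, $R_t$ are fixed curves differentiable at $0$; the $o(t)$ is then controlled by $\sup_{x\in K}\enorm{x}\cdot\sup_{p\in\polarset{L}}\enorm{p}$ times the scalar remainders in $R_t,h_t$, so the appeal to ``$C^1$‑regularity'' is slightly stronger than needed but harmless. Compared with the paper, which applies Lemma~\ref{lem:glmp_contact_point_along_curve} pointwise at each $v\in\contact$ and then compactifies, your argument is a genuinely more transparent route: the naive reading of the paper (take $\inf_{v}\epsilon_v$ over the compact set $\contact$) is delicate because $\epsilon_v$ from that lemma has no a priori lower semicontinuity in $v$, whereas working with $f$ and $g$ on $K\times\polarset{L}$ makes the uniformity manifest. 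Both proofs also implicitly use the convexity reduction that $A_t(K)\subset\inter L$ need only be checked on $\bd K$ (equivalently, by checking all $(x,p)$ pairs, as you do), and your version makes that explicit.
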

\begin{proof}
By \Href{Lemma}{lem:glmp_contact_point_along_curve},
for any $v \in\contact$,  $A_t(v)$ is in $\inter{K}$ for all $t\in(0,\epsilon_v)$  with some $\epsilon_v > 0$ depending on $v$. A standard compactness argument completes the proof.
\end{proof}


Define the set of \emph{contact operators} by
\[
\ucontopcr= \braces{
 \parenth{ p \otimes v,  p \otimes v, p } \st
v\in \contact, p\in\polarset{L}, \iprod{v}{p}=1},
\]
and the \emph{contact operator cone} by
\[
\uconttensorcr= 
\poscone{ \ucontopcr}.
\]

With this notation, \Href{Corollary}{cor:iprodinclusion} can be restated as follows.
\begin{cor}\label{cor:local_inclusion_along_curve}
 If 
  \begin{equation}\label{eq:inwardconditioncone}
 (H, W, z)  \in \inter{\parenth{\polarset{\uconttensorcr}}},
  \end{equation}
 then for a sufficiently small $\epsilon>0$, the set $A_t K + h_t$ is contained in $\inter{L}$ for all $t\in(0,\epsilon)$.
\end{cor}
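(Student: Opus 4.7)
The plan is to recognize that Corollary \ref{cor:local_inclusion_along_curve} is just a restatement of Corollary \ref{cor:iprodinclusion} in the language of cones, so the proof reduces to matching the two hypotheses. First I would unwind the definition of the cone: since $\uconttensorcr = \pos(\ucontopcr)$, taking polars gives
\[
\polarset{\uconttensorcr} = \braces{(H,W,z)\in\rotpos \st \iprodsp{(H,W,z)}{(p\otimes v, p\otimes v, p)} \leq 0 \text{ for every generator}},
\]
where the generators are indexed by contact points $v \in \contact$ and normals $p \in \polarset{L}$ with $\iprod{v}{p} = 1$. So the nontrivial task is to translate membership in the \emph{interior} of $\polarset{\uconttensorcr}$ into a \emph{strict} inequality against every generator.

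The key step, and the only place where real work happens, is a compactness argument showing that $\ucontopcr$ is a compact subset of $\rotpos$ bounded away from the origin. For this I would check three things: (i) $\contact = \bd{K}\cap\bd{L}$ is compact, being a closed subset of the compact body $L$; (ii) since $K$ contains the origin in its interior, there is $r>0$ with $r\ball{d}\subseteq K\subseteq L$, and the normalization $\iprod{v}{p}=1$ combined with $p \in \polarset{L}$ forces a uniform bound $\norm{p}\leq 1/r$; (iii) the map assigning outward normals to boundary points has a closed graph by standard convexity facts recorded in \Href{Lemma}{lem:tangent_normal_cones_duality}. Once $\ucontopcr$ is compact and avoids the origin, the standard fact that the interior of the polar of a compact generating set is exactly the set of directions strictly separating it gives the equivalence
\[
(H,W,z)\in\inter\parenth{\polarset{\uconttensorcr}} \iff \iprodsp{(H,W,z)}{(p\otimes v, p\otimes v, p)} < 0 \text{ for all generators},
\]
via $\sup_{u\in\ucontopcr}\iprodsp{(H,W,z)}{u}<0$ on the compact set $\ucontopcr$.

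The right-hand condition is exactly the hypothesis \eqref{eq:inwardconditionIprod} of \Href{Corollary}{cor:iprodinclusion}, and its conclusion coincides verbatim with the one we want. I expect the main (mild) obstacle to be the uniform bound on $p$ and the verification that the generator set really is closed; everything else is formal unfolding of definitions. If needed, one can invoke the compactness argument informally, as is already done at the end of the proof of \Href{Lemma}{lem:glmp_contact_point_along_curve}.
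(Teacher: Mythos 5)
Your proof is correct, and it matches the intent of the paper, which introduces this corollary with the phrase ``can be restated as follows'' and supplies no further argument; the content behind that phrase is exactly the equivalence of the interior-of-polar condition with the pointwise strict inequalities, which you work out. The compactness verification is sound: $\contact$ is closed and bounded, $\iprod{p}{v}=1$ together with $r\ball{d}\subseteq L$ gives $\|p\|\le 1/r$, while $v\in L$ bounded gives $\|p\|$ bounded away from zero, so $\ucontopcr$ is compact and avoids the origin, and then $\sup_{u\in\ucontopcr}\iprodsp{(H,W,z)}{u}<0$ characterizes the interior of the polar cone.

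One remark worth making: for the stated implication you only need the \emph{easy} direction, namely that $(H,W,z)\in\inter\parenth{\polarset{\uconttensorcr}}$ forces $\iprodsp{(H,W,z)}{u}<0$ for every nonzero $u\in\uconttensorcr$, and this direction requires no compactness at all. If $(H,W,z)+\epsilon\ball{}\subset\polarset{\uconttensorcr}$, then for any nonzero generator $u=(p\otimes v,p\otimes v,p)$ (nonzero because $\iprod{p}{v}=1$ forces $p\ne\origin$) the point $(H,W,z)+\epsilon u/\|u\|$ lies in the polar, giving $\iprodsp{(H,W,z)}{u}\le -\epsilon\|u\|<0$. This is precisely the hypothesis of \Href{Corollary}{cor:iprodinclusion}, and the conclusion follows. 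The compactness argument you carry out proves the converse implication as well, which is the extra content behind calling the two corollaries ``restatements'' of one another, but it is not needed to derive \Href{Corollary}{cor:local_inclusion_along_curve} from \Href{Corollary}{cor:iprodinclusion}. Also, the closedness of the set $\braces{(v,p)\st v\in\contact,\ p\in\polarset{L},\ \iprod{v}{p}=1}$ is immediate as a finite intersection of closed sets; you do not really need to invoke a closed-graph property of the normal cone map or \Href{Lemma}{lem:tangent_normal_cones_duality} for it.
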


\begin{dfn}\label{dfn:locmax}
Let $K\subseteq L$ be convex bodies in $\Red$.
We say that $K$ is of \emph{locally maximum volume among elements of $\rpositions{K}$} (resp., \emph{$\apositions{K}$}) contained in $L$, if there is no $t_0>0$ and differentiable curve $A_t:[0,t_0)\longrightarrow \rposition$ (resp., $A_t:[0,t_0)\longrightarrow \aposition$) such that $A_0=\id_{\Red}$, $\diffzero \volnf{A_t(K)}>0$ and $A_t(K)\subset \inter{L}$ for all $t\in(0,t_0)$.
\end{dfn}

For some fixed subspace $F$ of $\Red$, we set
\[
\gammaderivatives= \bigg\{ (H,W,z) \in \rotpos \st H\ \text{ is an } F\text{-operator, } W=-\transpose{W},\ z \in \Red\bigg\}.
\]

\begin{lem}[Local optimality in $\rpositions{K}$]\label{lem:coneconditionNoImprovementRotation}
    If $\origin\in\inter{K}$ and $K$ is of locally maximum volume among elements of $\rpositions{K}$ contained in $L$, then
    \begin{equation*}
    \parenth{\conerotpos + \ort{\gammaderivatives}}
    \cap \uconttensorcr \neq \{\originmsp{\rotpos}\}.
    \end{equation*} 
\end{lem}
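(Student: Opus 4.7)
The plan is to prove the statement directly, via a first-order analysis followed by a Farkas-type cone duality argument in the space $\rotpos$. Set $r := (\id_{\Red}, \originmsp{\sqmatrix{d}}, \origin)$, so that $\conerotpos = \pos{\{r\}}$, $r \in \gammaderivatives$, and $\iprodsp{r}{r} = d$. The strategy is to show that local optimality forces $r \in \ort{\gammaderivatives} + \uconttensorcr$; rearranging will then produce the sought nonzero element of the intersection.

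The first step is to translate local optimality into the first-order inequality
\[
\iprodsp{r}{u} \,=\, \trace{H} \,\leq\, 0 \qquad \text{for all } u = (H,W,z) \in \gammaderivatives \cap \polarset{\uconttensorcr}.
\]
For $u \in \gammaderivatives \cap \inter\polarset{\uconttensorcr}$ this is immediate from \Href{Corollary}{cor:local_inclusion_along_curve} together with \eqref{eq:derivativeistrace}. To extend the bound to the boundary of $\polarset{\uconttensorcr}$, I will use the auxiliary direction $u_0 := (-\id_{\Red}, \originmsp{\sqmatrix{d}}, \origin) \in \gammaderivatives$: it pairs with any $(p\otimes v, p\otimes v, p) \in \ucontopcr$ as $-\iprod{v}{p} = -1$, so by compactness of $\ucontopcr$ it belongs to $\inter \polarset{\uconttensorcr}$. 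For any $u \in \gammaderivatives \cap \polarset{\uconttensorcr}$ the perturbation $u + \epsilon u_0$ thus lies in $\gammaderivatives \cap \inter\polarset{\uconttensorcr}$ for every $\epsilon > 0$, so the interior case yields $\trace{H} - \epsilon d \leq 0$; letting $\epsilon \to 0^+$ gives the claim. This perturbation-to-the-boundary step is the main technical subtlety I anticipate.

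The second ingredient is the closedness of $\ort{\gammaderivatives} + \uconttensorcr$. First, $\uconttensorcr = \pos{\ucontopcr}$ is closed because $\ucontopcr$ is a continuous image of the compact set of normalized contact pairs and does not contain the origin (the third coordinate $p$ is nonzero). Writing $P$ for the orthogonal projection of $\rotpos$ onto $\gammaderivatives$, the image $P(\ucontopcr)$ is compact and again avoids the origin (any element of $\ort{\gammaderivatives}$ has vanishing third coordinate, as one checks from the definitions). Hence $P(\uconttensorcr) = \pos{P(\ucontopcr)}$ is closed, and therefore $\ort{\gammaderivatives} + \uconttensorcr = P^{-1}\!\parenth{P(\uconttensorcr)}$ is closed as well.

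Standard cone calculus then gives
\[
(\gammaderivatives \cap \polarset{\uconttensorcr})^\circ
\,=\, \cl{\ort{\gammaderivatives} + (\polarset{\uconttensorcr})^\circ}
\,=\, \ort{\gammaderivatives} + \uconttensorcr,
\]
using $\uconttensorcr^{\circ\circ} = \uconttensorcr$ and the closedness just established. The first-order inequality says exactly $r \in (\gammaderivatives \cap \polarset{\uconttensorcr})^\circ$, so $r = w + \eta$ with $w \in \ort{\gammaderivatives}$ and $\eta \in \uconttensorcr$. Rearranging, $\eta = r - w \in \conerotpos + \ort{\gammaderivatives}$; and since $r \in \gammaderivatives$ is orthogonal to $w$, we have $\iprodsp{r}{\eta} = \iprodsp{r}{r} = d \neq 0$, so $\eta \neq \originmsp{\rotpos}$, delivering the required nontrivial element of $(\conerotpos + \ort{\gammaderivatives}) \cap \uconttensorcr$.
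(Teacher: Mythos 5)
Your proof is correct and takes essentially the same route as the paper's: the paper reduces local optimality to the triviality of $\gammaderivatives \cap \inter\parenth{-\polarset{\parenth{\conerotpos}}} \cap \inter{\parenth{\polarset{\uconttensorcr}}}$ and then appeals to ``basic separation properties of cones,'' and you simply carry out that separation step in full (extension of the first-order inequality to the boundary via the direction $(-\id_{\Red},\originmsp{\sqmatrix{d}},\origin)$, closedness of $\ort{\gammaderivatives}+\uconttensorcr$, and the polar-of-intersection formula). One small caveat: since $\uconttensorcr$ must be the \emph{convex} cone generated by $\ucontopcr$ (this is how it is used in the proof of \Href{Theorem}{thm:genrotation}), the closedness claims require the origin to lie outside the convex hulls of $\ucontopcr$ and of $P(\ucontopcr)$, which does not follow from $p\neq\origin$ alone but does follow from the normalization $\tr{p\otimes v}=\iprod{v}{p}=1$, a linear functional that is constant $1$ on $\conv{\ucontopcr}$ and vanishes on $\ort{\gammaderivatives}$.
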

\begin{proof}
By \eqref{eq:derivativeOfO} and \eqref{eq:derivativeOfgamma}, we have that if $(H,W,z)$ is in $\gammaderivatives$, then there is a curve $\gamma_t$ in $\rotpos$ with $\diffzero\gamma_t=(H,W,z)$, and $\gamma_0=(\id_{\Red},\originmsp{\sqmatrix{d}},\origin)$ such that $A_t$ defined by \eqref{eq:atdef} is a perturbation of $K$ with $A_t(K)\in\rpositions{K}.$

Combining this observation with \Href{Lemma}{lem:volume_along_curve} and \Href{Corollary}{cor:local_inclusion_along_curve}, we obtain the following.

If $K\subseteq L$ are convex bodies in $\Red$ with $\origin\in\inter L$, and
\begin{equation*}
\gammaderivatives \cap \inter{\parenth{\polarset{\uconttensorcr}}} \cap  \inter\parenth{-\polarset{\parenth{\conerotpos}}}
\neq\{ \originmsp{\rotpos}\},
\end{equation*}
then there is a curve $\gamma$ showing that $K$ is not of locally maximum volume among elements of $\rpositions{K}$ contained in $L$.

Equivalently, if $K$ is of locally maximum volume among elements of $\rpositions{K}$ contained in $L$, then
\begin{equation}\label{eq:coneconditionforimprovement}
\gammaderivatives \cap  
\inter\parenth{-\polarset{\parenth{\conerotpos}}} \cap 
\inter{\parenth{\polarset{\uconttensorcr}}}
=\{\originmsp{\rotpos}\}.
\end{equation}

Notice that $\gammaderivatives$ is a linear subspace in $\rotpos$, and the other two sets on the left-hand side are open cones in $\rotpos$.
By basic separation properties of cones in Euclidean space, the proof of \Href{Lemma}{lem:coneconditionNoImprovementRotation} is complete.
\end{proof}

What changes if we are not allowed to use a rotation in the setting of \Href{Theorem}{thm:genfixed}? In this case, the curve $A_t$ is in the subspace $\aposition$, or, equivalently, $\gamma_t$ is such that $R_t=\id_{\Red}$ for all $t$, which is equivalent to the condition that $\diffzero\gamma_t$ takes all values in 
\[
\gammaderivatives_0= \bigg\{ (H,\originmsp{\sqmatrix{d}} ,z) \in \rotpos \st H\ \text{ is an } F\text{-operator, }\ z \in \Red\bigg\}.
\]
Thus, \eqref{eq:coneconditionforimprovement} changes to the following.

If $K$ is of locally maximum volume among elements of $\apositions{K}$ contained in $L$, then
\begin{equation*}
\gammaderivatives_0 \cap  
\inter\parenth{-\polarset{\parenth{\conerotpos}}} \cap 
\inter{\parenth{\polarset{\uconttensorcr}}}
=\{\originmsp{\rotpos}\}.
\end{equation*}

This yields the following.
\begin{lem}[Local optimality in $\apositions{K}$]\label{lem:coneconditionNoImprovementFixedf}
    If $\origin\in\inter{K}$ and $K$ is of locally maximum volume among elements of $\apositions{K}$ contained in $L$, then
    \begin{equation*}
    \parenth{\conerotpos + \ort{\gammaderivatives_0}}
    \cap \uconttensorcr \neq \{\originmsp{\rotpos}\}.
    \end{equation*}    
\end{lem}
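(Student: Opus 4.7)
The plan is to mirror the proof of \Href{Lemma}{lem:coneconditionNoImprovementRotation} almost verbatim, since the authors have already assembled essentially all of the necessary ingredients in the paragraph preceding the statement. The only substantive change is that an admissible one-parameter family $\gamma_t$ representing a curve in $\aposition$ must satisfy $R_t \equiv \id_{\Red}$. Differentiating at $t = 0$ forces $W = \originmsp{\sqmatrix{d}}$, so that the admissible values of $\diffzero \gamma_t$ range over the subspace $\gammaderivatives_0$ rather than over the larger space $\gammaderivatives$.

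Next, applying \Href{Lemma}{lem:volume_along_curve} and \Href{Corollary}{cor:local_inclusion_along_curve} exactly as in the rotation case, I would conclude that if $K$ (with $\origin \in \inter K$) is of locally maximum volume among elements of $\apositions{K}$ contained in $L$, then no direction in $\gammaderivatives_0$ can simultaneously strictly increase the volume and preserve the inclusion at first order. In symbols,
\[
\gammaderivatives_0 \cap \inter\parenth{-\polarset{\parenth{\conerotpos}}} \cap \inter{\parenth{\polarset{\uconttensorcr}}} = \{\originmsp{\rotpos}\},
\]
which is the ``no-rotation'' analogue of the identity~\eqref{eq:coneconditionforimprovement} appearing in the proof of \Href{Lemma}{lem:coneconditionNoImprovementRotation}.

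The closing step is the same cone separation argument as in the rotation case: because $\gammaderivatives_0$ is a linear subspace of the Euclidean space $\rotpos$ while the remaining two factors are open convex cones, the standard hyperplane separation theorem together with bipolar duality in $\rotpos$ produces a nonzero element of $(\conerotpos + \ort{\gammaderivatives_0}) \cap \uconttensorcr$, which is precisely the desired conclusion. I anticipate no genuine obstacle here: the separation step uses nothing about $\gammaderivatives$ beyond its being a linear subspace of $\rotpos$, so replacing $\gammaderivatives$ by $\gammaderivatives_0$ throughout the chain of implications goes through unchanged.
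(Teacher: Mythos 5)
Your proposal is correct and follows exactly the route the paper takes: the paper derives this lemma from the rotation case by noting that curves in $\aposition$ correspond to $R_t\equiv\id_{\Red}$, hence $W=\originmsp{\sqmatrix{d}}$, replacing $\gammaderivatives$ by $\gammaderivatives_0$ in the empty-intersection condition and then applying the same cone separation argument. No issues.
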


In the next two sections, we prove \Hrefs{Theorems}{thm:genrotation}{thm:genfixed}, using
\Hrefs{Lemmas}{lem:coneconditionNoImprovementRotation}{lem:coneconditionNoImprovementFixedf}, respectively.

\section{Proof of \Hrefstitle{Theorems}{thm:genrotation}{thm:genfixed}}\label{sec:genproof}

We fix a non-trivial subspace $F$ of $\Red$ of dimension $s.$
We use $P_F$ to denote the orthogonal projection onto $F$.
Set
    \[
     \mathcal{Q}_F=\braces{Q\in\lin{\Red} \st P_FQP_F=\originmsp{F}, \rank{P_{\ort{F}}QP_{\ort{F}}}=d - s, \tr{P_{\ort{F}}QP_{\ort{F}}}=0},
    \]
where $\originmsp{F}$ is the zero operator on $F$.  In coordinates, if we consider an orthonormal basis of $F$ and extend it to an orthonormal basis of $\Red$, then the matrix of $Q\in\mathcal{Q}_F$ in this basis is of the form 
\[
  Q=\begin{pmatrix} \originmsp{s \times s}&Q_1\\ Q_2&Q_3\end{pmatrix},
  \]
where $\originmsp{s \times s}$ is the $s\times s$ zero matrix,  
$Q_1$ and $\transpose{Q_2}$
are  $s \times (d-s)$ matrices, and $Q_3$ is a full rank $(d-s)\times(d-s)$  matrix of zero trace.

\begin{lem}\label{lem:ort_complement_derriv_space}
The following identities hold:
\[
\ort{\gammaderivatives} = \mathcal{Q}_F\times\Symmatrix{d}\times\{\origin\},
\quad \text{and} \quad 
\ort{\gammaderivatives_0} = \mathcal{Q}_F\times\sqmatrix{d}\times\{\origin\}.
\]
\end{lem}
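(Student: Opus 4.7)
The strategy is a straightforward factor-wise computation of orthogonal complements. Since the inner product on $\rotpos$ is the orthogonal direct sum of the Frobenius products on the two $\sqmatrix{d}$-factors and the standard inner product on $\Red$, and both $\gammaderivatives$ and $\gammaderivatives_0$ are Cartesian products of three linear subspaces---one in each factor---the orthogonal complement of such a product is the product of the complements computed in the three factors. Thus it suffices to compute each factor separately.

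Two of the three computations are immediate. The translation factor is all of $\Red$ in both cases, so its orthogonal complement is $\{\origin\}$. The second ($W$-)factor of $\gammaderivatives$ is the subspace of skew-symmetric matrices $\braces{W \in \sqmatrix{d} \st W = -\transpose{W}}$, whose Frobenius orthogonal complement is $\Symmatrix{d}$ by the standard decomposition of a matrix into its symmetric and skew parts. For $\gammaderivatives_0$ the corresponding factor is $\{\originmsp{\sqmatrix{d}}\}$, whose complement is all of $\sqmatrix{d}$.

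The main step is to identify the orthogonal complement of the space of infinitesimal $F$-operators inside $\sqmatrix{d}$. Fixing an orthonormal basis of $F$ and extending it to an orthonormal basis of $\Red$, such an operator takes the block form
\[
H = \begin{pmatrix} H_1 & \mathbf{0}_{s\times(d-s)} \\ \mathbf{0}_{(d-s)\times s} & \lambda\, \id_{d-s}\end{pmatrix}, \quad H_1 \in \Re^{s\times s},\ \lambda \in \Re,
\]
with $H_1$ and $\lambda$ free parameters. Writing a general matrix in the same block decomposition as $Q = \begin{pmatrix} Q_{11} & Q_{12} \\ Q_{21} & Q_{22}\end{pmatrix}$, a direct block multiplication expands the Frobenius pairing to
\[
\iprodsp[\sqmatrix{d}]{Q}{H} = \tr{Q_{11}\transpose{H_1}} + \lambda\, \tr{Q_{22}}.
\]
Since $H_1$ and $\lambda$ range independently, this pairing vanishes for every admissible $H$ if and only if $Q_{11} = \originmsp{s\times s}$ and $\tr{Q_{22}} = 0$, which in intrinsic notation reads $P_F Q P_F = \originmsp{F}$ and $\tr{P_{\ort{F}} Q P_{\ort{F}}} = 0$. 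These are exactly the linear-subspace conditions defining $\mathcal{Q}_F$.

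Assembling the three factor-wise complements yields the two claimed identities. I do not expect any serious obstacle; the only point deserving care is the independence of $H_1$ and $\lambda$ in the final block computation, which is what forces the two conditions on $Q$ to hold separately and thereby pins down the first-factor complement.
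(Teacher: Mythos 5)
Your proposal is correct and follows essentially the same route as the paper's proof: both reduce to the three factors separately (translation factor gives $\{\origin\}$, the skew-symmetric/zero factor gives $\Symmatrix{d}$ resp.\ $\sqmatrix{d}$, and the block computation $\tr{M\transpose{H}}=\tr{M_1\transpose{H_1}}+\mu\trace{M_4}$ with $H_1$ and the scalar independent pins down the conditions defining $\mathcal{Q}_F$). The only (shared) point of imprecision is that you, like the paper, read off only the linear conditions on $Q$ and quietly ignore the full-rank clause in the stated definition of $\mathcal{Q}_F$, which is harmless here.
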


\begin{proof}
Let $\parenth{M,S,g}\in \rotpos$. 
If $\parenth{M,S,g} \in \ort{\mathcal{D}}$, then
\[
\iprodsp{\parenth{M,S,g}}
{\parenth{\originmsp{\sqmatrix{d}},
\originmsp{\sqmatrix{d}},z}}
=
\iprod{g}{z}=0
\]
for all $z\in\Red$. Hence, $g=\origin$.

The orthogonal complement of the space of skew-symmetric matrices in $\sqmatrix{d}$ is $\Symmatrix{d}$. 
Thus, $S\in \Symmatrix{d}$.

Finally, let $M$ and an $F$-matrix $H$ be of the form:
\[
H=\begin{pmatrix} {H_1}& \origin_{s \times (d-s)}\\ \origin_{ (d-s) \times s} & \mu \id_{d-s}\end{pmatrix}
\quad \text{and} \quad
M=\begin{pmatrix} {M_1}& {M_2}\\ {M_3} & {M_4}\end{pmatrix}
\]
for some non-zero constant $\mu$,
where
$H_1$ and $M_1$ are $s\times s$ matrices, $M_4$ is a $(d-s)\times(d-s)$ matrix, and $M_2$ and $\transpose{M_3}$ are $s\times(d-s)$ matrices.

Then
\[
\iprodsp{\parenth{M,S,\origin}}
{\parenth{H,\originmsp{\sqmatrix{d}},z}}
=
\tr{M \transpose{H}}=\tr{M_1 \transpose{H_1}}+\mu \trace{M_4}.
\]

As $H_1$ and $\mu$ can be chosen arbitrarily,
\[
\iprodsp{\parenth{M,S,\origin}}{\parenth{H,\originmsp{\sqmatrix{d}},z}}=0
\]
if and only if $M_1=\origin_{s\times s}$ and $\trace{M_4}=0$, while there are no constraints on $M_2$ and $M_3$. This is equivalent to $M\in\mathcal{Q}_F$. 

Thus, 
\[
\ort{\gammaderivatives} = \mathcal{Q}_F\times\Symmatrix{d}\times\{\origin\}.
\]

The second statement of the lemma follows analogously.
\end{proof}

\subsection{Proofs of \Hrefstitle{Theorem}{thm:genrotation}{thm:genfixed}}
\noshow{
In this subsection, we prove variants of these two theorems without a translation $z$, and as a price, we do not obtain \eqref{eq:answerGLMPfixedsubspace_zerosumv}.

The following result differs slightly from \Href{Theorem}{thm:genrotation}.

\begin{thm}\label{thm:locmaxrotation}
 Let $K$, $L$ and $F$ satisfy the conditions of \Href{Theorem}{thm:genrotation}, and assume that the origin is in the interior of $K$, and that $K$ is of locally maximum volume among elements of $\rpositions{K}$ contained in $L$.
 
 Then we have contact pairs of $K$ and $L$ and weights satisfying \eqref{eq:answerGLMPfixedsubspace_decomposition}, \eqref{eq:answerGLMPfixedsubspace_zerosum}, \eqref{eq:answerGLMPfixedsubspace_trace}, and \eqref{eq:answerGLMProtation}.
\end{thm}
}

\begin{proof}[Proof of \Href{Theorem}{thm:genrotation}]
\Hrefs{Lemmas}{lem:coneconditionNoImprovementRotation}{lem:ort_complement_derriv_space} yield that 
   \begin{equation*}
    \parenth{\poscone\{(\id_{\Red},\originmsp{\sqmatrix{d}},\origin)\} + \mathcal{Q}_F\times\Symmatrix{d}\times\{\origin\}}
    \cap \uconttensorcr \neq \{\originmsp{\rotpos}\}.
    \end{equation*} 

Thus, there are matrices $Q \in \mathcal{Q}_F$  and $S \in \Symmatrix{d}$ such that 
$\parenth{\id_{\Red} + Q, S, \origin} \in  \uconttensorcr$.
 That is, there are contact pairs 
 $(v_1, p_1),\dots,(v_m, p_m)$ of $K$ and $L,$  and weights $\alpha_1,\dots,\alpha_m > 0$ such that  $\sum\limits_{i\in [m]} \alpha_i p_i =\origin$
 and the matrix 
$M=\sum\limits_{i \in [m]} \alpha_i p_i\otimes v_i$
satisfies the conditions
\begin{itemize}
\item $ P_F M P_F= P_F(\id_{\Red} +Q)P_F = P_F \id_{\Red} P_F = P_F$, 
\item $M = \transpose{M}$, 
\item $\trace{M} =\tr{\id_{\Red} +Q} = \trace{\id_{\Red}} = d.$
\end{itemize}
The proof of \Href{Theorem}{thm:genrotation} is complete.
\end{proof}

\noshow{
     
}
The proof of \Href{Theorem}{thm:genfixed} is essentially identical to the previous one.

\begin{proof}[Proof of \Href{Theorem}{thm:genfixed}]
\Hrefs{Lemmas}{lem:coneconditionNoImprovementFixedf}{lem:ort_complement_derriv_space} yield that 
   \begin{equation*}
    \parenth{\poscone\{(\id_{\Red},\originmsp{\sqmatrix{d}},\origin)\} + \mathcal{Q}_F\times\sqmatrix{d}\times \{\origin\}}
    \cap \uconttensorcr \neq \{\originmsp{\rotpos}\}.
    \end{equation*} 

Thus, there are matrices $Q \in \mathcal{Q}_F$  and $S \in \sqmatrix{d}$ such that 
$\parenth{\id_{\Red} + Q, S, \origin} \in  \uconttensorcr$.
 That is, there are contact pairs 
 $(v_1, p_1),\dots,(v_m, p_m)$ of $K$ and $L,$  and weights $\alpha_1,\dots,\alpha_m > 0$ such that  $\sum\limits_{i\in [m]} \alpha_i p_i =\origin$
 and 
 the matrix 
$M=\sum\limits_{i \in [m]} \alpha_i p_i\otimes v_i$
satisfies the conditions
\begin{itemize}
\item $ P_F M P_F= P_F(\id_{\Red} +Q)P_F = P_F \id_{\Red} P_F = P_F$, and
\item $\trace{M} =\tr{\id_{\Red} +Q} = \trace{\id_{\Red}} = d.$
\end{itemize}
The proof of \Href{Theorem}{thm:genfixed} is complete.
\end{proof}

\subsection{Rotations}
This subsection is a small detour in which we clarify the geometry hidden behind the symmetry condition \eqref{eq:answerGLMProtation}. The proofs of our other results do not rely on it.

\begin{thm}
    Let $K$ and $L$ be convex bodies in $\Red$ such that the origin is in the interior of $L \cap K,$ and  there are no 
    contact pairs $(v_1, p_1),\dots, (v_m, p_m)$ of $L$ and $K$ and corresponding
    positive weights $\alpha_1,\dots,\alpha_m$
satisfying identity \eqref{eq:answerGLMProtation}.
Then for some $\epsilon>0$, there is a differentiable path $R_t,$ $t \in (0, \epsilon]$ of orthogonal transformations such that 
$R_t K \subset \inter{L}$ for all  $t \in (0,\epsilon].$
\end{thm}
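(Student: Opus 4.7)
The plan is to restrict the infinitesimal-perturbation technique of \Href{Section}{sec:firstorder} to the rotation subgroup and recast the problem as a separation question in the space of skew-symmetric matrices. Any differentiable path $R_t\in O(d)$ with $R_0 = \id_{\Red}$ fits into the framework of \eqref{eq:path_in_m_space} with $H=\originmsp{\sqmatrix{d}}$ and $h_t=\origin$, so its derivative in $\rotpos$ has the form $(\origin, W, \origin)$ with $W$ skew-symmetric. By \Href{Corollary}{cor:iprodinclusion}, the desired path exists as soon as we find a skew $W$ satisfying
\[
\iprodsp{(\origin, W, \origin)}{(p\otimes v,\,p\otimes v,\,p)} = \iprod{p}{Wv} < 0
\]
for every contact pair $(v,p)$ of $K$ and $L$; one can then take $R_t := \exp(tW)$, which is a smooth curve in $O(d)$ with $\diffzero R_t = W$.

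The next step is to reformulate the existence of such a $W$ as a separation problem. Since $\tr{WS}=0$ for every symmetric $S$, we have $\iprod{p}{Wv}=\tr{Wvp^\top}=\tr{W A(v,p)}$, where $A(v,p):=\tfrac12(vp^\top - pv^\top)$ is the skew part of $vp^\top$. Thus the task is to find a skew $W$ with $\tr{W A(v,p)}<0$ uniformly over $(v,p)\in\contactpair$. The set $\contactpair$ is compact: since $\origin\in\inter L$, the polar $\polarset L$ is bounded, hence so is $p$, and the normalization $\iprod{p}{v}=1$ then bounds $v$. Therefore $\mathcal{A}:=\{A(v,p):(v,p)\in\contactpair\}$ is a compact subset of the finite-dimensional space of skew matrices, and so is $\conv{\mathcal{A}}$. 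By separation of the origin from the compact convex set $\conv{\mathcal{A}}$ via the Frobenius inner product, such a $W$ exists if and only if $\origin\notin\conv{\mathcal{A}}$.

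Finally, I would close the loop by Carathéodory's theorem. If, contrary to what we want, $\origin\in\conv{\mathcal{A}}$, then $\origin$ is a convex combination of at most $\binom{d}{2}+1$ of the $A(v_i,p_i)$; after dropping any terms with zero weight we obtain contact pairs $(v_1,p_1),\dots,(v_m,p_m)$ and strictly positive weights $\alpha_i$ with $\sum_i\alpha_i(v_ip_i^\top - p_iv_i^\top)=\origin$, which is exactly identity \eqref{eq:answerGLMProtation}; this contradicts the hypothesis. The main (mild) technical obstacle is establishing the compactness of $\contactpair$ and carefully handling the Carathéodory step so as to secure strict positivity of the weights; beyond that, everything reduces to straightforward linear algebra combined with the exponential-map construction of the rotation curve.
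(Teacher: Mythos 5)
Your proposal is correct and follows essentially the same route as the paper: reduce to finding a skew-symmetric $W$ with $\iprod{p}{Wv}<0$ for all contact pairs via a separation argument in matrix space, then take $R_t=\exp(tW)$ and invoke the tangent-cone/compactness machinery of Section~\ref{sec:firstorder}. Your formulation (separating $\origin$ from the compact convex hull of the skew parts $\tfrac12(vp^\top-pv^\top)$) is just a reparametrization of the paper's separation of the cone $\poscone{\{p\otimes v\}}$ from $\Symmatrix{d}$, and if anything it makes the strictness and uniformity of the separating inequality slightly more transparent.
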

\begin{proof}
There is nothing to prove if the set of contact pairs is empty.
We assume that the cone $\operatorname{Cone}_{\mathrm{op}}=\poscone{\braces{p\otimes v \st (v,p) \text{ is a contact pair of } K  \text{ and } L}}$ is non-empty. Clearly, it is a closed set.

Then the negation of identity \eqref{eq:answerGLMProtation} is equivalent to 
\[
\Symmatrix{d} \cap \operatorname{Cone}_{\mathrm{op}} = \braces{\origin_{\sqmatrix{d}}}.
\]
Thus, there is a non-zero matrix $W$ separating the closed convex cone $\operatorname{Cone}_{\mathrm{op}}$ from $\Symmatrix{d}$ in the following sense: for every nonzero $M_1\in\Symmatrix{d}$ and $M_2\in\operatorname{Cone}_{\mathrm{op}}$, we have $\iprod{M_1}{W}_{\sqmatrix{d}}\leq 0<\iprod{M_2}{W}_{\sqmatrix{d}}$.
Since $\Symmatrix{d}$ is a subspace, $W \in \ort{\parenth{\Symmatrix{d}}},$
that is, $W$ is skew-symmetric. 

Set $R_t = e^{tW}$ and  $H = 0$  in 
\eqref{eq:path_in_m_space}. Clearly, $R_t, t \in [0,1]$ is a differentiable path of  orthogonal matrices in $\sqmatrix{d}$ satisfying \eqref{eq:derivativeOfO}.
By direct computation,
\[
\diffzero \parenth{R_t v} =  W  v.
\]
The result follows from the descriptions of the tangent cone in \Href{Lemma}{lem:tangent_normal_cones_duality} and \Href{Lemma}{lem:tangent_cone_possible_directions}, and standard compactness arguments. 

\end{proof}

\subsection{The condition is sufficient for local extremality -- in some cases}\label{sec:locmaxrotationsufficient}
According to \Href{Theorem}{thm:genfixed}, the local maximality of the volume of $K$ implies certain relations on contact pairs of $K$ and $L$.

The converse is not true in general. If we consider a regular simplex centered at the origin as $K$, and $L=-d \cdot K$, then there are contact pairs satisfying equations \eqref{eq:answerGLMPfixedsubspace_decomposition}--\eqref{eq:answerGLMProtation}, and yet, clearly, $K$ is not a locally largest volume $F$-position of $K$ in $L$, for $F=\Red$.

However, as in the classical setting, the log-concavity of the determinant (Minkowski's determinant inequality) yields the sufficiency of our conditions for positions $AK  + a$ with positive definite $A$. Denote by
\[
 \mathcal{A}_{F}^{+} (S) =\braces{AS+z\st A \text{ is a positive definite } F\text{-operator, } z\in\Red},
\]
the \emph{positive $F$-positions} of $S$.
\begin{thm}\label{thm:locmaxrotationsufficient}
 Let $K$ and $L$ be convex bodies in $\Red$ and $F$ a linear subspace in $\Red$. Assume that $K\subseteq L$ and there are contact points of $K$ and $L$ and weights satisfying \eqref{eq:answerGLMPfixedsubspace_decomposition}, \eqref{eq:answerGLMPfixedsubspace_zerosum} and \eqref{eq:answerGLMPfixedsubspace_trace}. Then $K$ is of locally maximum volume among elements of $ \mathcal{A}_{F}^{+} (K)$  contained in $L$.
\end{thm}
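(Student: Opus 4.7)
The plan is to derive a global upper bound $\det A \le 1$ for every positive-definite $F$-operator $A$ and every $z\in\Red$ with $AK+z\subseteq L$, by combining the three contact identities with weighted AM--GM (equivalently, the log-concavity of the determinant on positive-definite matrices). Since $A=\id_{\Red},\ z=\origin$ attains this bound, any differentiable curve in $\mathcal{A}_F^+(K)$ starting at $K$ and staying inside $L$ has non-positive volume derivative at $t=0^+$, which is precisely the required local maximality.

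First I would exploit the contact pairs pointwise. For any admissible $(A,z)$ and each $i$, the inclusion $Av_i+z\in AK+z\subseteq L$ combined with the support inequality $\iprod{p_i}{x}\le 1$ valid on all of $L$ gives $\iprod{p_i}{Av_i+z}\le 1$. Multiplying by $\alpha_i>0$, summing over $i$, and using identity \eqref{eq:answerGLMPfixedsubspace_zerosum} to eliminate the translation term, I obtain
\[
\iprod{M}{A}_{\sqmatrix{d}} \;=\; \sum_{i\in[m]}\alpha_i\iprod{p_i}{Av_i} \;\le\; \sum_{i\in[m]}\alpha_i \;=\; d,
\]
where $M=\sum_{i\in[m]}\alpha_i\,p_i\otimes v_i$.

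Next I would pass to a basis adapted to $F\oplus\ort{F}$, in which a positive-definite $F$-operator takes the block form $A=\begin{pmatrix}A_1 & 0\\ 0 & \mu\,\id_{d-s}\end{pmatrix}$ with $A_1$ symmetric positive definite and $\mu>0$. Identity \eqref{eq:answerGLMPfixedsubspace_decomposition} forces the upper-left $s\times s$ block of $M$ to equal $\id_s$, and identity \eqref{eq:answerGLMPfixedsubspace_trace} then forces the lower-right block of $M$ to have trace $d-s$. A direct blockwise evaluation of $\iprod{M}{A}_{\sqmatrix{d}}=\tr{M\transpose{A}}$ collapses it to $\tr{A_1}+(d-s)\mu$, so the previous step becomes the linear constraint $\tr{A_1}+(d-s)\mu\le d$.

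Finally, AM--GM on the positive eigenvalues of $A_1$ gives $(\det A_1)^{1/s}\le \tr{A_1}/s$, and weighted AM--GM with weights $s/d,(d-s)/d$ applied to the positive numbers $\tr{A_1}/s$ and $\mu$ yields
\[
(\det A)^{1/d} \;=\; (\det A_1)^{1/d}\mu^{(d-s)/d} \;\le\; \left(\tfrac{\tr{A_1}}{s}\right)^{s/d}\mu^{(d-s)/d} \;\le\; \tfrac{\tr{A_1}+(d-s)\mu}{d}\;\le\;1.
\]
Hence $\vol{AK+z}\le \vol{K}$ for every $AK+z\in\mathcal{A}_F^+(K)$ with $AK+z\subseteq L$, which is in fact stronger than the claimed local maximality. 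There is no substantive obstacle -- this is the classical John-type sufficiency argument -- the only care is to verify that the $F$-operator structure of $A$ makes only $\tr{A_1}$ and $\mu$ enter the pairing with $M$, which matches exactly the data that conditions \eqref{eq:answerGLMPfixedsubspace_decomposition} and \eqref{eq:answerGLMPfixedsubspace_trace} encode.
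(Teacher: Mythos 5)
Your argument is correct and is precisely the classical John-type sufficiency proof that the paper alludes to (it states only that ``the log-concavity of the determinant yields the sufficiency'' and leaves the details to the reader): the contact inequalities plus \eqref{eq:answerGLMPfixedsubspace_zerosum} give $\tr{M\transpose{A}}\le d$, the block structure of a positive-definite $F$-operator reduces this to $\tr{A_1}+(d-s)\mu\le d$ via \eqref{eq:answerGLMPfixedsubspace_decomposition} and \eqref{eq:answerGLMPfixedsubspace_trace}, and AM--GM yields $\det A\le 1$. In fact you prove the stronger global statement that $K$ has maximal volume among all elements of $\mathcal{A}_F^{+}(K)$ contained in $L$ (the degenerate cases $s=0$ and $s=d$ only require dropping the vacuous factor), which immediately rules out any curve with positive volume derivative as in \Href{Definition}{dfn:locmax}.
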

We leave the proof to the reader.

\section{Containment}\label{sec:containment}
Let us recall how the authors of \cite{GLMP04} derived 
\Href{Theorem}{thm:GLMP_inclusion_in_F} from \Href{Theorem}{thm:genfixed} in the case \( F = \Red \). 
Note that the identities \eqref{eq:answerGLMPfixedsubspace_decomposition}, 
\eqref{eq:answerGLMPfixedsubspace_zerosum}, and 
\eqref{eq:answerGLMPfixedsubspace_trace} in \Href{Theorem}{thm:genfixed} 
can be rewritten with respect to an arbitrary point in the interior of \( K \); in other words, 
we may choose the origin arbitrarily within the interior of \( K \). 

In \cite{GLMP04} it is shown that in the case \( F = \Red \), 
one can choose the origin so that, in addition to the identities 
\eqref{eq:answerGLMPfixedsubspace_decomposition}, 
\eqref{eq:answerGLMPfixedsubspace_zerosum}, and 
\eqref{eq:answerGLMPfixedsubspace_trace}, the following identity also holds
\[
\sum\limits_{i \in [m]} \alpha_i v_i = 0.
\]

As we will show, in our setting, the following relation suffices to prove the containment in \Href{Theorem}{thm:GLMP_inclusion_in_F}:
\begin{equation}\label{eq:desired_fixedsubspace_zerosumv}
\sum\limits_{i \in [m]} \alpha_i P_F v_i = \origin.
\end{equation}

\subsection{Inclusion for contact polytopes}
\label{subsec:inclusion_contact_polytopes}
Let $K$ and $L$ be convex bodies in $\Red$ such that 
$K \subset L$ and $K$ contains the origin in its interior. 
Let $F$ be a non-trivial subspace of $\Red.$

Assume that there are contact pairs
$(v_1, p_1), \dots, (v_m, p_m)$ of $K$ 
and $L,$ and weights $\alpha_1, \dots, \alpha_m$ such that
identities  \eqref{eq:answerGLMPfixedsubspace_decomposition} -- \eqref{eq:answerGLMPfixedsubspace_trace} hold, and moreover, identity \eqref{eq:desired_fixedsubspace_zerosumv} holds.

Denote
$K_{\text{in}} = \conv{\braces{v_1, \dots, v_m}}$ and 
$L_{\text{out}} = \bigcap\limits_{i \in [m]} \braces{x \in \Red \st 
\iprod{p_i}{x} \leq 1}.$

We call $K_{\text{in}}$ and $L_{\text{out}}$ the \emph{inner} and \emph{outer contact polytope}, respectively, corresponding to the contact pairs $(v_i,p_i)$. Clearly,
\[
K_{\text{in}} \subset K \subset L \subset L_{\text{out}}.
\] 

%

\begin{lem}
\label{lem:inclusion_contact_polytopes}
With the above notation and assumptions, we have
\[
 L_{\mathrm{out}} \cap F \subset -d\cdot P_F K_{\mathrm{in}}.
\]
\end{lem}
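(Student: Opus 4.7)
The plan is to pick an arbitrary $y \in L_{\mathrm{out}} \cap F$ and realize $-y/d$ explicitly as a convex combination of the points $P_F v_i$. Since $y \in L_{\mathrm{out}}$, we have $\iprod{p_i}{y} \le 1$ for every $i \in [m]$, so the scalars $1 - \iprod{p_i}{y}$ will supply the non-negative weights.

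The first step is to transpose identity \eqref{eq:answerGLMPfixedsubspace_decomposition}. Since $(p \otimes v)^{\top} = v \otimes p$ and $P_F$ is self-adjoint, we get $P_F \parenth{\sum_i \alpha_i v_i \otimes p_i} P_F = P_F$. Applying this to $y \in F$ (so that $P_F y = y$) yields
\[
\sum_{i \in [m]} \alpha_i \iprod{p_i}{y}\, P_F v_i \;=\; y.
\]
Combining this with assumption \eqref{eq:desired_fixedsubspace_zerosumv}, namely $\sum_i \alpha_i P_F v_i = \origin$, and subtracting gives
\[
\sum_{i \in [m]} \alpha_i \bigl(1 - \iprod{p_i}{y}\bigr) P_F v_i \;=\; -y.
\]
The coefficients $\alpha_i(1 - \iprod{p_i}{y})$ are non-negative by the choice of $y$ and positivity of $\alpha_i$. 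Their sum equals $\sum_i \alpha_i - \iprod{\sum_i \alpha_i p_i}{y} = d - 0 = d$ by \eqref{eq:answerGLMPfixedsubspace_trace} and \eqref{eq:answerGLMPfixedsubspace_zerosum}. Dividing through by $d$ expresses $-y/d$ as a bona fide convex combination of the $P_F v_i$, hence $-y/d \in \conv{\braces{P_F v_i \st i \in [m]}} = P_F K_{\mathrm{in}}$, which is the claim.

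There is no real obstacle; the only subtlety is recognizing that the right identity to apply to $y$ is the transposed decomposition, and that \eqref{eq:desired_fixedsubspace_zerosumv} is precisely what converts the linear expression $\sum \alpha_i \iprod{p_i}{y} P_F v_i$ into one with non-negative coefficients summing to $d$. In particular, the argument is essentially a verbatim $F$-restricted version of the classical Gordon--Litvak--Meyer--Pajor inclusion argument, with the projection $P_F$ entering only to make the identities live inside $F$.
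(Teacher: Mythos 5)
Your proof is correct and follows essentially the same route as the paper: transpose the decomposition identity \eqref{eq:answerGLMPfixedsubspace_decomposition}, apply it to a point of $L_{\mathrm{out}}\cap F$, and use \eqref{eq:desired_fixedsubspace_zerosumv}, \eqref{eq:answerGLMPfixedsubspace_zerosum} and \eqref{eq:answerGLMPfixedsubspace_trace} to exhibit $-y$ as $d$ times a convex combination of the $P_F v_i$. No issues.
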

\begin{proof}
Equation \eqref{eq:answerGLMPfixedsubspace_decomposition} is equivalent to
\begin{equation}
\label{eq:transposeGLMPfixedsubspace_decomposition}
P_F {\parenth{\sum\limits_{i \in [m]} \alpha_i v_i \otimes p_i}} P_F = 
P_F \transpose{\parenth{\sum\limits_{i \in [m]} \alpha_i p_i \otimes v_i}} P_F = P_F.
\end{equation}
 Let $x$ be a point in $L_{\text{out}} \cap F$, then $\iprod{p_i}{x}\leq 1$ for all $i \in [m].$ 
 \[
 -x \stackrel{x \in F}{=} - P_F x 
 \stackrel{}{=} 
   -P_F\sum\limits_{i \in [m]} \alpha_i v_i\iprod{ p_i}{P_F x}
   \stackrel{x \in F}{=}
-P_F\sum\limits_{i \in [m]} \alpha_i v_i\iprod{ p_i}{ x}
      \stackrel{\eqref{eq:desired_fixedsubspace_zerosumv}}{=}
\]
\[
\sum\limits_{i \in [m]}  \alpha_i (1-\iprod{ p_i}{x}) P_F v_i\in
 \sum\limits_{i \in [m]} \alpha_i \cdot P_F K_{\text{in}} \cdot(1-\iprod{ p_i}{x})
   \stackrel{\iprod{p_i}{x} \leq 1}{=}
\]
\[
  \parenth{\sum\limits_{i \in [m]} \alpha_i \cdot(1-\iprod{p_i}{x})}  P_F K_{\text{in}}
 \stackrel{\eqref{eq:answerGLMPfixedsubspace_trace}}{=}
 \parenth{ d-  \iprod{\sum\limits_{i \in [m]}\alpha_i p_i}{x}}P_F K_{\text{in}} 
  \stackrel{\eqref{eq:answerGLMPfixedsubspace_zerosum}}{=} d \cdot P_F K_{\text{in}}.
 \]
\end{proof}
\subsection{Search for a good center}

Now, we show that the containment $P_F K \subset L \cap F$ guarantees the existence of a translation, after which \eqref{eq:desired_fixedsubspace_zerosumv} holds.

\begin{lem}\label{lem:goodcenter}
Let $K\subseteq L$ be convex bodies in $\Red$ such that the origin is in the interior of $K$, and  there are
\begin{enumerate}
     \item an $s$-dimensional subspace $F;$
    \item contact pairs $(v_1, p_1),\dots, (v_m, p_m)$ of $L$ and $K;$
    \item positive weights $\alpha_1,\dots,\alpha_m$
\end{enumerate} 
satisfying identities  \eqref{eq:answerGLMPfixedsubspace_decomposition}, \eqref{eq:answerGLMPfixedsubspace_zerosum}, and \eqref{eq:answerGLMPfixedsubspace_trace}.
Additionally, assume  $P_F K \subset L \cap F.$

Then there exists a point $z$ in the relative interior of $L \cap F$, contact pairs 
$\{ (v_1^\prime, p_1^\prime), \dots, (v_m^\prime, p_m^\prime) \}$ of $K - z$ and $L - z$, and weights $c_1,\dots,c_m>0$ such that
\begin{equation*}
{\sum_{i \in [m]} c_i p_i^\prime \otimes v_i^\prime } = 
\sum_{i \in [m]} \alpha_i p_i \otimes v_i , \quad \text{and} \quad
\sum_{i \in [m]} c_i P_F v_i^\prime = \sum_{i \in [m]} c_i p_i^\prime=\origin .
\end{equation*}
\end{lem}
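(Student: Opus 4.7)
The plan is to translate by an explicit vector $z\in F$ that makes the third identity $\sum c_iP_Fv_i'=\origin$ hold while the other two come for free. Since translation moves a contact pair $(v_i,p_i)$ of $K$ and $L$ to the pair $(v_i-z,\,p_i/(1-\iprod{p_i}{z}))$ of $K-z$ and $L-z$ (preserving the normalization $\iprod{p_i'}{v_i'}=1$), the natural ansatz is
\[
v_i'=v_i-z,\qquad p_i'=\frac{p_i}{1-\iprod{p_i}{z}},\qquad c_i=\alpha_i\bigl(1-\iprod{p_i}{z}\bigr).
\]
Then $c_ip_i'\otimes v_i'=\alpha_ip_i\otimes v_i-\alpha_ip_i\otimes z$ and $c_ip_i'=\alpha_ip_i$, so summing and using \eqref{eq:answerGLMPfixedsubspace_zerosum} gives both $\sum c_ip_i'\otimes v_i'=\sum\alpha_ip_i\otimes v_i$ and $\sum c_ip_i'=\origin$ for every choice of $z$.

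Next I would substitute the ansatz into the remaining requirement $\sum c_iP_Fv_i'=\origin$, restrict to $z\in F$ so that $P_Fz=z$, and use $\sum\alpha_i=d$ from \eqref{eq:answerGLMPfixedsubspace_trace} and $\sum\alpha_ip_i=\origin$ from \eqref{eq:answerGLMPfixedsubspace_zerosum} to reduce the requirement to
\[
\sum_{i\in[m]}\alpha_iP_Fv_i\;-\;\sum_{i\in[m]}\alpha_i\iprod{p_i}{z}P_Fv_i\;-\;dz\;=\;\origin.
\]
The crucial observation is that the transpose of \eqref{eq:answerGLMPfixedsubspace_decomposition} reads $P_F\bigl(\sum\alpha_iv_i\otimes p_i\bigr)P_F=P_F$, so for every $z\in F$ one has $\sum\alpha_i\iprod{p_i}{z}P_Fv_i=z$. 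The equation therefore collapses to $(d+1)z=\sum\alpha_iP_Fv_i$, forcing the unique candidate
\[
z\;=\;\frac{1}{d+1}\sum_{i\in[m]}\alpha_iP_Fv_i.
\]

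It remains to check that this $z$ is admissible. Writing $\bar v=\tfrac1d\sum\alpha_iP_Fv_i$, the hypothesis $P_FK\subset L\cap F$ places each $P_Fv_i$, and hence $\bar v$, inside $L\cap F$. Then $z=\tfrac{d}{d+1}\bar v+\tfrac{1}{d+1}\origin$ is a convex combination of $\bar v\in L\cap F$ with $\origin$, and because $\origin\in\inter K\subset\inter L$ the origin lies in the relative interior of $L\cap F$; hence $z$ does too. Moreover, since $p_j$ supports $L$ at $v_j$ and $\bar v\in L$, we have $\iprod{p_j}{\bar v}\le 1$, so $\iprod{p_j}{z}\le d/(d+1)<1$; this guarantees $c_j>0$ and makes $p_j'$ a bona fide contact normal of the translated bodies.

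The main obstacle is the algebraic middle step: noticing that the transpose of \eqref{eq:answerGLMPfixedsubspace_decomposition} means the linear map $z\mapsto\sum\alpha_i\iprod{p_i}{z}P_Fv_i$ acts as the identity on $F$, which is exactly what turns the equation for $z$ into something trivial to solve. After that everything is bookkeeping, and the extra hypothesis $P_FK\subset L\cap F$ is used in a single spot: to place $\bar v$ (and hence $z$) inside $L\cap F$.
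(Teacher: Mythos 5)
Your proposal is correct and follows essentially the same route as the paper: the identical ansatz $v_i'=v_i-z$, $p_i'=p_i/(1-\iprod{p_i}{z})$, $c_i=\alpha_i(1-\iprod{p_i}{z})$, the same use of the transposed identity \eqref{eq:answerGLMPfixedsubspace_decomposition} to collapse the equation for $z$, and the same choice $z=\frac{1}{d+1}\sum_i\alpha_iP_Fv_i$. Your verification that $z$ lies in the relative interior of $L\cap F$ and that $\iprod{p_j}{z}<1$ (hence $c_j>0$) is in fact slightly more explicit than the paper's.
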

\begin{proof}
This is a purely linear algebraic argument.

We aim at finding a point $z$ in  the relative interior of $L \cap F$ such that when we shift the origin to $z$, the shifted contact pairs $(v_i - z, p_i')$ satisfy the required identities for $K - z$ and $L - z$ with appropriate normal vectors $p_i'$.

First, observe how a contact pair transforms under translation. A pair $(v, p)$ is a contact pair of $K$ and $L$ if and only if, $v$ belongs to the intersection of the boundaries of $K$ and $L$, and the half-space
\[
H_p^{-} = \braces{ x \in \Red \st \iprod{ x }{ p } \leq \iprod{ v }{ p } = 1 }
\]
contains $L$.

Under translation by $z$, the half-space becomes
\[
H_p^{-} - z = \braces{ x - z \in \Red \st \iprod{ x }{ p } - \iprod{ z }{ p } \leq 1 - \iprod{ z }{ p } } = \braces{ y \in \Red \st \iprod{ y }{ \frac{ p }{ 1 - \iprod{ z }{ p } } } \leq 1 }.
\]
Thus, provided $\iprod{ z }{ p } < 1$, the pair $(v - z, p' )$, where $p' = \dfrac{ p }{ 1 - \iprod{ z }{ p } }$, is a contact pair of $K - z$ and $L - z$.

For every $i \in [m]$, define
\[
v_i( z ) = v_i - z, \quad \text{and} \quad p_i( z ) = \frac{ p_i }{ 1 - \iprod{ z }{ p_i } }.
\]
To satisfy the identity $\sum\limits_{ i \in [m] } c_i( z ) p_i( z ) =\origin $, we set
\[
c_i( z ) = \alpha_i \parenth{ 1 - \iprod{ z }{ p_i } }.
\]
Then,
\[
c_i( z ) p_i( z ) = \alpha_i \parenth{ 1 - \iprod{ z }{ p_i } } \frac{ p_i }{ 1 - \iprod{ z }{ p_i } } = \alpha_i p_i,
\]
which implies
\[
\sum_{ i \in [m] } c_i( z ) p_i( z ) = \sum_{ i \in [m] } \alpha_i p_i =\origin .
\]

Set $M =  \sum\limits_{i \in [m]} \alpha_i p_i\otimes v_i.$
Next, we verify the identity
$\sum\limits_{ i \in [m] } c_i( z ) p_i( z ) \otimes v_i( z ) = M$. Indeed,
\[
\sum_{ i \in [m] } c_i( z ) p_i( z ) \otimes v_i( z ) = \sum_{ i \in [m] } \alpha_i \parenth{ 1 - \iprod{ z }{ p_i } }  \frac{ p_i }{ 1 - \iprod{ z }{ p_i } }
 \otimes ( v_i - z ) 
 = \sum_{ i \in [m] } \alpha_i  p_i \otimes  ( v_i - z )=
\]
\[
\sum_{ i \in [m] } \alpha_i p_i \otimes v_i -
\sum_{ i \in [m] } \alpha_i p_i \otimes z = 
M - \left( \sum_{ i \in [m] } \alpha_i p_i \right ) \otimes  
z
\stackrel{\eqref{eq:answerGLMPfixedsubspace_zerosum}}{=} M - \origin \otimes z  = M.
\]

Finally, we need to satisfy $P_F \parenth{ \sum\limits_{ i \in [m] } c_i( z ) v_i( z )} =\origin $. Computing, we have
\begin{align*}
P_F \parenth{
\sum_{ i \in [m] } c_i( z ) v_i( z )} &= \sum_{ i \in [m] } \alpha_i \parenth{ 1 - \iprod{ z }{ p_i } } P_F( v_i - z ) \\
&= \sum_{ i \in [m] } \alpha_i P_F v_i - \sum_{ i \in [m] } \alpha_i \iprod{ z }{ p_i } P_F v_i - P_F z \sum_{ i \in [m] } \alpha_i + 
P_F z \sum_{ i \in [m] } \alpha_i \iprod{ z }{ p_i }.
\end{align*}
Using the identities $\sum\limits_{ i \in [m] } \alpha_i p_i =\origin $ and 
$M = \sum\limits_{ i \in [m] } \alpha_i p_i \otimes v_i$, we can simplify:
\begin{gather*}
\sum\limits_{ i \in [m] } c_i( z ) P_F v_i( z ) = 
\sum\limits_{ i \in [m] } \alpha_i P_F v_i - 
P_F \parenth{\sum\limits_{ i \in [m] } \alpha_i v_i \otimes p_i} z - 
P_F  z \sum\limits_{ i \in [m] } \alpha_i +
P_F z \iprod{ z }{ \sum\limits_{ i \in [m] } \alpha_i p_i } \\
= \sum\limits_{ i \in [m] } \alpha_i P_F v_i - P_F \transpose{M} z - P_F z 
\parenth{ \sum\limits_{ i \in [m] } \alpha_i} + P_F z \cdot 0 
= \parenth{\sum\limits_{ i \in [m] } \alpha_i P_F v_i} - P_F \transpose{M} z - d P_F z.
\end{gather*}

Set
\[
z =  P_F \parenth{\frac{ \sum\limits_{ i \in [m] } \alpha_i v_i }{ d + 1 }}.
\]
Since by our assumption $P_F K \subset L \cap F,$ and  since $\sum\limits_{ i \in [m] } \alpha_i=d$, and $\origin\in\inter{K}$, we conclude that $z$ is in the relative interior of ${L\cap F}$.
Thus, $\sum\limits_{ i \in [m] } c_i( z ) P_F v_i( z ) = \origin$, as desired.
\end{proof}

\noshow{
    
}

\subsection{Proof of \Hreftitle{Theorem}{thm:GLMP_inclusion_in_F}}
We use \Href{Lemma}{lem:goodcenter} to find a ``good center'' $z.$
The inclusion follows from \Href{Lemma}{lem:inclusion_contact_polytopes} and the inclusion 
$K_{\text{in}} \subset K \subset L \subset L_{\text{out}},$ where $K_{\text{in}}$ and $L_{\text{out}}$ are as defined in 
\Href{Subsection}{subsec:inclusion_contact_polytopes}. In the case when $K$ is symmetric with respect to $F,$ the inclusion $P_F K \subset L \cap F$ holds.

\subsection{Application: right cones}\label{sec:rightcones}
We define a \emph{right cone} in $\Red$ as the convex hull of a $(d-1)$-dimensional compact, convex set $C_b$ (the \emph{base}) in a hyperplane $H$ and a point $a$ (the \emph{apex}) not in $H$, whose orthogonal projection $b$ (the \emph{foot point}) onto $H$ is in $C_b$. The line through $a$ orthogonal to $H$ is the \emph{axis} of the cone.

As an application of \Href{Lemma}{lem:inclusion_contact_polytopes}, we obtain the following result, whose proof is essentially the same as that of \Href{Theorem}{thm:GLMP_inclusion_in_F}:
\begin{prp}
Let $K$ be a convex body and $C_b$ a $(d-1)$-dimensional compact, convex set in $\Red$, and $b\in C_b$. Consider those right cones $C$ contained in $K$, whose base is homothetic to $C_b$ with a homothety that takes $b$ to the foot point of $C$. Assume that $C_0$ is of maximum volume in this family of right cones, and denote its axis by $\ell$. Then there is a point $z$ in the relative interior of $\ell\cap C$ such that
\[
(K-z)\cap\ell\subseteq d(C_0-z)\cap \ell.
\]
\end{prp}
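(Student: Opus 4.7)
The plan is to reduce the right-cone problem to the general framework by identifying the admissible family of cones with $\rpositions{C_0^*}$ for a carefully chosen reference cone $C_0^*$, and then to apply \Href{Theorem}{thm:genrotation} together with \Hrefs{Lemmas}{lem:goodcenter}{lem:inclusion_contact_polytopes}, mirroring the proof of \Href{Theorem}{thm:GLMP_inclusion_in_F}.

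First, I would fix a canonical reference cone: translate so that $b$ is the origin, and let $H$ be the hyperplane containing $C_b$ (which now passes through the origin). Let $e$ be a unit vector orthogonal to $H$, and set $C_0^* = \conv{\parenth{C_b \cup \{e\}}}$ with $F = \spann{e}$, the $1$-dimensional axis subspace. A direct computation verifies that $\rpositions{C_0^*}$ is exactly the family of right cones described in the proposition: an $F$-operator scales the base homothetically by some nonzero factor $\lambda_2$ on $\ort{F}$ (so that the foot point of the new cone is the image of $b$ under that homothety) and rescales the height by some nonzero factor $\lambda_1$ along $F$; composition with $R \in O(d)$ selects the axis direction, and the final translation places the cone anywhere in $\Red$.

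Next, let $C_0$ be a maximum-volume element of $\rpositions{C_0^*}$ contained in $K$, and let $\ell$ denote its axis. Choose coordinates so that the origin is an interior point of $C_0$ and $\ell$ is a $1$-dimensional linear subspace; take $F = \ell$. Then \Href{Theorem}{thm:genrotation} yields contact pairs $(v_i, p_i)$ of $C_0$ (inner) and $K$ (outer), together with positive weights $\alpha_i$, satisfying \eqref{eq:answerGLMPfixedsubspace_decomposition}--\eqref{eq:answerGLMProtation}. The geometric heart of the argument is the elementary right-cone identity $P_F C_0 = C_0 \cap \ell$: every point on the axis between the foot point $f$ and the apex $a$ lies on the segment $[f,a] \subseteq C_0$, and the reverse inclusion is trivial. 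In particular $P_F C_0 \subseteq K \cap \ell = K \cap F$, so \Href{Lemma}{lem:goodcenter} applies and produces a point $z = P_F\parenth{\tfrac{1}{d+1}\sum_{i \in [m]} \alpha_i v_i} \in F$ in the relative interior of $K \cap F$, together with updated contact pairs and weights that also satisfy the identity \eqref{eq:desired_fixedsubspace_zerosumv}. Since $\origin \in \inter{C_0}$, the explicit formula places $z$ inside $\tfrac{d}{d+1}\, P_F C_0 = \tfrac{d}{d+1}(C_0 \cap \ell)$, hence in the relative interior of $\ell \cap C_0$.

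Finally, applying \Href{Lemma}{lem:inclusion_contact_polytopes} to the translated pair $C_0 - z \subseteq K - z$ with the updated contact data gives $(K - z)_{\text{out}} \cap F \subseteq -d \cdot P_F (C_0 - z)_{\text{in}}$. Combining this with the elementary inclusions $K - z \subseteq (K - z)_{\text{out}}$ and $(C_0 - z)_{\text{in}} \subseteq C_0 - z$, and applying the right-cone identity $P_F(C_0 - z) = (C_0 - z) \cap \ell$ for the translated cone, we obtain a $d$-fold containment of $(K - z) \cap \ell$ by $(C_0 - z) \cap \ell$ along the line $\ell$, which is the conclusion of the proposition. The main technical obstacles are the identification of the admissible family with $\rpositions{C_0^*}$, which requires careful tracking of how the homothety center moves as $F$-operators act on $C_0^*$, and the right-cone identity $P_F C_0 = C_0 \cap \ell$, which both legitimizes the use of \Href{Lemma}{lem:goodcenter} and enables the passage from $P_F$-statements to axis-section statements in the conclusion.
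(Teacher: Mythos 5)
Your proof is correct and follows exactly the route the paper intends: identify the admissible right cones with $\rpositions{C_0^*}$ for a reference cone with one-dimensional axis $F=\ell$, verify the hypothesis of \Href{Lemma}{lem:goodcenter} via the right-cone identity $P_F C_0 = C_0\cap\ell$, and conclude with \Href{Lemma}{lem:inclusion_contact_polytopes}; the paper itself only says the proof is ``essentially the same as that of Theorem~\ref{thm:GLMP_inclusion_in_F}'', and your observation that $P_F C_0 = C_0\cap\ell$ is precisely the detail that legitimizes that reduction. One caveat: \Href{Lemma}{lem:inclusion_contact_polytopes} (like \Href{Theorem}{thm:GLMP_inclusion_in_F}) delivers the \emph{reflected} copy $-d\cdot P_F(C_0-z)$, so what your argument actually yields is $(K-z)\cap\ell\subseteq -d\,\bigl((C_0-z)\cap\ell\bigr)$; on a line the sign matters, so your closing phrase ``a $d$-fold containment'' should not be read as the unsigned $d(C_0-z)\cap\ell$ in the proposition's display, which appears to be missing the minus sign present in \Href{Theorem}{thm:GLMP_inclusion_in_F}.
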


In particular, if $K$ is a convex body on the plane, and $\Delta$ is the largest area isosceles triangle in $K$ with a given direction of its axis of symmetry $\ell$, then with the right choice of $z\in\Delta\cap\ell$, we have
$(K-z)\cap\ell\subseteq 2(\Delta-z)\cap \ell$. Note that the factor 2 in this formula is optimal, as shown by the example when $K$ is a square and $\ell$ is parallel to one of its diagonals.

\subsection{No containment in the whole space}\label{sec:nocontainment}
Contrasting \Href{Theorem}{thm:GLMP_inclusion_in_F}, we show that there is no meaningful inclusion in $\ort{F}$. Surprisingly, the example is provided by ellipsoids.

\begin{prp}\label{prp:badellipsoid}
    Fix an integer $0 \leq s \leq d - 2$ and a positive real $\lambda$. Let $E$ be an origin-centered ellipsoid in $\Red$ whose principal semi-axes have lengths 
    $\lambda, \lambda^2, \ldots, \lambda^d$. Let $E^\prime \in \apositions{\ball{d}}$ be the solution to 
    \Href{Problem}{prob:ellrotation}, and let the $s$-dimensional subspace $F$ be the axis of $E^\prime$. Then 
    \[
    \frac{\volnf[d]{E'}}{\volnf[d]{E}} \leq \frac{1}{\lambda}
    \quad \text{and} \quad 
    \forall x \in \Red, \quad E \cap \ort{F} \not\subset x + \lambda \cdot \inter{P_{\ort{F}} E^\prime}.
    \]
\end{prp}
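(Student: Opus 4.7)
The plan is to centre $E'$ at the origin and rewrite $E' \subseteq E$ as a Loewner inequality of the defining quadratic forms; part~(a) follows from a semi-axis counting, while part~(b) follows from the structure of $F$ forced by optimality.

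By the central symmetry $E = -E$ and convexity, the origin-centred copy $A\ball{d}$ of $E' = A\ball{d} + z$ is also contained in $E$ (it is the Minkowski midpoint of $E'$ and $-E'$, both of which lie in $E$), so I may assume $E' = A\ball{d}$ is centred at the origin, with $A$ an $F$-operator in block form $A_1$ on $F$ and $\mu\id_{\ort{F}}$ on $\ort{F}$. Then $E' \subseteq E$ is equivalent to $Q_E \preceq Q_{E'}$ in the Loewner order of the defining quadratic forms, and Weyl's monotonicity yields $\alpha_i \leq \beta_i$ for every $i$, where $\alpha_1 \geq \cdots \geq \alpha_d$ and $\beta_1 \geq \cdots \geq \beta_d$ are the sorted singular values of $A$ and of $A_E = \diag(\lambda, \lambda^2, \ldots, \lambda^d)$.

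For part (a), the multiset $\{\alpha_i\}$ has a plateau of $d-s$ equal entries at $\mu$. Assuming $\lambda \geq 1$, a direct algebraic optimisation of $\prod \alpha_i$ under $\alpha_i \leq \beta_i$ and the plateau constraint yields $\prod \alpha_i \leq \lambda^{(s+1)(d-s/2)}$ independently of where the plateau is placed; dividing by $\prod \beta_i = \lambda^{d(d+1)/2}$ gives $\vol{E'}/\vol{E} \leq \lambda^{-(d-s)(d-s-1)/2}$, and since $s \leq d - 2$ forces $(d-s)(d-s-1)/2 \geq 1$, this is at most $\lambda^{-1}$. For $\lambda < 1$ the claim is trivial: $\vol{E'}/\vol{E} \leq 1 < 1/\lambda$.

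For part (b), the central slice $E' \cap \ort{F}$ is a $(d-s)$-ball of radius $\mu$ contained in $E \cap \ort{F}$, so $\mu \leq \tau_{d-s}$, the smallest semi-axis of $E \cap \ort{F}$. The non-containment is equivalent to the circumradius $\tau_1$ of $E \cap \ort{F}$ being at least $\lambda\mu$, which is immediate for $\lambda \leq 1$ via $\lambda\mu \leq \mu \leq \tau_{d-s} \leq \tau_1$. For $\lambda > 1$, I would write $Q_E$ in block form on $F \oplus \ort{F}$ and apply a Schur-complement analysis of $Q_E \preceq Q_{E'}$: a nonzero off-diagonal block $Q_{F,\ort{F}}$ strictly shrinks the feasible region for the pair $(A_1, \mu)$, so the optimality of $E'$ forces $F$ to be spanned by coordinate axes of $E$. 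The tightness analysis of part~(a) then forces $\ort{F}$ to be the span of $d-s$ consecutive axes $e_j, \ldots, e_{j+d-s-1}$, so $E \cap \ort{F}$ has aspect ratio $\tau_1/\tau_{d-s} = \lambda^{d-s-1} \geq \lambda$, giving $\tau_1 \geq \lambda\mu$ as required.

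The main obstacle is the axis-alignment step for part~(b): verifying that a nonzero off-diagonal block strictly decreases the maximum of $\det(A_1) \cdot \mu^{d-s}$ below the axis-aligned value, and showing that among axis-aligned optima $\ort{F}$ must be a consecutive block of coordinate axes.
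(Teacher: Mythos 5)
Part (a) of your argument is correct and essentially coincides with the paper's: your symmetrization step reducing to a centered $E'$ is a clean way to get what the paper obtains only at the end, your Loewner/Weyl comparison of sorted semi-axes is exactly the paper's lemma on contained ellipsoids (proved there via Courant--Fischer), and your exponent $(s+1)(d-s/2)=d(d+1)/2-\binom{d-s}{2}$ together with the case split on $\lambda\gtrless 1$ checks out.

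Part (b), however, rests on the step you yourself flag as unresolved, and it is a genuine gap rather than a formality. Without knowing that $\ort{F}$ is spanned by $d-s$ consecutive principal axes of $E$, the inequality $\tau_1\ge\lambda\mu$ can fail: a $(d-s)$-dimensional central section of $E$ through a generic subspace can have aspect ratio $1$ (e.g.\ the circular central sections of a triaxial ellipsoid), in which case $\tau_1=\tau_{d-s}$ and the only bound you have, $\mu\le\tau_{d-s}$, gives nothing. The paper closes this step not by a Schur-complement perturbation but by the equality case of the very optimization you ran in part (a): the maximum volume over all ellipsoids of revolution in $E$ equals the bound $\volnf[d]{E}\cdot\lambda^{-\binom{d-s}{2}}$ exactly, because that bound is attained by the explicit coaxial construction; hence the optimizer achieves equality, which forces $\alpha_i=\beta_i$ for every index outside the plateau and forces the plateau to sit at consecutive positions with common value the smallest $\beta_i$ among them. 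Since the semi-axes $\lambda,\dots,\lambda^d$ are pairwise distinct, these equalities rigidify the axes one at a time (a centered chord of $E$ realizing its diameter must lie on the unique major axis; restrict to the orthogonal complement and repeat), which pins $\ort{F}$ down as the span of $d-s$ consecutive coordinate axes and yields $\tau_1/\tau_{d-s}=\lambda^{d-s-1}\ge\lambda$. I would replace your proposed Schur-complement analysis by this equality-case argument: the perturbation route would additionally require proving a \emph{strict} decrease under a nonzero off-diagonal block, which is harder than what the equality case already delivers.
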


We need the following observation:

\begin{lem}\label{lem:contaiment_ell_semi_axes}
Let $E_1 \subseteq E_2 \subset \Red$ be two ellipsoids. Let the principal semi-axes of $E_1$ be $\lambda_1 \leq  \ldots \leq \lambda_d$, and those of $E_2$ be $\mu_1 \leq \ldots \leq \mu_d$. Then $\lambda_i \leq \mu_i$ for all $i \in [d]$.
\end{lem}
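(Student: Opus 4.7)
The plan is to translate the geometric inclusion $E_1 \subseteq E_2$ into a Loewner-order inequality between the symmetric matrices defining the two ellipsoids, and then invoke eigenvalue monotonicity.

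First, I would parameterize each ellipsoid as $E_i = c_i + \{x \in \Red : \langle x, Q_i x\rangle \leq 1\}$, where $c_i$ is the center of $E_i$ and $Q_i$ is a positive definite symmetric matrix. The principal semi-axes of $E_i$ are translation-invariant, and when sorted in increasing order they are precisely the positive square roots of the eigenvalues of $Q_i^{-1}$ sorted in increasing order. In particular, the conclusion $\lambda_i \leq \mu_i$ will follow as soon as the eigenvalues of $Q_1^{-1}$ are pointwise dominated by those of $Q_2^{-1}$.

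To establish this, I would pass through widths, which are monotone under inclusion. A direct computation gives the support function $h_{E_i}(u) = \langle c_i, u\rangle + \sqrt{\langle u, Q_i^{-1}u\rangle}$, so the width of $E_i$ in direction $u$ equals $h_{E_i}(u) + h_{E_i}(-u) = 2\sqrt{\langle u, Q_i^{-1}u\rangle}$, with no dependence on the center. Since $E_1 \subseteq E_2$ implies that the projection of $E_1$ onto any line sits inside the corresponding projection of $E_2$, we obtain
\[
\langle u, Q_1^{-1} u\rangle \leq \langle u, Q_2^{-1} u\rangle \qquad \text{for every } u \in \Red,
\]
which is exactly $Q_1^{-1} \preceq Q_2^{-1}$ in the Loewner order.

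To finish, I would invoke the Courant--Fischer min--max characterization of eigenvalues: if $A \preceq B$ are symmetric positive definite matrices, then the $k$-th smallest eigenvalue of $A$ is at most the $k$-th smallest eigenvalue of $B$ for every $k$. Applying this to $A = Q_1^{-1}$ and $B = Q_2^{-1}$, and then taking square roots, yields $\lambda_i \leq \mu_i$ for all $i \in [d]$. I do not foresee any serious obstacle; the only mild subtlety is that the two ellipsoids need not be concentric, but this is neutralized by the observation that widths (and hence the matrices $Q_i^{-1}$) do not see the translation part.
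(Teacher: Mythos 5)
Your proof is correct and rests on the same key tool as the paper's, namely the Courant--Fischer min--max theorem applied to the quadratic forms of the two ellipsoids. The only cosmetic difference is how the non-concentric case is handled: you observe that widths (hence the matrices $Q_i^{-1}$) are blind to translations, whereas the paper first reduces to concentric ellipsoids by a symmetry argument; both are fine.
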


\begin{proof}
By symmetry, we may assume that both ellipsoids $E_1$ and $E_2$ are centered at the origin. Let $L_i$ be the $i$-dimensional subspace spanned by the principal semi-axes of $E_1$ with lengths $\lambda_{d-i+1}, \dots, \lambda_d$. The Courant--Fischer min-max theorem \cite[Theorem 4.2.6]{horn2012matrix} yields $\lambda_{d-i+1} \leq \mu_{d-i+1}$, concluding the proof.
\end{proof}

\begin{proof}[Proof of \Href{Proposition}{prp:badellipsoid}]
It follows that the $i$-th (by length) principal semi-axis of $E^\prime$ is at most $\lambda^i$ for each $i \in [d]$. Moreover, at least $d - s$ of them must be equal. By direct computation, the maximal possible volume of $E^\prime$ is 
\[
{\volnf{E}}{\prod\limits_{j = 0}^{d - s - 1} \lambda^{-j}}.
\]
This bound is attained whenever $E^\prime$ is coaxial with $E$, and the lengths of some consecutive semi-axes of $E^\prime$ are equal to the corresponding $\lambda^i$ from $E$. However, this fixes the construction: $\ort{F}$ must be the linear span of $d - s$ consecutive principal semi-axes of $E$. Thus, $E^\prime$ is centered at the origin. The desired statements follow since the ratio of the lengths of one collinear semi-axis of $E$ and $E^\prime$ is $\lambda$.
\end{proof}

We note that the above argument can be extended to any convex body $K$ in the following way.

\begin{prp}
Let $K$ be a convex body in $\Red$, and let $A : \Red \to \Red$ be the diagonal linear operator with diagonal entries $\lambda, \lambda^2,  \ldots, \lambda^{d}$. Let $K'=A'K$ be a solution to \Href{Problem}{prob:genrotation} with some linear operator $A'$, for $K$ and $L=AK$, with any $d-s \geq 2$. Then, for any $\mu >0$ there is some value of $\lambda>0$ such that
\begin{enumerate}
\item[(a)]  for any $\mu' < \mu$, no homothetic copy $x+ \mu' K'$ of $K'$ contains $L$,
\item[(b)] $\frac{\volnf[d]{K'}}{\volnf[d]{L}} \leq \frac{1}{\mu}$.
\end{enumerate}
\end{prp}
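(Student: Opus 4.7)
The plan is to reduce the general convex body case to the ellipsoid argument of \Href{Proposition}{prp:badellipsoid} by approximating $K$ with its John ellipsoid. Let $E$ be John's ellipsoid of $K$; after a translation placing its center at the origin, we have $E = M\ball{d}$ with $M$ a positive definite operator, $E \subseteq K \subseteq dE$. Writing $A' = RB'$ with $R \in O(d)$ and $B'$ an $F$-operator, and using $K \subseteq dE$ together with $K' \subseteq L = AK$, one obtains the ellipsoid containment
\[
A'M\ball{d} \subseteq K' \subseteq L \subseteq d \cdot AM\ball{d},
\]
up to a common translation. The key observation is that $A'$ and $B'$ have identical singular values (since $R$ is orthogonal) and that, because $B'$ is an $F$-operator, at least $d-s$ of them equal $\mu$.

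Applying \Href{Lemma}{lem:contaiment_ell_semi_axes} to the ellipsoid containment $A'M\ball{d}\subseteq d\cdot AM\ball{d}$ together with the multiplicative Weyl-type inequalities
\[
\sigma_{\min}(M)\,\sigma_i(X) \le \sigma_i(XM) \le \sigma_{\max}(M)\,\sigma_i(X)
\]
(singular values listed in decreasing order) for $X = A$ and $X = A'$ yields, for each $i \in [d]$,
\[
\sigma_i(B') \le d\kappa \cdot \lambda^{d-i+1},
\]
where $\kappa = \sigma_{\max}(M)/\sigma_{\min}(M)$ is the condition number of $E$ and we have used $\sigma_i(A) = \lambda^{d-i+1}$. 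The symmetric argument, starting from the hypothesized inclusion $L \subseteq x + \mu'K'$ and combining with $AE \subseteq L$, gives the reverse bound
\[
\sigma_i(B') \ge \frac{\lambda^{d-i+1}}{d\mu'\kappa}.
\]

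These bounds reduce the problem to the combinatorial optimization that drives \Href{Proposition}{prp:badellipsoid}. For (b), one maximizes $\det B' = \prod_i \sigma_i(B')$ under the decreasing-order upper bounds and the $(d-s)$-fold multiplicity at $\mu$. The computation is identical to the ellipsoid case, up to the uniform factor $d\kappa$ absorbed from the Weyl slack, and delivers
\[
\frac{\vol[d]{K'}}{\vol[d]{L}} = \frac{\det A'}{\det A} \le (d\kappa)^d \cdot \lambda^{-(d-s)(d-s-1)/2}.
\]
For (a), at least one of the $d-s$ singular values of $B'$ equal to $\mu$ occupies some position $i^{\ast} \le d$, so the reverse bound at $i = i^{\ast}$ forces $\mu \ge \lambda/(d\mu'\kappa)$, i.e., $\mu' \ge \lambda/(d\mu\kappa)$. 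Choosing $\lambda$ large enough to satisfy simultaneously $\lambda \ge d\mu^2\kappa$ and $\lambda^{(d-s)(d-s-1)/2} \ge \mu (d\kappa)^d$ proves both conclusions. The main obstacle is the combinatorial step: one must verify, as in \Href{Proposition}{prp:badellipsoid}, that the maximum of $\prod_i \sigma_i(B')$ subject to the ordered upper bounds and the multiplicity-$(d-s)$ constraint is independent of where the repeated $\mu$'s sit in the sorted order, and observe that the hypothesis $d-s \ge 2$ is precisely what makes the exponent $(d-s)(d-s-1)/2$ strictly positive, so that increasing $\lambda$ actually forces the ratio in (b) to $0$.
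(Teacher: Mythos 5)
Your overall strategy is a legitimate variant of the paper's: the paper sandwiches $K$ between its inscribed and circumscribed balls (with ratio $\nu$) and then quotes \Href{Proposition}{prp:badellipsoid} directly, whereas you sandwich $K$ between its John ellipsoid $E$ and $dE$ and redo the singular-value comparison by hand via \Href{Lemma}{lem:contaiment_ell_semi_axes} and the Weyl inequalities. Your part (b) is correct; I checked that the exponent $-(d-s)(d-s-1)/2$ does come out independently of where the block of $d-s$ equal singular values sits in the decreasing order, because bounding the whole block by the constraint at its last position exactly compensates the corresponding factors of $\det A$ up to that fixed shift. (Minor points: you should write $|\det A'|/|\det A|$, and you implicitly take $\lambda>1$ so that $\sigma_i(A)=\lambda^{d-i+1}$ in decreasing order; both are harmless.)

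Part (a), however, has a genuine gap. You assert that the common singular value of $B'$ on $\ort{F}$ ``equals $\mu$'' and then pass from the single lower bound $\sigma_{i^{*}}(B')\ge \lambda/(d\mu'\kappa)$ to $\mu'\ge \lambda/(d\mu\kappa)$. This conflates two unrelated quantities: the scalar by which the optimal $B'$ acts on $\ort{F}$ is part of the (unknown) solution, depends on $\lambda$, and has no a priori relation to the target ratio $\mu$ of the proposition; you have no upper bound for it at the position $i^{*}$ you use, and choosing ``$\lambda\ge d\mu^{2}\kappa$'' in terms of it would be circular. The repair must exploit the multiplicity at \emph{two} distinct positions of the equal block: if the block occupies positions $i$ and $j=i+(d-s-1)\ge i+1$, then the lower bound at $i$ and the upper bound at $j$ give
\[
\frac{\lambda^{d-i+1}}{d\mu'\kappa}\;\le\;\sigma_i(B')=\sigma_j(B')\;\le\; d\kappa\,\lambda^{d-j+1},
\qquad\text{hence}\qquad \mu'\ge \frac{\lambda^{d-s-1}}{(d\kappa)^2},
\]
and it is precisely here that $d-s\ge 2$ enters for (a) — your remark that this hypothesis is needed only for the exponent in (b) is a symptom of the gap. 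Alternatively, (a) follows at once from your (b) by volumes, since $x+\mu'K'\supseteq L$ forces $(\mu')^d\ge \volnf[d]{L}/\volnf[d]{K'}$. Either fix is short, but the step as written fails.
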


\begin{proof}
Let $B_{\mathrm{is}}$ be a largest ball contained in $K$, and let $B_{\mathrm{cs}}$ be the smallest ball containing $K$. Assume that $B_{\mathrm{cs}}=y+ \nu B_{\mathrm{is}}$ for $\nu \geq 1$, $y \in \Red$.

Let $\bar{\mu} > 0$ be the smallest number such that a homothetic copy of $K'$ of ratio $\bar{\mu}$ contains $L$. Then there is a homothetic copy of $A'B_{\mathrm{cs}}$ of ratio $\bar{\mu}$ that contains $AB_{\mathrm{is}}$. Consequently, there is a homothetic copy of $A'B_{\mathrm{is}}$ of ratio $\bar{\mu} \nu^2$ that contains $AB_{\mathrm{cs}}$. Clearly, $A'B_{\mathrm{is}} \subseteq L\subseteq AB_{\mathrm{cs}}$. Thus, by the previous example, $\bar{\mu} \nu^2 \geq \lambda$, implying that $\bar{\mu} \geq \frac{\lambda}{\nu^2}$, proving (a).

To prove (b), we may apply a similar consideration.
\end{proof}

\section{Maximal volume ellipsoids of revolution}\label{sec:ellipsoids}
First, as immediate corollaries to \Hrefs{Theorems}{thm:genfixed}{thm:genrotation}, we obtain John type necessary conditions for an ellipsoid to be the largest volume ellipsoid of revolution in a given convex body.
Next, as the main goal of the section we prove \Href{Theorem}{thm:ellfixedproperties}, and discuss the tightness of the results. Our proof strategy is to substitute $K$ with a suitable contact polytope; see \Href{Section}{sec:containment}.

\subsection{Contact points on the sphere}\label{subsec:notation_ellipsoids}

In \Hrefs{Theorems}{thm:genfixed}{thm:genrotation}, we phrased our John type conditions in terms of the contact pairs of the two bodies. Since ellipsoids are affine images of the ball, we will, in this section, phrase our John type conditions for ellipsoids of revolution in terms of unit vectors that are pre-images of contact points under this affine transformation.

As an immediate corollary of \Href{Theorem}{thm:genfixed}, we obtain the following answer to \Href{Problem}{prob:ellfixed}.

\begin{cor}[Necessary condition for John's problem for ellipsoids with a fixed axis]\label{cor:johncondition_ell_fixed_F}
Let the origin-centered ellipsoid $E = A \ball{d}$ be a solution to \Href{Problem}{prob:ellfixed}, that is
\probellfixed{} for a convex body $K$ in $\Red$. 
Then there are unit vectors $u_1, \dots, u_m$, and positive weights $\alpha_1, \dots, \alpha_m$ such that
$(A u_1, \inv{A} u_1), \dots, (A u_m, \inv{A} u_m)$ are contact pairs of $E$ and $K$, and
\[
    P_F \parenth{\sum\limits_{i \in [m]} \alpha_i u_i\otimes u_i} P_F = P_F;
    \quad   \sum\limits_{i \in [m]} \alpha_i u_i =\origin ; 
    \quad   \sum\limits_{i \in [m]} \alpha_i = d.
\]
\end{cor}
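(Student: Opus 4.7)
The plan is to derive this corollary directly from \Href{Theorem}{thm:genfixed} applied to the pair $(E, K)$, with $E$ playing the role of the inner body and $K$ playing the role of the outer body. The work consists of (i) verifying that the hypotheses of the theorem are met, (ii) normalizing the representation $E = A\ball{d}$, and (iii) translating the identities into the language of unit vectors $u_i$.

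First, I would argue that without loss of generality $A$ is symmetric positive definite. The ellipsoid $E$ depends on $A$ only through $AA^{\top}$. Since an $F$-operator is block-diagonal with respect to the splitting $\Red = F \oplus \ort{F}$, the class of $F$-operators is closed under transposition and under taking positive square roots of positive-definite members. Hence $M := (AA^{\top})^{1/2}$ is a positive-definite $F$-operator with $M\ball{d} = E$, so we may replace $A$ by $M$. Next, note that an ellipsoid of revolution with axis $F$ is exactly an element of $\apositions{\ball{d}}$, and in particular Problem \ref{prob:ellfixed} for $E$ and $K$ is a special case of Problem \ref{prob:genfixed} for $E$ and $K$ (since the composition of two $F$-operators is an $F$-operator, $\apositions{E} = \apositions{\ball{d}}$). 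Since $E$ is origin-centered and $E \subseteq K$, both bodies contain the origin in their interiors, so \Href{Theorem}{thm:genfixed} applies and yields contact pairs $(v_i, p_i)$ of $E$ and $K$ with positive weights $\alpha_i$ satisfying \eqref{eq:answerGLMPfixedsubspace_decomposition}--\eqref{eq:answerGLMPfixedsubspace_trace}.

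Finally, the translation. Write $v_i = Au_i$ with $u_i \in \Sedm$; since $A$ is symmetric positive definite, the outward unit normal to $E$ at $Au_i$ is proportional to $A^{-1}u_i$, and the normalization $\iprod{p_i}{v_i} = 1$ together with $\iprod{A^{-1}u_i}{Au_i} = \iprod{u_i}{u_i} = 1$ forces $p_i = A^{-1}u_i$. A direct computation gives
\[
 p_i \otimes v_i = (A^{-1}u_i)(Au_i)^{\top} = A^{-1}(u_i \otimes u_i) A,
\]
so $\sum_i \alpha_i p_i \otimes v_i = A^{-1}\bigl(\sum_i \alpha_i u_i \otimes u_i\bigr)A$. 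The key observation is that, because $A$ is an $F$-operator, it is block-diagonal with respect to $F \oplus \ort{F}$, and therefore commutes with $P_F$. Consequently \eqref{eq:answerGLMPfixedsubspace_decomposition} becomes $A^{-1}P_F\bigl(\sum_i \alpha_i u_i \otimes u_i\bigr) P_F A = P_F$, which, since $AP_F A^{-1} = P_F$, is equivalent to $P_F\bigl(\sum_i \alpha_i u_i \otimes u_i\bigr)P_F = P_F$. Applying $A$ to \eqref{eq:answerGLMPfixedsubspace_zerosum} turns it into $\sum_i \alpha_i u_i = \origin$, and \eqref{eq:answerGLMPfixedsubspace_trace} is unchanged by a similarity transformation and reads $\sum_i \alpha_i = d$.

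The only subtlety worth flagging is the reduction to symmetric $A$ and the commutation $AP_F = P_F A$; beyond those, the proof is purely bookkeeping applied on top of \Href{Theorem}{thm:genfixed}.
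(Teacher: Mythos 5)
Your proof is correct and follows essentially the same route as the paper, which simply observes that the normal to $A\ball{d}$ at $Au$ is $A^{-1}u$ and then invokes \Href{Theorem}{thm:genfixed}. The only difference is that you explicitly justify the reduction to symmetric positive definite $A$ and the commutation $AP_F=P_FA$, details the paper leaves implicit.
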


\begin{proof}
    Observe that at a boundary point $Au\in\bd A\ball{d}$, the corresponding normal vector is $A^{-1}u$. 
The statement now follows from \Href{Theorem}{thm:genfixed}.
\end{proof}

Note that a standard argument using Carathéodory's theorem for the space $\rotpos$ shows that the number $m$ of points needed in \Href{Corollary}{cor:johncondition_ell_fixed_F} is at most $2d^2 + d + 1$.

As an immediate corollary of \Href{Theorem}{thm:genrotation}, we obtain the following answer to \Href{Problem}{prob:ellrotation}.

\begin{cor}[Necessary condition for John's problem for ellipsoids with any axis] \label{cor:johncondition_ell_rotation}
Assume that $E=A \ball{d}$ is a solution to \Href{Problem}{prob:ellrotation}, that is \probellrotation{} for a convex body $K$ in $\Red$. Then there exist $m \leq 2d^2 + d + 1$ unit vectors $u_1, \dots, u_m$ and positive weights $\alpha_1, \dots, \alpha_m$ such that the conclusions of \Href{Corollary}{cor:johncondition_ell_fixed_F} hold as well as 
\[
  A \parenth{ \sum\limits_{i \in [m]} \alpha_i u_i\otimes u_i } A^{-1} = 
    \transpose{\parenth{A^{-1}}} \parenth{ \sum\limits_{i \in [m]} \alpha_i u_i\otimes u_i } \transpose{A}.
 \]
\end{cor}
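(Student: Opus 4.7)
The plan is to obtain this corollary as a direct consequence of \Href{Theorem}{thm:genrotation} applied to the convex bodies $E$ (playing the role of the inner body) and $K$ (playing the role of the outer body), with the subspace taken to be the axis $F$ of $E$. The crucial observation is that the family of ellipsoids of revolution with any $s$-dimensional axis coincides (up to translation) with $\rpositions{E}$: since the $F$-operators form a group under composition, any $F$-operator applied to $E = A\ball{d}$ (with $A$ itself an $F$-operator) produces another ellipsoid of revolution with axis $F$, and an arbitrary rotation $R \in O(d)$ can then move the axis to any other $s$-dimensional subspace $R(F)$. Hence $E$ being a solution to \Href{Problem}{prob:ellrotation} is equivalent to $E$ being a solution to \Href{Problem}{prob:genrotation} with the stated data, and \Href{Theorem}{thm:genrotation} applies directly.

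The next step is to identify the contact pairs of $K$ and $E$. By the polar decomposition I may choose the representation $E = A\ball{d}$ with $A$ symmetric positive definite (and it remains an $F$-operator). Then every boundary point of $E$ is $v = Au$ for a unit vector $u$, the outward normal to $E$ at $v$ is a positive multiple of $(AA^{\top})^{-1}v = A^{-\top}u = A^{-1}u$, and the normalization $\iprod{A^{-1}u}{Au}=\iprod{u}{u}=1$ is automatic; hence contact pairs are exactly $(Au_i, A^{-1}u_i)$ with $u_i \in \bd \ball{d}$ and $Au_i \in \bd K$, matching the desired form.

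It remains to translate the four identities \eqref{eq:answerGLMPfixedsubspace_decomposition}--\eqref{eq:answerGLMProtation} into conditions on $S := \sum_i \alpha_i u_i \otimes u_i$. Since $A$ (and $A^{-1}$) commutes with $P_F$ (as $A$ is an $F$-operator), identities \eqref{eq:answerGLMPfixedsubspace_decomposition}--\eqref{eq:answerGLMPfixedsubspace_trace} reduce to the three already appearing in \Href{Corollary}{cor:johncondition_ell_fixed_F}, exactly as in its proof. For the symmetry identity \eqref{eq:answerGLMProtation}, writing $p_i = A^{-\top}u_i$ and using the matrix form $(a\otimes b) = ab^{\top}$, one computes
\[
\sum_i \alpha_i p_i \otimes v_i = A^{-\top} S A^{\top}, \qquad \sum_i \alpha_i v_i \otimes p_i = A S A^{-1},
\]
so \eqref{eq:answerGLMProtation} becomes exactly the identity claimed in the corollary. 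The cardinality bound $m \leq 2d^2 + d + 1$ is a standard application of Carathéodory's theorem in the $(2d^2 + d)$-dimensional space $\rotpos$, as noted after \Href{Corollary}{cor:johncondition_ell_fixed_F}.

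Apart from this routine translation, the argument is a direct citation of \Href{Theorem}{thm:genrotation}; there is no real obstacle. The single conceptual point worth emphasizing is that the ``any-axis'' freedom in \Href{Problem}{prob:ellrotation} is captured in \Href{Theorem}{thm:genrotation} by taking $E$ itself (rather than $\ball{d}$) as the inner body, so that the rotations permitted by \Href{Problem}{prob:genrotation} act on an ellipsoid that already has the correct axis.
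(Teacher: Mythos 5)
Your proposal is correct and follows exactly the route the paper intends: the paper derives this corollary directly from Theorem~\ref{thm:genrotation} (after identifying the ellipsoids of revolution with any $s$-dimensional axis with $\rpositions{E}$, and the contact pairs with $(Au_i, A^{-\top}u_i)$), and your computation translating \eqref{eq:answerGLMProtation} into $A^{-\top}SA^{\top}=ASA^{-1}$ is the same as what the paper leaves implicit. You in fact supply more detail than the paper does — notably the group property of $F$-operators justifying $\rpositions{E}=\apositions[R(F)]{\ball{d}}$-type identifications and the polar-decomposition normalization making $p_i=A^{-1}u_i$ literally correct.
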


\begin{dfn}
We will call a position $K^\prime\in\apositions{K}$ with fixed axis $F$ of a convex body $K$ the \emph{John position with fixed axis $F$}, if the standard unit ball is a solution to John’s general problem with fixed axis $F$ for $K^\prime$ and $\ball{d}.$
\end{dfn}
In the case $F$ is the whole space, the John position with fixed $F$ corresponds to the standard John position of a convex body.

Using the matrix $A$ from \Href{Corollary}{cor:johncondition_ell_fixed_F}, we see that the position $\inv{A} K$ of $K$ is its John position with fixed axis $F$.
Throughout this section, we fix the corresponding unit vectors $u_1, \dots, u_m$ and positive weights $\alpha_1, \dots, \alpha_m$. Considering the vectors $P_F u_i$ as lying in $F$, we obtain:
\begin{equation}\label{eq:proj_contact_unit_vectors}
\sum\limits_{i \in [m]} \alpha_i P_F u_i\otimes P_F u_i = \id_F;
\quad \sum\limits_{i \in [m]} \alpha_i P_F u_i =\origin ;
\quad  \sum\limits_{i \in [m]} \alpha_i = d.
\end{equation}

As in problems concerning convex bodies in John position, we will study the corresponding ``outer contact polytopes'':

Denote
\[
K_{\text{J}} = \bigcap\limits_{i \in [m]}
\braces{x \in \Red \st \iprod{x}{u_i}\leq 1}.
\]

In the centrally symmetric case $K = -K$, define
\[
K_{\mathrm{J_{sym}}} = \bigcap\limits_{i \in [m]}
\braces{x \in \Red \st |{\iprod{x}{u_i}}|\leq 1}.
\]

\subsection{Inradius in \texorpdfstring{$\ort{F}$}{F orthogonal}}

\begin{lem}\label{lem:inradius_bound}
   Let $v_1,  \dots, v_m$ be vectors in $  \ball{n}.$
Assume there are positive weights $\beta_1, \dots, \beta_m$ such that 
\[
\sum\limits_{i \in [m]} \beta_i = 1, 
\quad 
\sum\limits_{i \in [m]} \beta_i v_i =\origin , 
\quad
\sum\limits_{i \in [m]} \beta_i \enorm{v_i}^2 = \theta
\text{ for some } \theta \in (0,1].
\]
Then the inradius of $\polarset{\parenth{\conv\{v_1, \dots, v_m\}}}$ is at most 
$\frac{1}{\theta}.$ 
\end{lem}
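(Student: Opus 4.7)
The plan is to reduce the inradius bound to a linear inequality on each vector $v_i$, take a convex combination of those inequalities using the weights $\beta_i$, and finally exploit the bound $\enorm{v_i}\leq 1$ to replace $\enorm{v_i}$ by $\enorm{v_i}^2$.

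First, let me fix notation and set up the inequalities. Let $P=\conv\{v_1,\dots,v_m\}$ and let $r$ be the inradius of $\polarset{P}$. Then there is a point $c$ such that $c+r\ball{n}\subseteq \polarset{P}$. By the definition of the polar and by computing the maximum of the linear functional $x\mapsto \iprod{x}{v_i}$ on the ball $c+r\ball{n}$, this containment is equivalent to
\[
\iprod{c}{v_i}+r\enorm{v_i}\leq 1 \qquad \text{for every } i\in[m].
\]

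Next, I would take the convex combination of these $m$ inequalities with weights $\beta_i$. Using $\sum\beta_i=1$ on the right and pulling $c$ out of the inner product on the left, the first-moment hypothesis $\sum\beta_i v_i=\origin$ kills the $c$-term, leaving
\[
r\sum_{i\in[m]}\beta_i\enorm{v_i}\leq 1.
\]
Finally, since $v_i\in\ball{n}$ we have $\enorm{v_i}\leq 1$, hence $\enorm{v_i}\geq \enorm{v_i}^2$, so $\sum\beta_i\enorm{v_i}\geq \sum\beta_i\enorm{v_i}^2=\theta$. Combining gives $r\theta\leq 1$, i.e.\ $r\leq 1/\theta$, which is the desired bound.

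I do not expect any serious obstacle: the only non-obvious ingredient is the elementary observation that the hypothesis $\enorm{v_i}\le 1$ allows one to pass from the first-moment quantity $\sum\beta_i\enorm{v_i}$ (which naturally arises from the support-function computation on the inscribed ball) to the second-moment quantity $\theta=\sum\beta_i\enorm{v_i}^2$ appearing in the statement. One minor care point is that the argument tacitly assumes the inradius is realized, i.e.\ that $\polarset{P}$ contains at least one ball of positive radius; if no ball is contained in $\polarset{P}$ then $r=0$ and the conclusion is trivial, and if $\polarset{P}$ is unbounded the same chain of inequalities still bounds the radius of any inscribed ball.
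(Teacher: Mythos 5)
Your argument is correct and coincides with the paper's proof: both derive the inequality $\iprod{c}{v_i}+r\enorm{v_i}\leq 1$ from the containment of the ball in $\polarset{P}$ (the paper tests the specific boundary point $c+r\,v_i/\enorm{v_i}$, you compute the support function, which is the same thing), then average with weights $\beta_i$ and pass from the first-moment sum $\sum\beta_i\enorm{v_i}$ to the second-moment sum $\theta$ via $\enorm{v_i}\leq 1$. The care point you raise at the end is handled identically in spirit in the paper, which also implicitly starts from an assumed inscribed ball $r\ball{n}+c$.
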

\begin{proof}
    Assume  $r \ball{n} + c \subset \polarset{\parenth{\conv\{v_1, \dots, v_m\}}}.$ This means that for every $i \in [m]$ with $v_i \neq 0,$ the point $c + r\frac{v_i}{\enorm{v_i}}$ is in $\polarset{\parenth{\conv\{v_1, \dots, v_m\}}}.$ Hence, 
    \[
    \iprod{v_i}{ c + r\frac{v_i}{\enorm{v_i}}} \leq 1 
        \quad \Longleftrightarrow \quad 
    \iprod{v_i}{c} + r\enorm{v_i}\leq 1  
    \quad \Longleftrightarrow \quad 
   r\enorm{v_i}  \leq 1 - \iprod{v_i}{c}.
    \]
 The latter inequality also holds if $v_i =\origin .$ Multiplying it by $\beta_{i}$ and summing, one gets
 \[
  r \parenth{\sum\limits_{i \in [m]} \beta_i \enorm{v_i}}
\leq \sum\limits_{i \in [m]} \beta_i -
 \iprod{\sum\limits_{i \in [m]} \beta_i v_i}{c}.
 \]
Since $\sum\limits_{i \in [m]} \beta_i v_i =\origin $ and $\sum\limits_{i \in [m]} \beta_i = 1,$ we get
\[
r \parenth{\sum\limits_{i \in [m]} \beta_i \enorm{v_i}}  \leq 1
\quad \Longleftrightarrow \quad 
r \leq \frac{1}{\sum\limits_{i \in [m]} \beta_i \enorm{v_i}}.
\]
Since $v_i \in \ball{n},$ we conclude that 
$\sum\limits_{i \in [m]} \beta_i \enorm{v_i} \geq 
\sum\limits_{i \in [m]} \beta_i {\enorm{v_i}^2} = \theta.$
The desired inequality follows.
\end{proof}

\begin{lem}
    \label{lem:inradius_in_F_perp}
    Under the notation of \Href{Subsection}{subsec:notation_ellipsoids}, with $s=\dim F$, the inradius of $K_{\text{J}} \cap \ort{F}$ satisfies
\[
  \inrad\! \parenth{ K \cap \ort{F}} \leq \frac{d}{d-s}\cdot\mathrm{radius}(E \cap \ort{F}).
\]

Additionally, in the case when $K=-K$, the inradius of $K_{\mathrm{J_{sym}}} \cap \ort{F}$ satisfies
\[
  \inrad\! \parenth{ K \cap \ort{F}} \leq  \sqrt{\frac{d}{d-s}}\cdot\mathrm{radius}(E \cap \ort{F}).
\]
\end{lem}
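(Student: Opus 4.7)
The plan is to pass to the John position with fixed axis $F$ so that $E=\ball{d}$ and $\mathrm{radius}(E\cap\ort F)=1$; the target bounds then become $\inrad(K\cap\ort F)\leq d/(d-s)$ in general and $\sqrt{d/(d-s)}$ in the centrally symmetric case. Since each $u_i$ is the outward unit normal of a supporting hyperplane of $K$ at a contact point, we have $K\subset K_{\mathrm{J}}$, and in the symmetric case $K\subset K_{\mathrm{J_{sym}}}$. Thus the inclusion $K\cap\ort F\subset K_{\mathrm{J}}\cap\ort F$ (resp.\ $K_{\mathrm{J_{sym}}}\cap\ort F$) reduces the task to bounding the inradius of the outer contact polytope intersected with $\ort F$.

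For any $y\in\ort F$ one has $\iprod{y}{u_i}=\iprod{y}{P_{\ort F}u_i}$, so
\[
K_{\mathrm{J}}\cap\ort F = \polarset{\conv\{P_{\ort F}u_1,\dots,P_{\ort F}u_m\}},
\]
where polarity is taken inside $\ort F$. I would then feed the vectors $v_i:=P_{\ort F}u_i$, which lie in the unit ball of $\ort F$, together with the weights $\beta_i:=\alpha_i/d$ into \Href{Lemma}{lem:inradius_bound} applied inside $\ort F\cong\Re^{d-s}$. The conditions $\sum_i\beta_i=1$ and $\sum_i\beta_i v_i=\origin$ are immediate from \eqref{eq:proj_contact_unit_vectors}. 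The key computation is
\[
\sum_i\alpha_i\norm{P_F u_i}^2 = \tr{\sum_i\alpha_i\,P_F u_i\otimes P_F u_i} = \tr{\id_F} = s,
\]
which, combined with $\norm{u_i}^2=1$ and $\sum_i\alpha_i=d$, gives $\sum_i\beta_i\norm{v_i}^2=(d-s)/d$. Then \Href{Lemma}{lem:inradius_bound} delivers the bound $d/(d-s)$.

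For the centrally symmetric case, $K_{\mathrm{J_{sym}}}\cap\ort F=\{y\in\ort F\st |\iprod{y}{P_{\ort F}u_i}|\leq 1\}$ is itself centrally symmetric in $\ort F$, so by the standard averaging $\tfrac{1}{2}B+\tfrac{1}{2}(-B)$ applied to any inscribed ball $B$, a largest inscribed Euclidean ball in $\ort F$ may be taken centered at the origin; its radius is $1/\max_i\norm{P_{\ort F}u_i}$. The same trace identity yields $\sum_i\alpha_i\norm{P_{\ort F}u_i}^2=d-s$, and together with $\sum_i\alpha_i=d$ this forces some index $i$ with $\norm{P_{\ort F}u_i}^2\geq(d-s)/d$, giving the bound $\sqrt{d/(d-s)}$.

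I expect the main obstacle to be precisely the trace identity $\sum_i\alpha_i\norm{P_F u_i}^2=s$, since identifying this scalar as $\tr{\id_F}$ is what allows both bounds to be extracted from a single line of \eqref{eq:proj_contact_unit_vectors}; once one has recognized $K_{\mathrm{J}}\cap\ort F$ (respectively $K_{\mathrm{J_{sym}}}\cap\ort F$) as the appropriate polar body (resp.\ slab body) inside $\ort F$ and invoked \Href{Lemma}{lem:inradius_bound} or the symmetry-centered averaging, the rest is routine bookkeeping.
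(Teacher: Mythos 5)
Your proof is correct and follows essentially the same route as the paper: both reduce to the John position, identify $K_{\mathrm{J}}\cap\ort{F}$ as the polar in $\ort{F}$ of the projected contact vectors, apply \Href{Lemma}{lem:inradius_bound} with $\beta_i=\alpha_i/d$ and $\theta=(d-s)/d$ via the trace identity $\sum_i\alpha_i\enorm{P_Fu_i}^2=s$, and in the symmetric case use an origin-centered inball together with the existence of some $j$ with $\enorm{P_{\ort{F}}u_j}^2\geq (d-s)/d$. The only cosmetic remark is that $\sum_i\alpha_i P_{\ort{F}}u_i=\origin$ comes from combining $\sum_i\alpha_i u_i=\origin$ of \Href{Corollary}{cor:johncondition_ell_fixed_F} with the second identity of \eqref{eq:proj_contact_unit_vectors}, not from \eqref{eq:proj_contact_unit_vectors} alone.
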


\begin{proof}
 The first identity in 
\eqref{eq:proj_contact_unit_vectors} yields $\sum\limits_{i \in [m]} \alpha_i \enorm{P_F u_i}^2 = s.$ Using here that $\sum\limits_{i \in [m]} \alpha_i = d$, we conclude $\sum\limits_{i \in [m]} \alpha_i \enorm{P_{\ort{F}}u_i}^2 = d - s.$

Since $\enorm{P_{\ort{F}}u_i} \leq \enorm{u_i} = 1$ for every $i \in [m]$, the vectors $P_{\ort{F}}u_1, \dots, P_{\ort{F}}u_m$ satisfy the assumptions of \Href{Lemma}{lem:inradius_bound} with  $\beta_1 = \frac{\alpha_1}{d}, \dots, \beta_m = \frac{\alpha_m}{d},$ and $\theta = \frac{d - s}{d}.$ The general bound follows.

In the centrally symmetric case, we know that the origin is the center of an inball. Hence,
the inradius cannot exceed the distance from the origin to a hyperplane 
$\braces{x \in \ort{F} \st \iprod{P_{\ort{F}}u_i}{x} = 1}$ in $\ort{F}$ for some non-zero $P_{\ort{F}}u_i.$ This distance is exactly $\frac{1}{\enorm{P_{\ort{F}}u_i}}.$ Identities \eqref{eq:answerGLMPfixedsubspace_decomposition} and \eqref{eq:answerGLMPfixedsubspace_trace} imply that $\enorm{P_{\ort{F}}u_j}^2 \geq \frac{d - s}{d}$ for some $j \in [m].$ The bound follows.
\end{proof}

\subsection{Volume ratio in \texorpdfstring{$F$}{F}}\label{sec:volratio}

The proofs of our volume bounds follow very closely the proof of Theorem~1.1 of \cite{AlonsoGutirrez2021}, where the authors study upper bounds on the volume of a section of a convex body in John position. The authors of \cite{AlonsoGutirrez2021} describe the corresponding section using the projections of the unit vectors that form the John decomposition in $\Re^d$. In the rest of the proof in \cite{AlonsoGutirrez2021}, only algebraic identities \eqref{eq:proj_contact_unit_vectors} and Ball's normalized version of the Brascamp--Lieb inequality \cite{ball1989volumes} are used.

\begin{lem}[Brascamp--Lieb inequality]\label{lem:balls_Brascamp_Lieb}
Let unit vectors $v_1,\ldots,v_m\in\Re^n$ and positive numbers $\beta_1,\ldots,\beta_m$ satisfy
\[
\sum\limits_{i\in[m]} \beta_i v_i\otimes v_i=\id_{\Re^n} .
\]
Then
\begin{equation}\label{eq:BrascampLieb}
\int_{\Re^n}\prod_{i\in[m]}
f_i(\iprod{x}{v_i})^{\beta_i} \di x 
\leq \prod_{i\in[m]}\left(\int_{\Re}f_i\right)^{\beta_i},
\end{equation}
holds for all integrable functions $f_1,\ldots,f_m\colon\Re\to[0,\infty).$
\end{lem}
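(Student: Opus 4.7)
The plan is to present Barthe's mass-transport proof of Ball's normalised Brascamp--Lieb inequality. By a standard approximation argument I may assume each $f_i$ is smooth and strictly positive, and by homogeneity I may also assume $\int_{\Re} f_i \di t = 1$, so that each $f_i$ is a probability density. Taking the trace of the identity $\sum_{i\in[m]} \beta_i\, v_i\otimes v_i = \id_{\ReN}$ yields $\sum_{i\in[m]} \beta_i = n$. Let $\gamma(t) = (2\pi)^{-1/2} e^{-t^2/2}$ and $G_n(y) = (2\pi)^{-n/2} e^{-\enorm{y}^2/2}$ be the standard Gaussian densities on $\Re$ and $\ReN$. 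For each $i\in[m]$ let $T_i\colon\Re\to\Re$ be the unique increasing smooth bijection pushing the measure $f_i\di t$ forward to $\gamma \di t$; equivalently, $f_i(s) = T_i'(s)\gamma(T_i(s))$.

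I would then introduce the transport map $\Psi\colon\ReN\to\ReN$ given by
\[
\Psi(x) = \sum_{i\in[m]} \beta_i\, T_i(\iprod{x}{v_i})\, v_i, \qquad D\Psi(x) = \sum_{i\in[m]} \beta_i\, T_i'(\iprod{x}{v_i})\, v_i\otimes v_i.
\]
Since $D\Psi(x)$ is positive definite everywhere, $\Psi$ is the gradient of a strictly convex potential and is therefore injective. The argument then reduces to two pointwise inequalities. The first is \emph{Ball's determinantal inequality}: for positive reals $\lambda_1,\dots,\lambda_m$,
\[
\det\parenth{\sum_{i\in[m]} \beta_i \lambda_i\, v_i\otimes v_i} \geq \prod_{i\in[m]} \lambda_i^{\beta_i}.
\]
To prove it, let $V$ be the $n\times m$ matrix with columns $\sqrt{\beta_i}\, v_i$, so that $V\transpose{V} = \id_n$. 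The Cauchy--Binet formula expresses the left-hand side as $\sum_S w_S \prod_{i\in S} \lambda_i$, where $S$ ranges over $n$-element subsets of $[m]$ and the weights $w_S = \det(V_S)^2$ satisfy $\sum_S w_S = \det(V\transpose{V}) = 1$. Weighted AM--GM then produces the lower bound $\prod_i \lambda_i^{\sum_{S\ni i} w_S}$, and the identity $\sum_{S\ni i} w_S = \beta_i$ is obtained by differentiating $\log\det(V\diag(t)\transpose{V})$ with respect to $t_i$ at $t=\mathbf{1}$ and using $V\transpose{V} = \id_n$.

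The second ingredient is a \emph{Gaussian comparison}: since $V\transpose{V} = \id_n$, the Gram matrix $\transpose{V}V \in \Re^{m\times m}$ is an orthogonal projection, and setting $s_i = \sqrt{\beta_i}\, T_i(\iprod{x}{v_i})$ gives $\enorm{\Psi(x)}^2 = \transpose{s}\transpose{V}V s \leq \enorm{s}^2 = \sum_{i\in[m]} \beta_i\, T_i(\iprod{x}{v_i})^2$; combined with $\sum \beta_i = n$, this yields the pointwise bound $\prod_i \gamma(T_i(\iprod{x}{v_i}))^{\beta_i} \leq G_n(\Psi(x))$. Combining the two inequalities with the pushforward identity $f_i = T_i' \cdot (\gamma\circ T_i)$ produces $\prod_i f_i(\iprod{x}{v_i})^{\beta_i} \leq \det D\Psi(x) \cdot G_n(\Psi(x))$; integrating and applying the change of variables $y = \Psi(x)$ gives
\[
\int_{\ReN} \prod_{i\in[m]} f_i(\iprod{x}{v_i})^{\beta_i}\, \di x \leq \int_{\Psi(\ReN)} G_n(y)\, \di y \leq 1,
\]
which is the desired inequality \eqref{eq:BrascampLieb} in the normalised case. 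The main obstacle will be Ball's determinantal inequality; identifying $\sum_{S\ni i} w_S = \beta_i$ through the derivative computation is the one substantive step. The Gaussian comparison, by contrast, is a pleasantly direct consequence of the fact that $V\transpose{V} = \id_n$ forces $\transpose{V}V$ to be an orthogonal projection.
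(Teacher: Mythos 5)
Your proposal is a correct and essentially complete rendition of Barthe's mass-transportation proof of the normalised (geometric) Brascamp--Lieb inequality. The paper itself does not prove this lemma at all --- it is quoted as Ball's normalised version of Brascamp--Lieb with a citation to \cite{ball1989volumes} --- so there is no in-paper argument to compare against; what you supply is the now-standard transport proof rather than Ball's original approach. All the key steps check out: the trace identity gives $\sum_i \beta_i = n$; the matrix $V$ with columns $\sqrt{\beta_i}\,v_i$ satisfies $V\transpose{V}=\id_n$, so Cauchy--Binet plus weighted AM--GM yields Ball's determinantal inequality once you identify $\sum_{S\ni i} w_S = \beta_i$ (your derivative of $\log\det(V\diag(t)\transpose{V})$ at $t=\mathbf{1}$ does this correctly, since $\partial_{t_i}\log\det = \tr\parenth{(V\transpose{V})^{-1}V E_{ii}\transpose{V}} = \beta_i\enorm{v_i}^2=\beta_i$); and $\transpose{V}V$ being an orthogonal projection gives $\enorm{\Psi(x)}^2\le\sum_i\beta_i T_i(\iprod{x}{v_i})^2$, hence the Gaussian comparison. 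Two small points you should make explicit if you write this up in full: (i) the reduction to smooth, strictly positive densities requires a limiting argument (e.g.\ convolve with a Gaussian and add $\epsilon\gamma$, then use monotone/dominated convergence), during which the transport maps $T_i$ are genuinely $C^1$ by the inverse function theorem applied to the cumulative distribution functions; and (ii) the injectivity of $\Psi$ follows because $\Psi=\nabla\Phi$ with $\Phi(x)=\sum_i\beta_i S_i(\iprod{x}{v_i})$, $S_i'=T_i$, and $D\Psi\succ 0$ pointwise (indeed $D\Psi\succeq(\min_i T_i')\,\id$ by the decomposition of the identity), so $\Phi$ is strictly convex. With those details filled in, the chain $\prod_i f_i(\iprod{x}{v_i})^{\beta_i}\le\det D\Psi(x)\cdot G_n(\Psi(x))$ followed by the change of variables $y=\Psi(x)$ is airtight and gives the lemma.
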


\begin{lem}\label{lem:vr_sym_fixed_subsp_John}
Under the notation of \Href{Subsection}{subsec:notation_ellipsoids}, with $s=\dim F$, the inequality
\[
\parenth{\frac{\vol[s]{K_{\mathrm{J_{sym}}} \cap F}}{\volnf[s]{\cube^s}}}^\frac{1}{s}  \leq 
\sqrt{\frac{d}{s}}
\]
holds in the centrally symmetric case, $K=-K$.
\end{lem}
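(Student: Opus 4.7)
The plan is to adapt the Brascamp--Lieb slicing argument used in \cite{AlonsoGutirrez2021} for sections of bodies in John position: everything will be driven by the projected decomposition \eqref{eq:proj_contact_unit_vectors}, which plays the role inside $F$ that a standard John decomposition plays in the ambient space.

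First, I would set $w_i = P_F u_i$, $r_i = \enorm{w_i} \in [0,1]$, drop the indices with $r_i = 0$ (which impose no constraint on $K_{\mathrm{J_{sym}}} \cap F$), and introduce unit vectors $\tilde{w}_i = w_i / r_i \in F$ together with weights $\beta_i = \alpha_i r_i^2$. Substituting $P_F u_i = r_i \tilde{w}_i$ into \eqref{eq:proj_contact_unit_vectors} and taking the trace yields
\[
\sum_i \beta_i \tilde{w}_i \otimes \tilde{w}_i = \id_F \qquad \text{and} \qquad \sum_i \beta_i = s.
\]
For $x \in F$ one has $\iprod{x}{u_i} = r_i \iprod{x}{\tilde{w}_i}$, hence
\[
\vol[s]{K_{\mathrm{J_{sym}}} \cap F} = \int_F \prod_i \indicator{[-1/r_i,\,1/r_i]}(\iprod{x}{\tilde{w}_i})^{\beta_i} \di x,
\]
since raising each indicator to the positive power $\beta_i$ leaves it unchanged. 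Applying \Href{Lemma}{lem:balls_Brascamp_Lieb} inside $F \cong \Re^s$ then bounds the integral by $\prod_i (2/r_i)^{\beta_i} = 2^s \prod_i r_i^{-\beta_i}$, and since $\volnf[s]{\cube^s} = 2^s$, the lemma reduces to proving $\prod_i r_i^{-\beta_i} \leq (d/s)^{s/2}$.

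Setting $t_i = r_i^2$ so that $\beta_i = \alpha_i t_i$, this remaining inequality becomes $-\sum_i \alpha_i t_i \log t_i \leq s \log(d/s)$, which I would derive from Jensen's inequality for the concave function $t \mapsto -t \log t$ with the probability weights $\alpha_i / d$, using $\sum_i \alpha_i = d$ and $\sum_i \alpha_i t_i = s$ to identify the mean as $s/d$. Taking $s$-th roots then gives the stated bound. The only nontrivial step is recognizing the substitution $\beta_i = \alpha_i r_i^2$; once that is in place the hypothesis of Brascamp--Lieb matches the projected John identity \eqref{eq:proj_contact_unit_vectors} exactly, and the remaining estimate collapses to a single application of Jensen's inequality.
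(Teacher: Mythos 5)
Your argument is correct and is essentially the paper's own proof: the same normalization $\tilde w_i = P_Fu_i/\enorm{P_Fu_i}$ with weights $\beta_i=\alpha_i\enorm{P_Fu_i}^2$, the same indicator functions of $[-1/r_i,1/r_i]$ fed into \Href{Lemma}{lem:balls_Brascamp_Lieb}, and the same Jensen step for $t\mapsto -t\log t$ with weights $\alpha_i/d$. The only difference is that you spell out the reduction to $\prod_i r_i^{-\beta_i}\leq (d/s)^{s/2}$ and the handling of indices with $P_Fu_i=\origin$, which the paper leaves to the reader.
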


\begin{proof}
Let $I \subset [m]$ be the set of those indices $i\in[m]$ for which $P_Fu_i \neq 0$.  
Set
$v_i = \frac{P_F u_i}{\enorm{P_F u_i}}$ and
$\beta_i = {\alpha_i}{\enorm{P_F u_i}^2}$ for all $i \in I.$ 
Then
\[
\sum_{i\in I} \beta_i v_i\otimes v_i = \sum_{i\in[m]} \alpha_i P_F u_i\otimes P_F u_i = \id_F.
\]
Let $f_i$ denote the indicator function of the interval 
$
 \left[-\frac{1}{\enorm{P_F u_i}}, \frac{1}{\enorm{P_F u_i}}\right]
$ 
for each $i \in I$. 

We leave it to the reader to check that the integral on the left-hand side of \eqref{eq:BrascampLieb} is equal to the volume of $K_{\mathrm{J_{sym}}} \cap F$, as we identify $F$ with $\Re^n$ (where $n=s$), and that the product of the integrals on the right-hand side of \eqref{eq:BrascampLieb} is equal to 
$2^s \prod\limits_{i\in I}  \enorm{P_F u_i}^{-\alpha_i \enorm{P_F u_i}^2}.$ The latter quantity is at most $2^s \parenth{\frac{d}{s}}^{\frac{s}{2}}$ by Jensen's inequality, completing the proof of \Href{Lemma}{lem:vr_sym_fixed_subsp_John}.
\end{proof}

\begin{lem}\label{lem:vr_gen_fixed_subsp_John}
Under the notation of \Href{Subsection}{subsec:notation_ellipsoids}, with $s=\dim F$, the inequality
\[
\parenth{\frac{\vol[s]{K_{\text{J}} \cap F}}{\volnf[s]{\Delta_J^s}}}^\frac{1}{s}  \leq
\frac{1}{(s + 1)^{\frac{d - s}{2s(d + 1)}}} 
\sqrt{ \frac{d(d + 1)}{s(s + 1)} }
\]
holds.
\end{lem}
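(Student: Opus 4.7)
The plan is to follow closely the strategy of \cite{AlonsoGutirrez2021}, adapting the approach of \Href{Lemma}{lem:vr_sym_fixed_subsp_John} to the non-symmetric setting by combining the Brascamp--Lieb inequality of \Href{Lemma}{lem:balls_Brascamp_Lieb} with the classical Ball lifting trick, which reduces one-sided half-space constraints to a symmetric Brascamp--Lieb application in one extra dimension.

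Set $w_i := P_F u_i$, so that $K_{\text{J}} \cap F = \{x \in F : \iprod{x}{w_i} \leq 1 \text{ for all } i\}$, and lift to $\Re^{s+1} = F \oplus \Re$ by defining $\tilde w_i := (w_i, 1/\sqrt{d})$. A direct block computation using all three identities \eqref{eq:proj_contact_unit_vectors} yields $\sum_i \alpha_i \tilde w_i \otimes \tilde w_i = \id_{s+1}$ together with the useful by-product $\sum_i \alpha_i \tilde w_i = (\origin, \sqrt{d})$. Normalizing to unit vectors $v_i := \tilde w_i/\enorm{\tilde w_i}$ with weights $\beta_i := \alpha_i \enorm{\tilde w_i}^2$, one obtains $\sum_i \beta_i v_i \otimes v_i = \id_{s+1}$ and $\sum_i \beta_i = s+1$.

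Apply \Href{Lemma}{lem:balls_Brascamp_Lieb} in $\Re^{s+1}$ with test functions $f_i(t) = e^{-t/\enorm{\tilde w_i}}\indicator{[0,\infty)}(t)$, chosen so that the exponent appearing in $\prod_i f_i(\iprod{y}{v_i})^{\beta_i}$ collapses to $-\sqrt{d}\cdot y_{s+1}$ via $\sum_i \alpha_i \tilde w_i = (\origin, \sqrt{d})$, and the integrand is supported on the cone $C := \{y : \iprod{y}{\tilde w_i} \geq 0\text{ for all } i\}$. Writing $y = (x,t)$ with $x \in F$, the centering identity $\sum_i \alpha_i w_i = \origin$ forces the $t$-slice of $C$ to be empty for $t \leq 0$ and to equal $-(t/\sqrt{d})(K_{\text{J}} \cap F)$ for $t > 0$. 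A Fubini argument combined with $\int_0^\infty t^s e^{-\sqrt{d}\, t}\,dt = s!\,d^{-(s+1)/2}$ converts the Brascamp--Lieb inequality into
\[
\vol[s]{K_{\text{J}} \cap F} \leq \frac{d^{s+1/2}}{s!}\,\prod_i \enorm{\tilde w_i}^{\alpha_i \enorm{\tilde w_i}^2}.
\]

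The main obstacle lies in sharply upper-bounding this product. A naive Jensen estimate using only the trace identity $\sum_i \alpha_i \enorm{\tilde w_i}^2 = s+1$ and the pointwise bound $\enorm{\tilde w_i}^2 \leq 1+1/d$ falls short of the claimed constant. Mimicking the refined analysis of \cite{AlonsoGutirrez2021}, which combines the convexity of $x \mapsto x \log x$ with the full matrix identity $\sum_i \alpha_i w_i \otimes w_i = \id_F$ (not merely its trace), should produce the sharp factor $(s+1)^{-(d-s)/(2s(d+1))}\sqrt{d(d+1)/(s(s+1))}$. Substituting the explicit formula $\vol[s]{\Delta_J^s} = s^{s/2}(s+1)^{(s+1)/2}/s!$ and taking $s$-th roots then yields the stated inequality.
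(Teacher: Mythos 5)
Your overall strategy --- Ball's lifting to $F\times\Re$ followed by the Brascamp--Lieb inequality and an optimization of the resulting product --- is exactly the paper's, and your computation of the left-hand side of \eqref{eq:BrascampLieb} as $s!\,d^{-(2s+1)/2}\vol[s]{K_{\text{J}}\cap F}$ is correct. The argument is nevertheless incomplete, and at the decisive step it cannot be repaired in the form you set it up. First, the upper bound on $\prod_i\enorm{\tilde w_i}^{\alpha_i\enorm{\tilde w_i}^2}$ is where all the difficulty of the lemma is concentrated (the paper devotes \Href{Appendix}{sec:major_lemma} to it, via an extreme-point analysis of the constraint polytope and the weighted Karamata inequality), and you do not prove it; you only assert that a refined analysis ``should'' yield the sharp factor. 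Your diagnosis of where the refinement comes from is also off: neither the paper nor \cite{AlonsoGutirrez2021} uses the matrix identity beyond what you have already used; the gain comes from the pointwise constraints on the individual variables together with majorization.

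Second, and more seriously, with your normalization $\tilde w_i=(P_Fu_i,1/\sqrt{d})$ the product bound you would need is false. Take $d=2$, $s=1$, $K=[-1,1]^2$, $E=\ball{2}$, $F$ a coordinate axis, with contact data $u_1=-u_2=e_1\in F$, $u_3=-u_4=e_2\in\ort{F}$ and all $\alpha_i=1/2$; this satisfies every identity in \eqref{eq:proj_contact_unit_vectors} (and indeed the full matrix identity in $F$). Your chain of inequalities requires $\prod_i\enorm{\tilde w_i}^{\alpha_i\enorm{\tilde w_i}^2}\le 2^{-1/6}\sqrt{3/2}\approx 1.091$, but for this configuration the product equals $(3/2)^{3/4}\,2^{-1/4}\approx 1.140$. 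The source of the loss is the indices with $P_Fu_i=\origin$: they contribute nothing to $K_{\text{J}}\cap F$ yet consume weight in your lift. This is exactly why the paper restricts to $I=\braces{i\st P_Fu_i\ne\origin}$, sets $d'=\sum_{i\in I}\alpha_i$, rescales the lift to $\widetilde q_i=\parenth{-\sqrt{d'/(d'+1)}\,P_Fu_i,\,1/\sqrt{d'+1}}$ with weights $\widetilde\alpha_i=\frac{d'+1}{d'}\alpha_i$, proves the product bound \eqref{eq:vr_product_bound} with $d'$ in place of $d$ via \Href{Lemma}{lem:vr_convex_func_max}, and only at the very end passes from $d'$ to $d$ by monotonicity. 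You need to incorporate these steps for the argument to close.
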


\begin{proof}
Similarly to Ball's argument \cite{ball1991volume}, we lift the picture to 
$\widetilde{F}=F\times \Re$ and will write 
$(a,t)$ for a vector in $\widetilde{F}$ with $a \in F$ and 
$t \in \Re.$

Let $I \subset [m]$ denote the subset of indices $i\in[m]$ for which $P_F u_i \neq 0$.
Denote $d^\prime = \sum\limits_{i \in I} \alpha_i;$ and set
\[
\widetilde{q}_i=
\parenth{-\sqrt{\frac{d^\prime}{d^\prime+1}} P_F u_i, 
\frac{1}{\sqrt{d^\prime+1}}}, \quad \text{and } \quad \widetilde{\alpha}_i=\frac{d^\prime+1}{d^\prime}\alpha_i.
\]
Set
$v_i = \frac{\widetilde{q}_i}{\enorm{\widetilde{q}_i}}$ and
$\beta_i = {\widetilde{\alpha}_i}{\enorm{\widetilde{q}_i}^2}$ for all $i \in I.$ 
By straightforward computation,
\[
\sum\limits_{i\in I} \beta_i v_i\otimes v_i = 
\sum\limits_{i\in I}
\widetilde{\alpha}_i\widetilde{q}_i\otimes\widetilde{q}_i = \id_{\widetilde{F}}.
\]
We will denote the function $ t \mapsto
e^{-\frac{t}{\enorm{\widetilde{q}_i}}} \indicator{[0, +\infty)}
$ by $f_i$
for each $i \in I, $ where $\indicator{S}$ stands for the indicator function of the set $S.$
Then, the product of the integrals on the right-hand side of \eqref{eq:BrascampLieb} is equal to 
$ \prod\limits_{i\in I}  \enorm{P_F u_i}^{\alpha_i \enorm{P_F u_i}^2}.$
The bound 
\begin{equation}
\label{eq:vr_product_bound}
   \prod\limits_{i\in I}  \enorm{P_F u_i}^{\alpha_i \enorm{P_F u_i}^2}\leq
\frac{(s+1)^{\frac{s+1}{2(d^\prime +1)}}}{(d^\prime + 1)^{\frac{d^\prime + 1}{2(d^\prime + 1)}}} 
\end{equation}

follows from the standard majorization argument for the convex function $t \mapsto t \ln t$ on $(0,1)$.
This bound was obtained in the proof of Theorem~1.1 in \cite{AlonsoGutirrez2021}.
For the sake of completeness, we provide the proof as a separate lemma in 
\Href{Appendix}{sec:major_lemma}.

We claim that the integral on the left-hand side of \eqref{eq:BrascampLieb} is
\[
\int_{\tilde{F}}\prod_{i\in I}
f_i(\iprod{x}{v_i})^{\beta_i} \di x =
\frac{s!}{(d^\prime+1)^{\frac{s+1}{2}} 
{d^\prime}^{\frac{s}{2}}}  \vol[s]{K_{\text{J}} \cap F}.
\]
Indeed, set $f(x)=\prod\limits_{i\in I}
f_i(\iprod{x}{v_i})^{\beta_i}$ for $x \in \tilde{F}$.
The support of $f$ is the cone
$\poscone{\braces{\parenth{\frac{K_{\text{J}} \cap F}{\sqrt{d^\prime}},1}}}$. 
Now, $f$ is constant $e^{- \tau{\sqrt{d^\prime+1}}}$ at the intersection of the hyperplane $H_\tau = \braces{(y,t) \in \widetilde{F} \st t = \tau}$ and this cone for any positive $\tau$. 
By monotonicity in $d^\prime$, the proof of \Href{Lemma}{lem:vr_gen_fixed_subsp_John} is complete.
\end{proof}
\begin{rem}
The authors of \cite{AlonsoGutirrez2021} used a different approach in the general case:
they first lift the ``contact vectors'' to $\Red \times \Re$ and then project the configuration onto $F \times \Re.$ This minor variation in the argument does not substantially affect the subsequent computations.
\end{rem}

\subsection{Proof of \Hreftitle{Theorem}{thm:ellfixedproperties}}

A transformation of the space by an $F$-matrix changes both the volume form on $F$ and the distance in $\ort{F}$ proportionally. Hence, under the notation of \Href{Subsection}{subsec:notation_ellipsoids}, it suffices to bound the volume of $\inv{A} K \cap F$ and the inradius of $\inv{A} K \cap \ort{F}$ to obtain the corresponding bound.

By construction, $\inv{A} K \subset K_{\text{J}}$ and $\inv{A} K \subset K_{\mathrm{J_{sym}}}$ in the centrally symmetric case $K = -K.$ The corresponding volume bounds follow from 
\Href{Lemma}{lem:vr_sym_fixed_subsp_John} and \Href{Lemma}{lem:vr_gen_fixed_subsp_John}; 
the inradius bound follows from \Href{Lemma}{lem:inradius_in_F_perp}.  

The inclusion $E \cap F \subset K \cap F \subset d \parenth{E \cap F}$ follows from \Href{Theorem}{thm:GLMP_inclusion_in_F} with $z=\origin$ and $P_F E = E \cap F \subset K \cap F.$

Let us now establish the inclusion
\[
E \cap F \subset K \cap F \subset \sqrt{d} \parenth{E \cap F}
\]
in the centrally symmetric case $K = -K$.  
Fix $x \in K_{\mathrm{J_{sym}}} \cap F.$ One has
\[
\iprod{x}{x} \stackrel{\eqref{eq:proj_contact_unit_vectors}}{=}
\sum\limits_{i \in [m]} \alpha_i \iprod{x}{P_F u_i} \iprod{P_F u_i}{x}
= \sum\limits_{i \in [m]} \alpha_i
\iprod{P_F x}{u_i} \iprod{u_i}{P_F x} \stackrel{x \in F}{=}
\]
\[
\sum\limits_{i \in [m]} \alpha_i
\iprod{x}{u_i} \iprod{u_i}{x} =
\sum\limits_{i \in [m]} \alpha_i \iprod{x}{u_i}^2 \leq \sum\limits_{i \in [m]} \alpha_i = d.
\]
The desired bound follows. The proof of \Href{Theorem}{thm:ellfixedproperties} is complete.

\subsection{Tightness of the results}\label{sec:volratiotight}
The inclusions of \Href{Theorem}{thm:ellfixedproperties} are optimal since they are already optimal for the John ellipsoid as shown by the simplex and the cube. The volume bounds of \Href{Lemma}{lem:vr_sym_fixed_subsp_John} and 
\Href{Lemma}{lem:vr_gen_fixed_subsp_John} coincide with the corresponding bounds for an 
$s$-dimensional section of a $d$-dimensional convex body in John position (see Theorem 1.1 of \cite{AlonsoGutirrez2021}). In the centrally symmetric case $K = -K,$ we conjecture the following:
\begin{conj}\label{conj:volrat_John_rev_cube_conjecture}
  Let an origin-centered ellipsoid $E$ be a solution to \Href{Problem}{prob:ellfixed} for a centrally symmetric convex body $K$ and a linear subspace $F$ in $\Red$ of dimension $s\in[d]$. 
Then 
\[
\frac{\vol[s]{K \cap F}}{\vol[s]{E \cap F}} \leq C_{\cube}(d,s)
\frac{\volnf[s]{\cube^s}}{\volnf[s]{\ball{s}}},
\]
where the constant $C_{\cube}(d,s)$ is given by
\[
C^2_{\cube}(d,s)=\left\lceil\frac{d}{s}\right\rceil^{d-s\lfloor d/s\rfloor}\left\lfloor\frac{d}{s}\right\rfloor^{s-(d-s\lfloor d/s\rfloor)}.
\]
\end{conj}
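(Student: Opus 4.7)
The plan is to mimic the proof of \Href{Lemma}{lem:vr_sym_fixed_subsp_John} and then to sharpen the elementary inequality used at the very last step. After applying an appropriate $F$-operator, it suffices to treat the case $E=\ball{d}$. By \Href{Corollary}{cor:johncondition_ell_fixed_F} together with the central symmetry of $K$, there are unit contact vectors $u_1,\dots,u_m\in\Se^{d-1}$, arranged in $\pm$-pairs, and positive weights $\alpha_1,\dots,\alpha_m$ satisfying
\[
\sum_{i}\alpha_i = d,\qquad P_F\!\parenth{\sum_i \alpha_i\, u_i\otimes u_i}P_F = P_F,
\]
together with the cross- and $\ort{F}$-block relations forced by $\enorm{u_i}=1$ and the John-type optimality of $\ball{d}$.

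Precisely as in the proof of \Href{Lemma}{lem:vr_sym_fixed_subsp_John}, set $c_i=\enorm{P_F u_i}^2\in(0,1]$ and $\beta_i=\alpha_i c_i$, and apply Brascamp--Lieb on $F$ with unit vectors $P_F u_i/\enorm{P_F u_i}$, weights $\beta_i$, and the indicator functions of the intervals $[-1/\enorm{P_F u_i},1/\enorm{P_F u_i}]$ to obtain
\[
\vol[s]{K\cap F}\ \leq\ 2^s\prod_{i} c_i^{-\alpha_i c_i/2}.
\]
The conjecture would follow from the purely elementary estimate $\prod_i c_i^{-\alpha_i c_i}\leq (k+1)^r k^{s-r}$ with $k=\lfloor d/s\rfloor$ and $r=d-sk$, proved over all length profiles $(\alpha_i,c_i)$ that can arise from a genuine John decomposition of revolution.

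I would attempt that last estimate via a reduction to an integer optimization. The idea is to use the $\ort{F}$-rigidity of the John decomposition (the cross block $\sum_i\alpha_i P_F u_i\otimes P_{\ort{F}}u_i$ must vanish and the $\ort{F}$-block must equal $\id_{\ort{F}}$) to show that the contact vectors may be regrouped according to the lines $\spann{\braces{P_F u_i}}\subset F$ they determine, in such a way that in the extremal configuration these lines form an orthogonal basis of $F$ and the aggregate weight $M_j$ attached to the $j$-th line is a positive integer with $\sum_{j\in[s]}M_j=d$. The Brascamp--Lieb bound then specializes to $\prod_{j\in[s]}M_j$, which under $\sum_j M_j=d,\ M_j\in\Ze_{>0}$ is maximized by the most balanced choice $M_j\in\{k,k+1\}$, giving $(k+1)^r k^{s-r}$. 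Equality is witnessed by the explicit polytope $K=\bigcap_{j,k}\braces{x\in\Red \st \enorm{\iprod{x}{u_{j,k}}}\leq 1}$ with $P_F u_{j,k}=(1/\sqrt{M_j})e_j$ and the $\ort{F}$-components of $u_{j,k}$ tuned to cancel the cross-block.

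The main obstacle is precisely this integrality/orthogonality reduction. Brascamp--Lieb on $F$ is completely insensitive to the $\ort{F}$-data of the decomposition, and Jensen's inequality applied directly to $\prod_i c_i^{-\alpha_i c_i}$ yields only $(d/s)^s$, which is strictly larger than $C_{\cube}^2(d,s)$ whenever $s\nmid d$; moreover the Jensen-extremum $c_i=s/d$ is compatible with every constraint that lives only inside $F$. Hence any proof of the conjecture must make essential use of the full John rigidity in $\Red$, not only its restriction to $F$. Identifying the correct quantization statement for the admissible lengths $\enorm{P_F u_i}$ coming from a genuine ellipsoid-of-revolution John decomposition in a centrally symmetric body is, in my view, the crux of the problem.
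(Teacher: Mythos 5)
The statement you are trying to prove is labelled a \emph{Conjecture} in the paper, and the paper offers no proof of it; there is therefore no ``paper proof'' to compare against. What the paper does prove (Lemma~\ref{lem:vr_sym_fixed_subsp_John}, feeding into Theorem~\ref{thm:ellfixedproperties}~\ref{item:volboundsym}) is the weaker bound $\sqrt{d/s}$, obtained exactly by the Brascamp--Lieb step you carry out followed by Jensen, and the paper explicitly flags $C_\cube(d,s)$ as the conjecturally sharp replacement, attained on particular $s$-dimensional sections of the cube (citing Lemma~2.2 of~\cite{ivanov2020volumel}).

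Your proposal correctly recovers this starting point and correctly identifies why it cannot close: Brascamp--Lieb applied inside $F$, together with the in-$F$ trace identity $\sum_i\alpha_i\enorm{P_Fu_i}^2=s$ and $\sum_i\alpha_i=d$, is consistent with the equal-lengths profile $c_i\equiv s/d$, at which the product equals $(d/s)^s$; so when $s\nmid d$ the conjectured bound strictly beats anything obtainable from the $F$-data alone. That diagnosis is accurate. However, the step you then propose --- that the genuine John decomposition of a centrally symmetric body forces the projected contact lines to become an orthogonal basis of $F$ with \emph{integer} aggregate weights $M_j$ summing to $d$ in the extremal configuration --- is not established, and it does not follow from the constraints $\sum\alpha_ip_i=\origin$, the vanishing of the $F\times\ort F$ cross block, or $P_{\ort F}(\sum\alpha_iu_i\otimes u_i)P_{\ort F}$ having any particular form. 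Nothing in Corollary~\ref{cor:johncondition_ell_fixed_F} or in John's classical condition produces integrality of weights; John decompositions generically have irrational weights, and regrouping projections by the lines they span need not produce orthogonal lines, let alone ones carrying integer total weight. You acknowledge this yourself (``the main obstacle is precisely this integrality/orthogonality reduction''), so the proposal is an honest research sketch rather than a proof: the genuinely new ingredient --- a quantization statement ruling out the balanced profile $c_i=s/d$ when $s\nmid d$, using the full $\Red$-rigidity --- is exactly what the conjecture is asking for, and it remains open.
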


The constant $C_{\cube}(d,s)$ in \Href{Conjecture}{conj:volrat_John_rev_cube_conjecture}
is attained on certain sections of the cube $\cube^d=[-1,1]^d$ by $s$-dimensional subspaces
(see Lemma 2.2 of \cite{ivanov2020volumel}). Peculiarly, in the case when $K$ is centrally symmetric and $s$ divides $d,$ 
both our volume ratio bound in $F$ and the inradius bound in $\ort{F}$ are simultaneously tight: 
$K$ is the cube $[-1,1]^d;$ $E$ is the John ellipsoid $\ball{d}$ of $K;$
$F$ is the solution set to the system of linear equations:
\[
\iprod{x}{e_{\frac{d}{s}(i-1) +1}} = \ldots =
\iprod{x}{e_{\frac{d}{s}i}},  
\quad \text{for all} \quad  i \in [s].
\]

Moreover, the bound $\sqrt{\frac{d}{d-s}}$ on the inradius in $\ort{F}$ 
is attained on certain sections of the cube $\cube^d.$ Indeed, denoting the standard basis of $\Red$ by $u_1, \dots, u_d,$ the estimate $\sqrt{\frac{d}{d-s}}$ is attained whenever 
$\enorm{P_{\ort{F}} u_1}^2= \dots = \enorm{P_{\ort{F}} u_d}^2 = \frac{d-s}{d}.$ The existence of subspaces with this property was proven in Lemma 2.4 of \cite{ivanov2020volume}, in which the set of all possible lengths of the projections was described.

The general case of a not necessarily symmetric body is more intricate. 
As shown in \cite{dirksen2017sections}, the bound in \Href{Lemma}{lem:vr_gen_fixed_subsp_John} is asymptotically tight for a regular simplex $\Delta_J^d$ in the John position. Consequently, it is asymptotically tight in our case. 
However, we believe that there is an intrinsic problem in obtaining the best possible bound using the Brascamp--Lieb inequality directly: if one chooses the parameters to maximize the functional in \Href{Lemma}{lem:vr_convex_func_max}, then the Brascamp--Lieb inequality is not optimal for the corresponding functions. That is, there are geometric obstructions to achieving the equality case in the Brascamp--Lieb inequality.

We also have an example showing that the bound 
\Href{Theorem}{thm:ellfixedproperties} \ref{item:inradius} on the inradius is the best possible. It is technical, and we state and prove it in \Href{Theorem}{thm:tightness_inrad_bound}.
Nevertheless, the corresponding vectors in $\Re^{d-s}$ can be lifted to unit vectors satisfying the algebraic equations of \Href{Corollary}{cor:johncondition_ell_fixed_F}.
A more detailed analysis of \Href{Lemma}{lem:inradius_bound} shows that the bound can be achieved, if certain vectors are close to zero and the others lie near the unit sphere, so it cannot be achieved via the projections of the unit vectors from the standard John condition for $F = \Red$ (see \cite{Ball2009}).

Finally, \cite{ivanov2022functional} by M. Naszódi and G. Ivanov may be interpreted as studying \probgenfixed{} $F$ for $\ball{n+1}$ and $K$ being a certain not necessarily convex set in $\Re^{n+1}$ associated with a log-concave function on $\Re^n$, and $F$ being $\Re^n.$ 
Moreover, the algebraic conditions for the “contact points” in that case coincide with those of \Href{Corollary}{cor:johncondition_ell_fixed_F}; and the more general case considered in \cite{ivanovIMRN} fits within the framework of \Href{Theorem}{thm:genfixed}
for appropriately chosen parameter sets. The inclusions for ellipsoids and ``contact-type'' polytopes have been studied previously in the functional setting as well; see, for example, \cite[Lemma 4.1]{ivanov2024johninclusionlogconcavefunctions}.

\section{Uniqueness of the solution}\label{sec:uniqueness}

We note that in general, the solution of {\probellrotation} is not unique: consider the convex body $K=E+\left[-v,v\right]$, where $E$ is an $F$-ellipsoid for some $s$-dimensional subspace $F$ and $v\in \ort{F}$. On the other hand, we have the following theorem. 

For a linear subspace $F$ and a subset $K$ of $\Red$, we denote by $\mathcal{S}_F(K)$ the set of those elements of $\apositions{K}$, for which the corresponding linear transformation is positive definite.

\begin{thm}\label{thm:translate}
For any convex bodies $K,L$ in $\Red$, and for any elements $K_1, K_2 \in \mathcal{S}_F(K)$ of maximal volume satisfying $K_1, K_2 \subseteq L$, $K_1$ and $K_2$ are translates of each other.
\end{thm}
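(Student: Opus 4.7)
The plan is to adapt the classical uniqueness-up-to-translation argument for the John ellipsoid, with Minkowski's determinant inequality playing the central role. Write the two optima as $K_1 = A_1 K + z_1$ and $K_2 = A_2 K + z_2$, where $A_1, A_2$ are positive definite (hence self-adjoint) $F$-operators. Maximality of both forces $\det A_1 = \det A_2 =: \delta$, and I would like to show that in fact $A_1 = A_2$; the translation conclusion is then immediate.

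The core step is to interpolate linearly. For $\lambda \in (0,1)$, define $A_\lambda = \lambda A_1 + (1-\lambda) A_2$ and $z_\lambda = \lambda z_1 + (1-\lambda) z_2$. Using the block-diagonal description of an $F$-operator (with an $s \times s$ block $M_1$ and a scalar block $\mu \id_{d-s}$), one sees that the class of positive definite $F$-operators is convex: it is parametrized by the open cone of symmetric positive definite $s \times s$ matrices together with the positive ray in $\mu$. Hence $A_\lambda$ is again a positive definite $F$-operator, so $A_\lambda K + z_\lambda \in \mathcal{S}_F(K)$. Containment in $L$ is immediate from convexity of $L$:
\[
A_\lambda k + z_\lambda = \lambda(A_1 k + z_1) + (1-\lambda)(A_2 k + z_2) \in \lambda K_1 + (1-\lambda) K_2 \subseteq L, \quad k \in K.
\]

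Applying Minkowski's determinant inequality for symmetric positive definite matrices then yields
\[
\det(A_\lambda)^{1/d} \geq \lambda \det(A_1)^{1/d} + (1-\lambda)\det(A_2)^{1/d} = \delta^{1/d},
\]
with equality if and only if $A_1$ and $A_2$ are proportional. Since $\vol{A_\lambda K + z_\lambda} = \det(A_\lambda)\vol{K}$ cannot exceed $\vol{K_1} = \delta\,\vol{K}$ by maximality, equality must hold throughout; combined with $\det A_1 = \det A_2$ this forces the proportionality constant to be $1$, so $A_1 = A_2$, and consequently $K_2 = K_1 + (z_2 - z_1)$. I do not anticipate a serious obstacle: the only point worth checking carefully is the convexity of the class of positive definite $F$-operators, which is transparent from the block description. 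The hypothesis of positive definiteness is essential, since the full class of $F$-operators fails to be convex, and the rotational factor inherent in $\rposition$ would likewise destroy the linear interpolation — this reflects the standard observation that John-type uniqueness holds up to translation precisely when one restricts to positive definite representatives within each affine orbit.
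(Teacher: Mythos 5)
Your proposal is correct and follows essentially the same route as the paper: both take a convex combination of the two optimal positive definite $F$-operators, observe that the resulting position still lies in $L$ and in the admissible class, and invoke the strict log-concavity of the determinant on positive definite matrices (you via Minkowski's inequality, the paper via $\det\frac{A+B}{2}\geq\sqrt{\det A\det B}$ with equality iff $A=B$) to conclude $A_1=A_2$. The two determinant inequalities are interchangeable here, so there is no substantive difference.
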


\begin{proof}
Consider the family of pairs $(S, x )$, where $S$ is a positive semidefinite $F$-operator, $x \in \Red$ satisfying $x+S(K) \subseteq L$. We regard $(S,x)$ as a point of $\Re^{d^2+d}$. It is easy to see that the fa\-mi\-ly of pairs $(S,x)$ with the above condition is a compact, convex subset of $\Re^{d^2+d}$, and thus, $\vol[d]{x+S(K)}=\det(S) \volnf[d]{K}$ attains its maximum on it.
Note also that if $x+S(K)$ maximizes $\vol[d]{x+S(K)}$ on this set, then $S$ is positive definite, as otherwise $\vol[d]{x+S(K)} = 0$.

Consider some positive definite $F$-operators $S_1, S_2$ and vectors $x_1,x_2 \in \Red$ such that for $i=1,2$, $K_i=x_i+S_i(K)$ maximize volume.
By convexity, we have that $\bar{K}=\frac{x_1+x_2}{2} + \frac{1}{2} (S_1+S_2)(K) \subseteq L$. On the other hand, it is well-known that for any two positive definite matrices $A,B$,
\[
\det \frac{A+B}{2} \geq \sqrt{\det A \cdot \det B}
\]
with equality if and only if $A=B$ (see e.g. \cite[Corollary 7.6.8]{horn2012matrix}).
Since $\frac{1}{2} (S_1+S_2)$ is also a positive definite $F$-operator, it follows from the maximality of volume that $S_1=S_2$, implying that $K_1$ and $K_2$ are translates of each other.
\end{proof}

\begin{rem}
We note that $\mathcal{S}_F(\ball{d}) = \apositions{\ball{d}}$ for every linear subspace $F$, by the polar decomposition of matrices.
\end{rem}

\begin{rem}
It is not true that if $K_1, K_2 \in \mathcal{S}_F(K)$ are of maximal volume satisfying $K_1, K_2 \subseteq L$, then the translation vector moving $K_1$ to $K_2$ is necessarily parallel to $\ort{F}$, as shown by the following example. Set $d=2$ and $K=\ball{2}$. We define $L$ as an $\origin$-symmetric rectangle containing $\ball{2}$ such that the sides of $L$ are parallel to the lines $y = \pm x$, and the only points of $\bd{L}$ on $\ball{2}$ are the points $\pm \left( \frac{1}{\sqrt{2}}, \frac{1}{\sqrt{2}}\right)$. Let $F$ be the $y$-axis. Then an elementary computation shows that the only $\origin$-symmetric $F$-ellipse inscribed in $L$ is $\ball{2}$, and thus, this is of maximal area. On the other hand, moving this ellipse along the line $y=-x$ yields other maximal area $F$-ellipses in $L$.
\end{rem}

\section{\texorpdfstring{$F$}{F}-revolution L\"owner ellipsoids}\label{sec:lowner}

A dual construction to the largest volume ellipsoid contained in a convex body
is the smallest volume ellipsoid containing the body, usually called the \emph{L\"owner ellipsoid}. Notably, the necessary and sufficient
conditions for the Euclidean unit ball to be the L\"owner ellipsoid coincide with the
conditions in John’s characterization of the largest volume ellipsoid.
The extension of the notion of the L\"owner ellipsoid is straightforward -- swap the bodies in \probgenfixed{} $F$, that is, consider \probgenfixed{} $F$ for $K$ and $\ball{d}.$

\begin{dfn}
We will call a position $K^\prime\in\apositions{K}$ with fixed axis $F$ of a convex body $K$ the \emph{L\"owner position with fixed axis $F$} if $K^\prime$ is a solution to \probgenfixed{} $F$ for $K$ and $\ball{d}.$
\end{dfn}

The following result is a dual version of \Href{Theorem}{thm:ellfixedproperties}:
\begin{thm}\label{thm:ellLownerfixedproperties}
  Let a convex body $K$ be in the L\"owner position with fixed axis $F$ of dimension $s\in\{0,1,\dots,d\}$.
    Then 
\begin{enumerate}[label=({\arabic*})]
\item\label{item:outervolbound}
the following outer volume bound holds:
\[
    \parenth{\frac{\volnf[s]{ P_FK}}{\volnf[s]{\Delta_L^s}}}^\frac{1}{s} \geq
\sqrt{\frac{s}{d}},
\]
where $\Delta_L^s$ stands for the regular $s$-dimensional simplex inscribed in $\ball{s}$;
\item\label{item:lownerinclusion}
the following inclusion holds:
\[
P_F K \subset P_F \ball{d} \subset d \cdot P_F K;
\]
    \item\label{item:circumradius}
    the circumradius of $P_{\ort{F}} K$ is at least
$
 \sqrt{\frac{d-s}{d}}.
$
\end{enumerate}

If, additionally, $K$ is centrally symmetric, then
\begin{enumerate}[label=({S\arabic*})]
\item\label{item:outervolboundsym}
the following outer volume bound holds:
\[
    \parenth{\frac{\volnf[s]{P_F K}}{\volnf[s]{\crosp^s}}}^\frac{1}{s} \geq
\sqrt{\frac{s}{d}},
\]
where $\crosp^s$ stands for the standard cross-polytope;
\item\label{item:inclusionlownersym}
the following inclusion holds:
\[
P_F K \subset P_F \ball{d} \subset \sqrt{d} \cdot P_F K.
\]
\end{enumerate}
\end{thm}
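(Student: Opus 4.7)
The plan is to mirror the proof of \Href{Theorem}{thm:ellfixedproperties}, replacing Ball's Brascamp--Lieb inequality (\Href{Lemma}{lem:balls_Brascamp_Lieb}) by Barthe's reverse Brascamp--Lieb inequality and applying the latter to the \emph{inner} contact polytope rather than to an outer one. First, I would derive a L\"owner analogue of \Href{Corollary}{cor:johncondition_ell_fixed_F}. By definition, a body $K$ in L\"owner position with fixed axis $F$ is a largest volume element of $\apositions{K}$ contained in $\ball{d}$, so \Href{Theorem}{thm:genfixed} applies with $L = \ball{d}$. Since the only outward normal to $\ball{d}$ at a boundary point $v$ is $v$ itself, every contact pair must satisfy $p_i = v_i$ and $\enorm{v_i} = 1$. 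Substituting $p_i = v_i$ into \eqref{eq:answerGLMPfixedsubspace_decomposition}--\eqref{eq:answerGLMPfixedsubspace_trace} produces positive weights $\alpha_1,\dots,\alpha_m$ and unit vectors $v_1,\dots,v_m \in \bd{K} \cap \Sedm$ such that $P_F \parenth{\sum_i \alpha_i v_i \otimes v_i} P_F = P_F$, $\sum_i \alpha_i v_i = \origin$, and $\sum_i \alpha_i = d$.

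For part \ref{item:circumradius}, taking the trace of the first identity on $F$ gives $\sum_i \alpha_i \enorm{P_F v_i}^2 = s$, hence $\sum_i \alpha_i \enorm{P_{\ort{F}} v_i}^2 = d - s$; combined with $\sum_i \alpha_i P_{\ort{F}} v_i = \origin$, the expansion $\sum_i \alpha_i \enorm{P_{\ort{F}} v_i - c}^2 = (d-s) + d \enorm{c}^2 \geq d - s$ holds for every $c \in \ort{F}$, so some $P_{\ort{F}} v_i \in P_{\ort{F}} K$ has distance at least $\sqrt{(d-s)/d}$ from $c$, which yields the circumradius bound. For inclusion \ref{item:lownerinclusion}, given $x \in \ball{d} \cap F$ the coefficients $\gamma_i = \alpha_i (1 - \iprod{v_i}{x})$ are non-negative (because $\iprod{v_i}{x} \leq \enorm{v_i} \cdot \enorm{x} \leq 1$), sum to $d$, and satisfy $\sum_i \gamma_i P_F v_i = -x$; thus $-x/d \in \conv{\{P_F v_i\}} \subset P_F K$, and the symmetry of $\ball{d} \cap F$ in $F$ upgrades this to $\ball{d} \cap F \subset d \cdot P_F K$. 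For the sharper symmetric inclusion \ref{item:inclusionlownersym}, applying the first identity to $x \in \ball{d} \cap F$ gives $x = \sum_i \alpha_i \iprod{v_i}{x} P_F v_i$ and $\sum_i \alpha_i \iprod{v_i}{x}^2 = \enorm{x}^2 \leq 1$; Cauchy--Schwarz together with $\sum_i \alpha_i = d$ then yields $\sum_i \alpha_i \enorm{\iprod{v_i}{x}} \leq \sqrt{d}$, and because $\pm v_i$ are both contact vectors in the symmetric case, $x$ becomes a non-negative combination of points of $P_F K$ of total mass at most $\sqrt{d}$, so $x \in \sqrt{d}\cdot P_F K$.

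The volume bounds \ref{item:outervolbound} and \ref{item:outervolboundsym} are the main obstacle and proceed in close analogy with \Hrefs{Lemmas}{lem:vr_sym_fixed_subsp_John}{lem:vr_gen_fixed_subsp_John}. Since $\conv{\{P_F v_i\}} \subset P_F K$ and, in the symmetric case, $\conv{\{\pm P_F v_i\}} \subset P_F K$, it suffices to lower-bound the volumes of these inner contact polytopes; the unit vectors $w_i = P_F v_i / \enorm{P_F v_i}$ with weights $\beta_i = \alpha_i \enorm{P_F v_i}^2$ form a standard John decomposition of $F$ with $\sum_i \beta_i = s$. In the symmetric case I would invoke Barthe's reverse Brascamp--Lieb inequality on this decomposition with test functions $f_i(\tau) = e^{-|\tau|/\enorm{P_F v_i}}$: after the change of variables $r_i = \alpha_i \enorm{P_F v_i} \tau_i$, the infimum $\inf\braces{\sum_i \beta_i |\tau_i|/\enorm{P_F v_i} : \sum_i \beta_i \tau_i w_i = x}$ becomes the Minkowski functional of $\conv{\{\pm P_F v_i\}}$, so using $\int e^{-\|x\|_V}\di x = s!\, \vol[s]{V}$ for any centrally symmetric convex body $V$, the left-hand side of Barthe's inequality equals $s!\, \vol[s]{\conv{\{\pm P_F v_i\}}}$ while the right-hand side equals $2^s \prod_i \enorm{P_F v_i}^{\alpha_i \enorm{P_F v_i}^2}$. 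Jensen's inequality applied to the convex function $y \log y$ with weights $\alpha_i/d$ and values $y_i = \enorm{P_F v_i}^2$, using $\sum_i \alpha_i \enorm{P_F v_i}^2 = s$ and $\sum_i \alpha_i = d$, delivers $\prod_i \enorm{P_F v_i}^{\alpha_i \enorm{P_F v_i}^2} \geq (s/d)^{s/2}$---the reverse of the majorization underlying \eqref{eq:vr_product_bound}---giving exactly the factor $\sqrt{s/d}$ needed for \ref{item:outervolboundsym}. For the general case \ref{item:outervolbound}, I would first lift the projected contact vectors to $F \times \Re$ by the $\widetilde q_i$ construction from the proof of \Href{Lemma}{lem:vr_gen_fixed_subsp_John} and then apply Barthe's inequality to half-exponential densities $e^{-t/\enorm{\widetilde q_i}}\indicator{[0,\infty)}$, turning the left-hand side into a multiple of $\vol[s]{\conv{\{P_F v_i\}}}$ and, via the same Jensen estimate on the lifted norms, producing the stated bound in terms of $\volnf[s]{\Delta_L^s}$.
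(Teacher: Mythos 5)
Your proposal is correct and, for the substantive parts --- the circumradius bound and the two outer volume bounds --- it follows essentially the same route as the paper: the Löwner analogue of \Href{Corollary}{cor:johncondition_ell_fixed_F} (which is the paper's \Href{Corollary}{cor:lownercondition_ell_fixed_F}), the quadratic expansion $\sum_i \alpha_i \enorm{P_{\ort{F}}v_i - c}^2 = (d-s) + d\enorm{c}^2$ for the circumradius, and Barthe's reverse Brascamp--Lieb inequality applied to the projected (resp.\ lifted) contact vectors with exponential test functions and a Jensen estimate on $\prod_i \enorm{P_F v_i}^{\alpha_i\enorm{P_F v_i}^2}$. (Your lower bound $(s/d)^{s/2}$ for that product is the right one.)

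The one place you genuinely diverge is in the inclusions \ref{item:lownerinclusion} and \ref{item:inclusionlownersym}. The paper obtains them by polarity: it observes that the inner contact polytope $K_{\mathrm{L}}$ is the polar of the outer contact polytope $K_{\mathrm{J}}$ (and similarly in the symmetric case), so the inclusions dualize directly from \ref{item:inclusion} and \ref{item:inclusionsym} of \Href{Theorem}{thm:ellfixedproperties}. You instead redo the computation on the Löwner side: writing $-x = \sum_i \alpha_i(1-\iprod{v_i}{x})P_F v_i$ for $x\in\ball{d}\cap F$ and using the symmetry of $\ball{d}\cap F$, and in the symmetric case combining $x = \sum_i \alpha_i\iprod{v_i}{x}P_F v_i$ with Cauchy--Schwarz. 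Both arguments are valid and of comparable length; the paper's duality route has the advantage of reusing \Href{Lemma}{lem:inclusion_contact_polytopes} verbatim, while yours is self-contained on the Löwner side and makes transparent why the constants $d$ and $\sqrt{d}$ reappear unchanged under polarity.
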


We will prove this theorem in the current section. The proof mostly consists of standard arguments dual to those used in the proof of \Href{Theorem}{thm:ellfixedproperties}. The only surprise is that the bound \ref{item:circumradius} on the circumradius does not require $K$ to be symmetric!

\subsection{Notation}\label{subsec:notation_lowner_ellipsoids}

As a direct corollary of \Href{Theorem}{thm:genfixed}, we obtain:
\begin{cor}\label{cor:lownercondition_ell_fixed_F}
  Let a convex body $K$ be in L\"owner position with fixed axis $F.$
  Then there exist $m \leq 2d^2 + d + 1$, unit vectors $u_1, \dots, u_m$, and positive weights $\alpha_1, \dots, \alpha_m$ such that 
  $( u_1, u_1), \dots, ( u_m,  u_m)$ are contact pairs of $K$ and $\ball{d}$, and
\[
    P_F \parenth{\sum\limits_{i \in [m]} \alpha_i u_i\otimes u_i} P_F = P_F;
    \quad   \sum\limits_{i \in [m]} \alpha_i u_i =\origin ; 
    \quad   \sum\limits_{i \in [m]} \alpha_i = d.
\]
\end{cor}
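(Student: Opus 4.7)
The plan is to deduce Corollary~\ref{cor:lownercondition_ell_fixed_F} as a direct consequence of Theorem~\ref{thm:genfixed}, together with a standard Carathéodory reduction. Since $K$ is in the Löwner position with fixed axis $F$, the body $K$ itself is a largest-volume member of $\apositions{K}$ contained in $L := \ball{d}$. Hence the hypotheses of \Href{Theorem}{thm:genfixed} are satisfied with the convex bodies $K \subseteq \ball{d}$; note that $\ball{d}$ contains the origin in its interior automatically, and $K$ does as well because $K$ has non-empty interior and, being optimal for a solution problem involving $\ball{d}$, can be translated so that the origin lies in $\inter K$ without changing the optimality argument.

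Applying \Href{Theorem}{thm:genfixed} yields contact pairs $(v_1, p_1), \ldots, (v_m, p_m)$ of $\ball{d}$ and $K$ and positive weights $\alpha_1, \ldots, \alpha_m$ satisfying the identities \eqref{eq:answerGLMPfixedsubspace_decomposition}, \eqref{eq:answerGLMPfixedsubspace_zerosum}, \eqref{eq:answerGLMPfixedsubspace_trace}. The key observation is now the following identification: since $L = \ball{d}$ is self-polar, we have $v_i \in \bd K \cap \bd \ball{d}$ and $p_i \in \bd \polarset{K} \cap \bd \polarset{\ball{d}} = \bd \polarset{K} \cap \Sedm$. Thus both $v_i$ and $p_i$ are unit vectors satisfying the normalization $\iprod{p_i}{v_i} = 1$. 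By the equality case of the Cauchy--Schwarz inequality,
\[
1 = \iprod{p_i}{v_i} \leq \enorm{p_i}\cdot\enorm{v_i} = 1
\]
forces $v_i = p_i$. Setting $u_i := v_i = p_i$, we obtain unit vectors with $u_i \in \bd K \cap \Sedm$, and the contact pair condition is exactly that $(u_i, u_i)$ is a contact pair of $K$ and $\ball{d}$ as stated.

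Substituting $p_i \otimes v_i = u_i \otimes u_i$ into \eqref{eq:answerGLMPfixedsubspace_decomposition}--\eqref{eq:answerGLMPfixedsubspace_trace} gives the three displayed identities of the corollary verbatim. It remains to bound $m$. This is the standard Carathéodory reduction: the three identities express a fixed element of the $(2d^2 + d)$-dimensional space $\rotpos$ (namely $(P_F, \origin, d)$ in suitable coordinates, encoding the required projected operator identity, the zero-sum condition, and the trace condition) as a non-negative combination of the elements $\alpha_i\,(u_i \otimes u_i,\, u_i \otimes u_i,\, u_i)$; Carathéodory's theorem in $\rotpos$ then allows us to reduce to at most $\dim \rotpos + 1 = 2d^2 + d + 1$ non-zero terms, exactly as in the proof of \Href{Corollary}{cor:johncondition_ell_fixed_F}.

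The main (and essentially only) conceptual step is the Cauchy--Schwarz collapsing of the contact pair; the rest is transcription of Theorem~\ref{thm:genfixed} and a routine Carathéodory argument. A minor technical point to handle carefully is ensuring that one may shift the origin into $\inter K$ without affecting the resulting identities -- but as noted in the discussion preceding \Href{Lemma}{lem:goodcenter} in Section~\ref{sec:containment}, the identities of \Href{Theorem}{thm:genfixed} are invariant (with the appropriate rescaling of the $p_i$'s) under such translation, so the statement is independent of that choice.
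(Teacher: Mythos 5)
Your proof is correct and matches the paper's intended argument: the corollary is stated there as a direct consequence of \Href{Theorem}{thm:genfixed}, and the whole content is exactly your observation that for $L=\ball{d}$ the self-polarity of the ball together with the normalization $\iprod{p_i}{v_i}=1$ forces $p_i=v_i$ to be a unit vector, plus the standard Carath\'eodory reduction in the $(2d^2+d)$-dimensional space $\rotpos$. The only wobble is your remark about translating $K$ so that $\origin\in\inter{K}$ — a translation of $K$ is not actually available here, since the outer body is the origin-centered ball and moving $K$ would destroy the containment — but this technicality does not affect the substance of the argument and the paper passes over the same point in silence.
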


In the case where $F$ is the whole space, the L\"owner position with fixed axis $F$ coincides with the standard L\"owner position of a convex body. 

Throughout this section, we fix the corresponding unit vectors $u_1, \dots, u_m$ and positive weights $\alpha_1, \dots, \alpha_m$. Considering the vectors $P_F u_i$ as lying in $F$, we observe that they satisfy equations \eqref{eq:proj_contact_unit_vectors}.

As in problems concerning convex bodies in the John position, we will study the corresponding ``contact polytopes'':

Define
\[
K_{\text{L}} =  \conv\braces{u_1, \dots, u_m}.
\]

In the centrally symmetric case $K = -K$, define
\[
K_{\mathrm{L_{sym}}} = \conv\braces{\pm u_1, \dots, \pm u_m}.
\]

\subsection{Circumradius in \texorpdfstring{$\ort{F}$}{F orthogonal}}

\begin{lem}\label{lem:circumradius_bound}
   Let $v_1,  \dots, v_m$ be vectors in $\Re^n.$
   Assume that there are positive weights $\beta_1, \dots, \beta_m$ such that 
\[
\sum\limits_{i \in [m]} \beta_i = 1, 
\quad 
\sum\limits_{i \in [m]} \beta_i v_i =\origin , 
\quad
\sum\limits_{i \in [m]} \beta_i \enorm{v_i}^2 = \theta.
\]
Then the circumradius of $\conv\{v_1, \dots, v_m\}$ is at least 
$\sqrt{\theta}.$ 
\end{lem}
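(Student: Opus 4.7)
The plan is to take the smallest enclosing ball of $\conv\{v_1,\dots,v_m\}$, say with center $c$ and radius $R$, and exploit the mean-zero condition $\sum_i \beta_i v_i = \origin$ by averaging $\|v_i - c\|^2$ against the weights $\beta_i$. Since each $v_i$ lies in the enclosing ball, we have $\|v_i - c\|^2 \leq R^2$ for every $i\in[m]$, and therefore
\[
R^2 \;=\; R^2 \sum_{i\in[m]}\beta_i \;\geq\; \sum_{i\in[m]}\beta_i\,\|v_i - c\|^2.
\]

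Next I would expand the right-hand side:
\[
\sum_{i\in[m]}\beta_i\,\|v_i-c\|^2
= \sum_{i\in[m]}\beta_i\,\|v_i\|^2
- 2\iprod{\sum_{i\in[m]}\beta_i v_i}{c}
+ \|c\|^2 \sum_{i\in[m]}\beta_i.
\]
The first sum equals $\theta$ by assumption, the middle term vanishes because $\sum_i \beta_i v_i = \origin$, and the final sum equals $1$. Consequently,
\[
R^2 \;\geq\; \theta + \|c\|^2 \;\geq\; \theta,
\]
which gives the desired inequality $R \geq \sqrt{\theta}$.

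There is no real obstacle here; the argument is the direct polar analogue of \Href{Lemma}{lem:inradius_bound} (which bounded an inradius from above via contact half-spaces). Here we bound a circumradius from below, and rather than probing with extremal boundary points of a polar polytope we simply use the enclosing ball itself and the variance-type identity above. A side observation worth noting is that the bound is tight precisely when $c = \origin$ and $\|v_i\| = \sqrt{\theta}$ for every $i$ carrying positive weight, i.e., when all $v_i$ with $\beta_i > 0$ lie on a common sphere of radius $\sqrt{\theta}$ centered at the origin.
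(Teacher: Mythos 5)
Your argument is correct and is essentially identical to the paper's own proof: both take an enclosing ball with center $c$ and radius $R$, average $\enorm{v_i-c}^2$ against the weights $\beta_i$, and expand using $\sum_i\beta_i=1$, $\sum_i\beta_i v_i=\origin$, and $\sum_i\beta_i\enorm{v_i}^2=\theta$ to get $R^2\geq\theta+\enorm{c}^2\geq\theta$. Nothing to add.
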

\begin{proof}
    Assume $r \ball{n} + c \supset \conv\{v_1, \dots, v_m\}.$ This means that 
    $\enorm{c - v_i}^2 \leq r^2$ for every $i \in [m].$ Multiplying both sides by $\beta_{i}$ and summing over $i$, we get:
 \[
  r^2 \parenth{\sum\limits_{i \in [m]} \beta_i } \geq 
 \sum\limits_{i \in [m]} \beta_i \enorm{c - v_i}^2 =
   \enorm{c}^2 \parenth{\sum\limits_{i \in [m]} \beta_i} -
   2 \iprod{c}{ \sum\limits_{i \in [m]} \beta_i v_i} + \sum\limits_{i \in [m]} \beta_i \enorm{v_i}^2.
 \]
Since $\sum\limits_{i \in [m]} \beta_i v_i =\origin $ and $\sum\limits_{i \in [m]} \beta_i =1$, we obtain
\[
r^2 \geq \sum\limits_{i \in [m]} \beta_i \enorm{v_i}^2 + \enorm{c}^2 \geq \sum\limits_{i \in [m]} \beta_i \enorm{v_i}^2 = \theta.
\]
The desired inequality follows.
\end{proof}

Combining \eqref{eq:proj_contact_unit_vectors} with the above lemma with $v_i=P_{F^\perp}u_i$ and $\beta_i=\alpha_i/d$, and by identifying $F^\perp$ with $\Re^n$ (where $n=d-s$), we conclude the following:
\begin{lem}
    \label{lem:circumradius_in_F_perp}
    Under the notation of \Href{Subsection}{subsec:notation_lowner_ellipsoids}, the circumradius of $P_{\ort{F}} K_{L}$ is at least
$
 \sqrt{\frac{d-s}{d}}.
$ 
\end{lem}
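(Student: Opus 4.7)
The plan is to invoke \Href{Lemma}{lem:circumradius_bound} in $\ort{F}$, taking $v_i = P_{\ort{F}} u_i$ and $\beta_i = \alpha_i/d$, after verifying its three hypotheses. First I would use the normalization $\sum_{i \in [m]} \alpha_i = d$ from \eqref{eq:proj_contact_unit_vectors} to get $\sum_{i \in [m]} \beta_i = 1$. Next, from \Href{Corollary}{cor:lownercondition_ell_fixed_F} we have $\sum_{i \in [m]} \alpha_i u_i = \origin$ (not merely the $F$-projected version), so projecting onto $\ort{F}$ gives $\sum_{i \in [m]} \beta_i v_i = \origin$.

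The remaining hypothesis requires computing $\theta = \sum_{i \in [m]} \beta_i \|v_i\|^2$. Since each $u_i$ is a unit vector, $\|P_{\ort{F}} u_i\|^2 = 1 - \|P_F u_i\|^2$. Multiplying by $\alpha_i$, summing, and using the trace of the first identity in \eqref{eq:proj_contact_unit_vectors} (namely $\sum_{i \in [m]} \alpha_i \|P_F u_i\|^2 = \tr{\id_F} = s$) together with $\sum_{i \in [m]} \alpha_i = d$, I obtain
\[
\sum_{i \in [m]} \alpha_i \|P_{\ort{F}} u_i\|^2 = d - s,
\]
so $\theta = (d-s)/d$.

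Identifying $\ort{F}$ with $\Re^{d-s}$ and applying \Href{Lemma}{lem:circumradius_bound} then yields that the circumradius of $\conv\{P_{\ort{F}} u_1, \dots, P_{\ort{F}} u_m\}$ is at least $\sqrt{(d-s)/d}$. Since orthogonal projection commutes with convex hull, this convex hull equals $P_{\ort{F}} K_{\text{L}}$, and the desired bound follows.

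There is no real obstacle here: the lemma is a direct corollary of the preceding \Href{Lemma}{lem:circumradius_bound} once the three algebraic identities are checked, and the only substantive calculation is the trace identity $\sum_i \alpha_i \|P_{\ort{F}} u_i\|^2 = d - s$. What is notable, as the authors remark, is that \emph{no symmetry hypothesis} on $K$ is required; this is because \Href{Lemma}{lem:circumradius_bound} produces its bound from the second moment $\theta$ without needing to pin down the circumcenter, in contrast to the inradius estimate of \Href{Lemma}{lem:inradius_in_F_perp}, where the symmetric case gains a $\sqrt{\,\cdot\,}$ improvement precisely because the origin can be identified as the incenter.
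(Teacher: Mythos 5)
Your proof is correct and follows exactly the paper's route: apply \Href{Lemma}{lem:circumradius_bound} with $v_i=P_{\ort{F}}u_i$ and $\beta_i=\alpha_i/d$, compute $\theta=(d-s)/d$ from the trace of the decomposition identity, and identify $P_{\ort{F}}K_{\text{L}}$ with the convex hull of the projected vectors. Your remark that one needs the full identity $\sum_i\alpha_i u_i=\origin$ from \Href{Corollary}{cor:lownercondition_ell_fixed_F} (rather than only its $F$-projection recorded in \eqref{eq:proj_contact_unit_vectors}) is a careful and accurate observation.
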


\subsection{Outer volume ratio in \texorpdfstring{$F$}{F}}
The proofs of our outer volume bounds follow from Theorem 1.2 of \cite{AlonsoGutirrez2021}, where the authors study lower bounds on the volume of projections of convex bodies in the L\"owner position. 
Starting with the breakthrough work \cite{BartheThesis} of Barthe, the main tool for obtaining outer volume ratio bounds is the following reverse form of the Brascamp--Lieb inequality:

\begin{lem}[Reverse Brascamp--Lieb inequality]
\label{lem:reverse_BL}
Let unit vectors $v_1,\ldots,v_m \in \Re^n$ and positive numbers $\beta_1,\ldots,\beta_m$ satisfy
$
\sum\limits_{i\in[m]} \beta_i v_i\otimes v_i = \id_{\Re^n}$, and
$g_1,\ldots,g_m\colon\Re\to[0,\infty)$ be integrable functions.
Define
\[
G(x) = \sup\braces{
\prod\limits_{i\in [m]}g_i(\theta_i)^{\beta_i} \st \theta_i\in\Re \text{ and }
x = \sum\limits_{i\in[m]}\beta_i \theta_i v_i}.
\]
Then,
\begin{equation}\label{eq:Barthe}
\int_{\ReN} G 
\geq 
\prod\limits_{i\in[m]}\parenth{\int g_i}^{\beta_i}.    
\end{equation}
\end{lem}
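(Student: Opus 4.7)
The approach is the mass-transport argument of Barthe from his thesis \cite{BartheThesis}. I would construct an explicit injective smooth map $\Theta\colon\Re^n\to\Re^n$ whose value at a point $x$ is tailored to be an admissible tuple in the supremum defining $G(\Theta(x))$, and whose Jacobian carries exactly the weighted product of the $g_i$'s that we need. A change of variables then reduces the inequality to a single pointwise determinant estimate -- the Ball--Barthe lemma -- followed by a direct Gaussian computation.

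\textbf{Execution.} First, by a standard truncation/convolution argument, reduce to the case where each $g_i$ is smooth and strictly positive on a compact interval $I_i$, with $\int g_i=1$; the goal is then $\int G\ge 1$. For each $i$, let $T_i\colon\Re\to I_i$ be the unique increasing diffeomorphism pushing the standard Gaussian density $\gamma(s)=(2\pi)^{-1/2}e^{-s^2/2}$ onto $g_i$, so that
\[
T_i'(s)\,g_i(T_i(s))=\gamma(s)\qquad\text{for every }s\in\Re.
\]
Define
\[
\Theta(x)=\sum_{i\in[m]}\beta_i\, T_i(\iprod{x}{v_i})\,v_i,
\]
which is the gradient of the strictly convex function $x\mapsto\sum_i\beta_i U_i(\iprod{x}{v_i})$ with $U_i'=T_i$, hence injective on $\Re^n$, with positive definite differential
\[
D\Theta(x)=\sum_{i\in[m]}\beta_i\, T_i'(\iprod{x}{v_i})\,v_i\otimes v_i.
\]
By the very definition of $G$, taking $\theta_i=T_i(\iprod{x}{v_i})$ gives $G(\Theta(x))\ge\prod_i g_i(T_i(\iprod{x}{v_i}))^{\beta_i}$, and the change-of-variables formula for the injective $C^1$ map $\Theta$ yields
\[
\int_{\Re^n}G(y)\,\di y\ \ge\ \int_{\Re^n}\prod_{i\in[m]}g_i(T_i(\iprod{x}{v_i}))^{\beta_i}\,\det D\Theta(x)\,\di x.
\]

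\textbf{Key ingredient.} The heart of the proof is the Ball--Barthe determinant lemma: under $\sum_i\beta_i v_i\otimes v_i=\id_{\Re^n}$ and positive numbers $\lambda_i$,
\[
\det\Big(\sum_{i\in[m]}\beta_i\lambda_i v_i\otimes v_i\Big)\ge\prod_{i\in[m]}\lambda_i^{\beta_i}.
\]
I would prove it via Cauchy--Binet: writing the matrix as $UU^{\top}$ with $U$ the $n\times m$ matrix whose columns are $\sqrt{\beta_i\lambda_i}\,v_i$, one gets $\det(UU^{\top})=\sum_{|S|=n}d_S\prod_{i\in S}\lambda_i$, where $d_S\ge 0$ is the squared determinant of the $n\times n$ submatrix of columns indexed by $S$, multiplied by $\prod_{i\in S}\beta_i$. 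Setting all $\lambda_i=1$ gives $\sum_S d_S=1$; Jacobi's formula applied to $\det(\sum_i\beta_i t_i v_i\otimes v_i)$ at $t=\mathbf 1$ (where the matrix equals $\id$) gives $\sum_{S\ni i}d_S=\beta_i$. Weighted AM--GM with weights $d_S$ then delivers the claim. Applying it pointwise with $\lambda_i=T_i'(\iprod{x}{v_i})$, the previous display becomes
\[
\int G\ \ge\ \int\prod_{i\in[m]}\big[T_i'(\iprod{x}{v_i})\,g_i(T_i(\iprod{x}{v_i}))\big]^{\beta_i}\di x=\int\prod_{i\in[m]}\gamma(\iprod{x}{v_i})^{\beta_i}\di x.
\]
Using $\sum_i\beta_i\iprod{x}{v_i}^2=\enorm{x}^2$ and $\sum_i\beta_i=n$ (trace of the John-type decomposition, since the $v_i$ are unit), the last integral equals $\int_{\Re^n}(2\pi)^{-n/2}e^{-\enorm{x}^2/2}\di x=1$, completing the normalized case. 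Undoing the normalization gives the general statement.

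\textbf{Main obstacle.} The one genuinely non-routine point is the Ball--Barthe determinant inequality; the transport construction and the Gaussian computation are essentially forced once this algebraic fact -- in particular the identity $\sum_{S\ni i}d_S=\beta_i$ that makes the weighted AM--GM close up -- is in hand. Secondary technical care is needed to justify that $\Theta$ is injective (handled by its gradient-of-strictly-convex representation, noting that $\sum\beta_i v_i\otimes v_i=\id$ forces the $v_i$ to span $\Re^n$) and that the reduction to smooth, compactly supported, strictly positive $g_i$ preserves both sides of \eqref{eq:Barthe} in the limit, which follows from monotone and dominated convergence.
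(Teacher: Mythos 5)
The paper does not prove this lemma at all: it is quoted as a known result with a citation to Barthe's work \cite{BartheThesis}, so there is no internal proof to compare against. Your proposal is a correct reconstruction of Barthe's original argument — monotone transport maps pushing the Gaussian onto each $g_i$, the map $\Theta(x)=\sum_i\beta_i T_i(\langle x,v_i\rangle)v_i$ as the gradient of a strictly convex potential, the Ball--Barthe determinant inequality (your Cauchy--Binet/Jacobi/weighted AM--GM derivation of it is sound, including the identity $\sum_{S\ni i}d_S=\beta_i$), and the final Gaussian normalization using $\sum_i\beta_i=n$. The only loose point is the opening reduction: convolution does not produce pointwise \emph{minorants}, and a general integrable $g_i$ (e.g.\ the indicator of a fat Cantor set) admits no continuous minorant of comparable integral, so the passage to smooth, strictly positive $g_i$ with connected support should instead go through an approximation from below by upper semicontinuous functions (Vitali--Carath\'eodory) followed by a decreasing-limit argument, as in Barthe's paper; this is a standard technicality and does not affect the substance of the proof.
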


To simplify the reading, we provide a sketch of the proof of the following lemma, omitting technical details.

\begin{lem}\label{lem:ovr_sym_fixed_subsp_John}
Under the notation of \Href{Subsection}{subsec:notation_lowner_ellipsoids}, the inequality
\[
\parenth{\frac{\volnf[s]{P_F K_{\mathrm{L_{sym}}}}}
{\volnf[s]{\crosp^s}}}^\frac{1}{s}  \geq 
\sqrt{\frac{s}{d}}
\]
holds.
\end{lem}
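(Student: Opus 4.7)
The plan is to mimic the proof of \Href{Lemma}{lem:vr_sym_fixed_subsp_John} with the reverse Brascamp--Lieb inequality (\Href{Lemma}{lem:reverse_BL}) in place of the standard Brascamp--Lieb inequality. The indicator functions employed there---which produced an $\ell_\infty$-type gauge corresponding to $K_{\mathrm{J_{sym}}}\cap F$---will be replaced by symmetric exponentials, whose reverse Brascamp--Lieb supremum yields precisely the Minkowski functional of the symmetric convex hull $P_FK_{\mathrm{L_{sym}}}$.

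Let $I=\{i\in[m]:P_Fu_i\neq\origin\}$. I set $v_i=P_Fu_i/\enorm{P_Fu_i}$ and $\beta_i=\alpha_i\enorm{P_Fu_i}^2$ for $i\in I$; from \eqref{eq:proj_contact_unit_vectors} the hypothesis $\sum_{i\in I}\beta_iv_i\otimes v_i=\id_F$ of \Href{Lemma}{lem:reverse_BL} is satisfied and $\sum_{i\in I}\beta_i=s$. I then apply \Href{Lemma}{lem:reverse_BL} with $g_i(t)=e^{-|t|/\enorm{P_Fu_i}}$, so that $\int g_i=2\enorm{P_Fu_i}$ and the right-hand side of \eqref{eq:Barthe} becomes $2^s\prod_{i\in I}\enorm{P_Fu_i}^{\alpha_i\enorm{P_Fu_i}^2}$. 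For the left-hand side, the substitution $\lambda_i=\beta_i\theta_i/\enorm{P_Fu_i}$ rewrites the linear constraint $x=\sum\beta_i\theta_iv_i$ as $x=\sum\lambda_iP_Fu_i$ and converts the exponent $\sum\beta_i|\theta_i|/\enorm{P_Fu_i}$ into $\sum|\lambda_i|$. Hence $G(x)=e^{-N(x)}$, where $N$ is the Minkowski functional of $\conv\{\pm P_Fu_i:i\in I\}=P_FK_{\mathrm{L_{sym}}}$, and the standard polar-coordinate identity $\int_F e^{-N(x)}\di x=s!\,\vol[s]{\{N\le 1\}}$ yields $\int G=s!\,\vol[s]{P_FK_{\mathrm{L_{sym}}}}$. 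The reverse Brascamp--Lieb inequality then gives
\[
s!\,\vol[s]{P_FK_{\mathrm{L_{sym}}}}\ge 2^s\prod_{i\in I}\enorm{P_Fu_i}^{\alpha_i\enorm{P_Fu_i}^2}.
\]

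All that remains is a Jensen estimate dual to the one used for \Href{Lemma}{lem:vr_sym_fixed_subsp_John}. Setting $t_i=\enorm{P_Fu_i}^2\in[0,1]$, the identities \eqref{eq:proj_contact_unit_vectors} yield $\sum_i\alpha_it_i=s$ and $\sum_i\alpha_i=d$, so applying Jensen's inequality to the convex function $t\mapsto t\log t$ with probability weights $\alpha_i/d$ gives $\sum_i\alpha_it_i\log t_i\ge s\log(s/d)$, i.e.\ $\prod_{i\in I}\enorm{P_Fu_i}^{\alpha_i\enorm{P_Fu_i}^2}\ge(s/d)^{s/2}$. Combining this with the displayed inequality and using $\vol[s]{\crosp^s}=2^s/s!$ produces $\vol[s]{P_FK_{\mathrm{L_{sym}}}}\ge(s/d)^{s/2}\vol[s]{\crosp^s}$, and the claim follows after taking $s$-th roots. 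The only delicate point I anticipate is the calibration of $g_i$: the factor $1/\enorm{P_Fu_i}$ inside the exponential is precisely what is needed so that the change of variables identifies $N$ with the gauge of the \emph{unscaled} convex hull $\conv\{\pm P_Fu_i\}$, ensuring that no stray scaling factor survives into the final bound.
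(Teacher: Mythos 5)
Your proof is correct and follows essentially the same route as the paper's (reverse Brascamp--Lieb with symmetric exponentials $g_i(t)=e^{-|t|/\enorm{P_Fu_i}}$, identification of $G$ with $e^{-N}$ for $N$ the gauge of $P_FK_{\mathrm{L_{sym}}}$, and the Jensen estimate on $\prod\enorm{P_Fu_i}^{\alpha_i\enorm{P_Fu_i}^2}$); you merely fill in the change of variables and the layer-cake computation that the paper leaves to the reader. Note that your direction of the Jensen bound, $\prod_{i\in I}\enorm{P_Fu_i}^{\alpha_i\enorm{P_Fu_i}^2}\ge(s/d)^{s/2}$, is the correct one (the paper's text states $2^s(d/s)^{s/2}$, which is a typo, as the product of numbers in $[0,1]$ raised to positive powers cannot exceed $1$).
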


\begin{proof}
Let $I \subset [m]$ be the set of those indices $i\in[m]$ for which $P_F u_i \neq 0.$  
Set
$v_i = \frac{P_F u_i}{\enorm{P_F u_i}}$ and
$\beta_i = {\alpha_i}{\enorm{P_F u_i}^2}$ for all $i \in I.$ 
Then
\[
\sum_{i \in I} \beta_i v_i \otimes v_i = \sum_{i \in [m]} \alpha_i P_F u_i \otimes P_F u_i = \id_F.
\]
Define 
\[
g_i(t) = e^{-\frac{\enorm{t}}{\enorm{P_F u_i}}}
\]
for each $i \in I$. 

We leave it to the reader to verify that the product of the integrals on the right-hand side of \eqref{eq:Barthe} equals 
$2^s \prod\limits_{i \in I} \enorm{P_F u_i}^{\alpha_i \enorm{P_F u_i}^2}$. This is at least $2^s \parenth{\frac{d}{s}}^{\frac{s}{2}}$ by Jensen's inequality. 

On the other hand, the corresponding function $G$ equals $e^{-\norm{\,\cdot\,}_{K_{\mathrm{L_{sym}}}}},$ where $\norm{\,\cdot\,}_{K_{\mathrm{L_{sym}}}}$ is the gauge function of $K_{\mathrm{L_{sym}}}:$
\[
\norm{x}_{K_{\mathrm{L_{sym}}}} = \inf\braces{t > 0 \st x \in t K_{\mathrm{L_{sym}}}}.
\]
Hence, by standard computation, the integral of $G$ is 
\[
\int_{\Re^s} G = s! \volnf[s]{P_F K_{\mathrm{L_{sym}}}}.
\]
The desired bound follows, since $\volnf[s]{\crosp^s} = \frac{2^s}{s!}.$
\end{proof}

\begin{lem}\label{lem:ovr_fixed_subsp_John}
Under the notation of \Href{Subsection}{subsec:notation_lowner_ellipsoids}, the inequality
\[
\parenth{\frac{\volnf[s]{P_F K_{L}}}
{\volnf[s]{\Delta_L^s}}}^\frac{1}{s}  \geq 
\sqrt{\frac{s}{d}}
\]
holds.
\end{lem}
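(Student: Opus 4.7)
The plan is to dualize the proof of Lemma~\ref{lem:vr_gen_fixed_subsp_John}, swapping the Brascamp--Lieb inequality for its reverse form (Lemma~\ref{lem:reverse_BL}) in the spirit of Theorem~1.2 of \cite{AlonsoGutirrez2021}. I reuse verbatim the lifting to $\tilde F = F \times \Re$ employed there: with $I = \{i : P_F u_i \neq \origin\}$ and $d' = \sum_{i \in I} \alpha_i$, set
\[
\tilde q_i = \Bigl(-\sqrt{\tfrac{d'}{d'+1}}\, P_F u_i,\; \tfrac{1}{\sqrt{d'+1}}\Bigr),\quad \tilde\alpha_i = \tfrac{d'+1}{d'}\alpha_i,\quad v_i = \tfrac{\tilde q_i}{\|\tilde q_i\|},\quad \beta_i = \tilde\alpha_i\|\tilde q_i\|^2,
\]
so that $\sum_{i \in I}\beta_i v_i \otimes v_i = \id_{\tilde F}$ and $\sum_{i\in I}\beta_i = s+1$.

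Then I apply Lemma~\ref{lem:reverse_BL} with the one-sided exponentials $g_i(t) = e^{-t/\|\tilde q_i\|}\mathbf{1}_{[0,\infty)}(t)$, whose integrals are $\int g_i = \|\tilde q_i\|$. The supremum $G$ admits a clean geometric description: because every $\tilde q_i$ has the same last coordinate $1/\sqrt{d'+1}$, writing $x = (y,\tau) = \sum_{i\in I} \lambda_i \tilde q_i$ with $\lambda_i \ge 0$ forces $\sum\lambda_i = \tau\sqrt{d'+1}$, and the $F$-component condition reduces to $-y/(\tau\sqrt{d'}) \in \mathrm{conv}\{P_F u_i : i \in I\} = P_F K_L$. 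Hence $G(x) = e^{-\tau\sqrt{d'+1}}$ on the cone over $P_F K_L$ (and zero elsewhere), and slicing by hyperplanes of constant last coordinate yields
\[
\int_{\tilde F} G \;=\; (d')^{s/2}\volnf[s]{P_F K_L}\int_0^{\infty} \tau^s e^{-\tau\sqrt{d'+1}}\di\tau \;=\; \frac{s!\,(d')^{s/2}}{(d'+1)^{(s+1)/2}}\volnf[s]{P_F K_L}.
\]

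The right-hand side of \eqref{eq:Barthe} equals $\prod_{i\in I}\|\tilde q_i\|^{\beta_i}$, which I bound from below by Jensen's inequality applied to the convex function $t \mapsto t\log t$ with weights $\tilde\alpha_i/(d'+1)$ and values $\|\tilde q_i\|^2$: since $\sum_{i\in I}\tilde\alpha_i\|\tilde q_i\|^2 = s+1$ and $\sum_{i\in I}\tilde\alpha_i = d'+1$, this gives $\prod_{i\in I}\|\tilde q_i\|^{\beta_i} \geq \bigl(\tfrac{s+1}{d'+1}\bigr)^{(s+1)/2}$. Combining with the integral formula above and the known value $\volnf[s]{\Delta_L^s} = (s+1)^{(s+1)/2}/(s!\,s^{s/2})$, the powers of $s+1$ and $d'+1$ cancel, leaving
\[
\Bigl(\tfrac{\volnf[s]{P_F K_L}}{\volnf[s]{\Delta_L^s}}\Bigr)^{\!1/s} \;\geq\; \sqrt{\tfrac{s}{d'}} \;\geq\; \sqrt{\tfrac{s}{d}},
\]
the last inequality from $d' \leq d$. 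The main obstacle is the explicit identification of $G$ as $e^{-\tau\sqrt{d'+1}}$ on the cone over $P_F K_L$, which rests on the common last coordinate of the lifted vectors $\tilde q_i$; once this geometric fact is established, the rest is Jensen's inequality and algebraic bookkeeping that mirrors the John case with all inequalities reversed, and the bound is sharp on the regular simplex $\Delta_L^d$ with $F$ chosen appropriately.
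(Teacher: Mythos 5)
Your proposal is correct and follows essentially the same route as the paper's proof: the identical lift to $\widetilde{F}=F\times\Re$ with the vectors $\widetilde q_i$, the reverse Brascamp--Lieb inequality applied to the one-sided exponentials $g_i$, the identification of $G$ as $e^{-\tau\sqrt{d'+1}}$ on the cone over $P_F K_{\mathrm{L}}$ (which the paper phrases via the gauge-like function $C$), and the concluding Jensen step with $d'\leq d$. No substantive differences.
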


\begin{proof}
Following Barthe's argument \cite{BartheThesis}, we lift the picture to 
$\widetilde{F} = F \times \Re$, and write 
$(a,t)$ for a vector in $\widetilde{F}$ with $a \in F$ and 
$t \in \Re.$
Let $I \subset [m]$ be the index set of those $i\in[m]$ such that $P_F u_i \neq 0.$  
Let $d^\prime = \sum\limits_{i \in I} \alpha_i$, and define
\[
\widetilde{q}_i = \parenth{-\sqrt{\frac{d^\prime}{d^\prime + 1}} P_F u_i, 
\frac{1}{\sqrt{d^\prime + 1}}}, \quad 
\widetilde{\alpha}_i = \frac{d^\prime + 1}{d^\prime} \alpha_i.
\]
Set
$v_i = \frac{\widetilde{q}_i}{\enorm{\widetilde{q}_i}}$ and
$\beta_i = \widetilde{\alpha}_i \enorm{\widetilde{q}_i}^2$ for all $i \in I.$ 
Then,
\[
\sum\limits_{i \in I} \beta_i v_i \otimes v_i =
\sum\limits_{i \in I}
\widetilde{\alpha}_i \widetilde{q}_i \otimes \widetilde{q}_i =
\id_{\widetilde{F}}.
\]

Define a gauge-like function $C$ on $\widetilde{F}$ by
\[
C(x) = \inf \braces{
\sum_{i \in I} \lambda_i \colon \lambda_i \geq 0 \text{ and } x = \sum_{i \in I} \lambda_i \widetilde{q}_i },
\]
where we set $C(x) = +\infty$, if such a decomposition does not exist.
Let $G(x) = e^{-C(x)}$.

By convexity, the support of $G$ is the cone
$\poscone{\parenth{P_F K_{\text{L}}, \frac{1}{\sqrt{d^\prime}}}}$; the function $G$ is constant and equal to $e^{-\tau \sqrt{d^\prime + 1}}$ on the intersection of this cone with the hyperplane $H_\tau = \braces{(x,t) \in \widetilde{F} \st t = \tau}$ for any $\tau > 0$. Thus, 
\[
\int_{\widetilde{F}} G  = 
\volnf[s]{P_F K_{\text{L}}}  \cdot \int_0^\infty e^{-r \sqrt{d^\prime + 1}} 
(r \sqrt{d^\prime})^s \di r =
\frac{s!(d^\prime)^{s/2}}{(d^\prime + 1)^{(s+1)/2}} \volnf[s]{P_F K_{\text{L}}}.
\]

Now define
\[
g_i(t) = e^{-t/\enorm{\widetilde{q}_i}} \cdot \indicator{[0, +\infty)}(t)
\]
for each $i \in I$. Then, for any $x$ in the support of $G,$
\[
\sup\braces{
\prod\limits_{i \in I} g_i(\theta_i)^{\beta_i} \st 
x = \sum\limits_{i \in I} \beta_i \theta_i v_i}
\stackrel{\lambda_i = \widetilde{\alpha}_i \enorm{\widetilde{q}_i} \theta_i}{=}
e^{-\inf\braces{
\sum\limits_{i \in I} \lambda_i \st 
x = \sum\limits_{i \in I} \lambda_i \widetilde{q}_i}} = G(x).
\]

Thus, by the reverse Brascamp--Lieb inequality (\Href{Lemma}{lem:reverse_BL}),
\[
\int_{\Red} G 
\geq 
\prod\limits_{i \in I} \enorm{\widetilde{q}_i}^{\widetilde{\alpha}_i \enorm{\widetilde{q}_i}^2}.
\]
Applying Jensen’s inequality with 
$\sum\limits_{i \in I} \widetilde{\alpha}_i \enorm{\widetilde{q}_i}^2 = s + 1$ 
and $\sum\limits_{i \in I} \widetilde{\alpha}_i = d^\prime +1$, we obtain
\[
\prod\limits_{i \in I} \enorm{\widetilde{q}_i}^{\widetilde{\alpha}_i \enorm{\widetilde{q}_i}^2} 
\geq \parenth{\frac{s + 1}{d^\prime + 1}}^{(s + 1)/2}.
\]
Substituting into the expression for $\int_{\widetilde{F}} G$, we find
\[
\volnf[s]{P_F K_{\text{L}}} \geq \parenth{\frac{s + 1}{d^\prime + 1}}^{(s + 1)/2} 
\frac{(d^\prime + 1)^{(s + 1)/2}}{s! (d^\prime)^{s/2}} = 
\parenth{\frac{s}{d^\prime}}^{s/2} \volnf[s]{\Delta_L^s}.
\]
The desired bound follows since $d^\prime \leq d.$
\end{proof}

\begin{rem}
The bound is optimal whenever $s + 1$ divides $d + 1$. It is attained when 
$K = \Delta_L^d$ and $F$ is the subspace spanned by the centroids of $s+1$ pairwise disjoint $\parenth{\frac{d+1}{s+1} - 1}$-dimensional faces of $\Delta_L^d.$
\end{rem}

\subsection{Proof of \Hreftitle{Theorem}{thm:ellLownerfixedproperties}}
\begin{proof}[Proof of \Href{Theorem}{thm:ellLownerfixedproperties}]
We use the notation introduced in \Hrefs{Subsections}{subsec:notation_ellipsoids}{subsec:notation_lowner_ellipsoids}.
By duality, $K_{\text{L}} = \polarset{\parenth{K_{\text{J}}}}$ and 
$K_{\mathrm{L_{sym}}} = \polarset{ \parenth{K_{\mathrm{J_{sym}}}}}.$ Hence, the inclusions follow from the corresponding inclusions of \Href{Theorem}{thm:ellfixedproperties}.
The bound on the circumradius was obtained in \Href{Lemma}{lem:circumradius_in_F_perp}. The volume bounds were proven in \Href{Lemma}{lem:ovr_fixed_subsp_John} in the general case and in \Href{Lemma}{lem:ovr_sym_fixed_subsp_John} in the centrally symmetric case $K = -K.$

\end{proof}

\appendix
\section{Tightness of the inradius bound}\label{sec:tightness_inrad_bound}

The goal of this section is to state and prove \Href{Theorem}{thm:tightness_inrad_bound} according to which the bound given in \ref{item:inradius} of \Href{Theorem}{thm:ellfixedproperties} on the inradius is the best possible.

Set
\[
a = \frac{1}{\sqrt{n}}(1, \dots, 1) \in \Re^n, \quad n \geq 2.
\]
We will define vectors $v_1, \dots, v_{n+1}$ that form a directionally dilated regular simplex along the direction of $a$.

For each $i\in [n]$, define the vector $p_i \in \Re^n$ whose $j$-th coordinate is given by
\[
p_i[j] = 
\begin{cases}
\frac{n - 1}{n}, & \text{if } j = i, \\
-\frac{1}{n}, & \text{if } j \neq i.
\end{cases}
\]
Then
$
\enorm{p_i}^2  = \frac{n - 1}{n}.
$
Define 
$
 q_i = \frac{p_i}{\enorm{p_i}} \in \Re^n.
$
Note that for each $i \in [n]$, the vectors $p_i$ and $q_i$ belong to $\ort{a}$, and that $\iprod{q_i}{q_j}=-\frac{1}{n-1}$ for any $1\leq i<j\leq n$.

For any $t\in(0,1)$, define the vectors
\[
v_i = t q_i + \sqrt{1 - t^2} \cdot a, \quad \text{for } i \in [n]
\]
Now, fix a positive constant $\gamma$ satisfying
\begin{equation}
    \label{eq:example_gamma_condition}
    n \gamma < 1.
\end{equation}

We observe that
\[
\sum_{i \in [n]} p_i =\origin  = \sum_{i \in [n]} q_i,
\]
\[
\sum_{i \in [n]} v_i = t \sum_{i \in [n]}q_i + n \sqrt{1 - t^2} \cdot a = n \sqrt{1 - t^2} \cdot a .
\]

Define $v_{n+1}$ by the identity
\begin{equation}\label{eq:gammasum}
 \gamma \sum_{i \in [n]} v_i + (1 - n \gamma) v_{n+1} =\origin.
\end{equation}
So, 
\[
v_{n+1} = - \frac{\gamma n}{ 1 - \gamma n}\sqrt{1 - t^2} \cdot a.
\]

Our second restriction is
\begin{equation}
    \label{eq:example_last_v_ineq}
    \frac{\gamma n}{ 1 - \gamma n}\sqrt{1 - t^2} < 1.
\end{equation}

Denote $\Delta = \conv\braces{v_1, \dots, v_{n+1}}$.
Let us estimate the inradius of the polar $\polarset{\Delta}$ of $\Delta$.
A direct computation yields the following:
\begin{lem}
\label{lem:example_comp_dual_simplex}
    The vertices of the simplex $\polarset{\Delta}$ are 
    \[
x_{n+1} = \frac{1}{ \sqrt{1 - t^2}} a,
\]
and 
\[
x_i = -\frac{1}{n \gamma}\frac{(n - 1)}{t} q_i - \frac{1 - n \gamma}{n \gamma} \cdot \frac{1}{\sqrt{1 - t^2}} a, \quad \text{for } i \in [n].
\]
\end{lem}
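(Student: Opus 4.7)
The plan is to invoke the standard duality for simplices: if $\Delta\subset\Re^n$ is an $n$-simplex with the origin in its interior, then $\polarset{\Delta}$ is an $n$-simplex whose $n+1$ vertices are the unique solutions $x_k$ (for $k\in[n+1]$) of the linear systems $\iprod{x_k}{v_j}=1$ for $j\in[n+1]\setminus\{k\}$, each additionally satisfying $\iprod{x_k}{v_k}<1$. It therefore suffices to verify these two conditions for the proposed points.

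First I would check that $\Delta$ is an $n$-simplex with $\origin$ in its interior. Affine independence of $v_1,\ldots,v_{n+1}$ follows from the facts that $v_1,\ldots,v_n$ lie in the hyperplane $\{y:\iprod{a}{y}=\sqrt{1-t^2}\}$ with pairwise differences $v_i-v_j=t(q_i-q_j)$ spanning $\ort{a}$ (any $n-1$ of the $q_i$'s are linearly independent in the $(n-1)$-dimensional space $\ort{a}$), while $v_{n+1}$ is a nonzero multiple of $a$ lying off that hyperplane by \eqref{eq:example_last_v_ineq}. Identity \eqref{eq:gammasum} exhibits $\origin$ as a convex combination of the $v_j$ with coefficients $\gamma,\ldots,\gamma,1-n\gamma$, all positive by \eqref{eq:example_gamma_condition}. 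The same sort of argument shows linear independence of $\{v_j:j\neq k\}$ for each fixed $k$, which is what gives uniqueness of $x_k$.

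The verification of the equations themselves reduces to the three identities $\iprod{a}{a}=1$, $\iprod{q_i}{a}=0$, and $\iprod{q_i}{q_j}=-\tfrac{1}{n-1}$ for $i\neq j$. For $j\in[n]$ one computes $\iprod{x_{n+1}}{v_j}=\tfrac{t}{\sqrt{1-t^2}}\iprod{a}{q_j}+\iprod{a}{a}=1$. For $i\in[n]$ and $j\in[n]\setminus\{i\}$, the $q$-term contributes $\tfrac{1}{n\gamma}$ and the $a$-term contributes $-\tfrac{1-n\gamma}{n\gamma}$, giving $\iprod{x_i}{v_j}=1$; similarly $\iprod{x_i}{v_{n+1}}=1$ because the $q_i$-component vanishes against $a$. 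The strict inequalities $\iprod{x_{n+1}}{v_{n+1}}=-\tfrac{n\gamma}{1-n\gamma}<1$ and $\iprod{x_i}{v_i}=-\tfrac{n-1}{n\gamma}-\tfrac{1-n\gamma}{n\gamma}=1-\tfrac{1}{\gamma}<1$ complete the verification. The only real obstacle is the arithmetic bookkeeping; the structural fact that the polar of an $n$-simplex is an $n$-simplex whose vertices are dual to the original facets is classical.
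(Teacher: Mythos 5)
Your proposal is correct and follows essentially the same route as the paper's verification: one checks directly, using $\iprod{a}{a}=1$, $\iprod{q_i}{a}=0$ and $\iprod{q_i}{q_j}=-\tfrac{1}{n-1}$, that each proposed $x_k$ satisfies $\iprod{x_k}{v_j}=1$ for $j\neq k$ and $\iprod{x_k}{v_k}<1$ (indeed $<0$), which identifies the $x_k$ as the vertices of $\polarset{\Delta}$ dual to the facets of $\Delta$. Your added preliminary check that $\Delta$ is a nondegenerate $n$-simplex with $\origin$ in its interior is a welcome explicit step (note only that $v_{n+1}$ lies off the hyperplane $\iprod{a}{y}=\sqrt{1-t^2}$ already because its $a$-coefficient is negative by \eqref{eq:example_gamma_condition}, so \eqref{eq:example_last_v_ineq} is not needed there), and the arithmetic is all accurate.
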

\noshow{
    \begin{proof}
    The vertex of $\polarset{\Delta}$ corresponding to the facet $\conv\{v_1, \dots, v_n\}$ of $\Delta$ is
    \[
    x_{n+1} = \frac{1}{\sqrt{1 - t^2}} a.
    \]
    Clearly, $\iprod{x_{n+1}}{v_{n+1}} < 0$ and $\iprod{x_{n+1}}{v_i} = 1$ for all $i \in [n]$.
    
    By direct computation, for each $i \in [n]$, 
    \[
    \iprod{x_i}{v_i} = -\frac{1}{n \gamma} \cdot \frac{n - 1}{t} \cdot t \iprod{q_i}{q_i} - 
     \frac{1 - n \gamma}{n \gamma} \cdot \frac{1}{\sqrt{1 - t^2}} \cdot \sqrt{1 - t^2} \iprod{a}{a} = 
     - \frac{n - n \gamma}{n \gamma},
    \]
    where we used the fact that $\iprod{q_i}{a} = 0$. 
    Note that if inequality \eqref{eq:example_gamma_condition} holds, then $\iprod{x_i}{v_i} < 0$
    for every $i \in [n]$.
    
    Since $\iprod{q_i}{q_j} = - \frac{1}{n-1}$ whenever $j \in [n]$ and $i \neq j$, we conclude that
    \[
    \iprod{x_i}{v_j} = -\frac{1}{n \gamma} \cdot \frac{n - 1}{t} \cdot t \iprod{q_i}{q_j} - 
     \frac{1 - n \gamma}{n \gamma} \cdot \frac{1}{\sqrt{1 - t^2}} \cdot \sqrt{1 - t^2} \iprod{a}{a} = 
    \frac{1}{n \gamma} -  \frac{1 - n \gamma}{n \gamma} = 1
    \]
    for all $j \ne i$, as required for duality.
    Clearly, $\iprod{x_i}{v_{n+1}} = 1$ for all $i \in [n]$. The lemma follows.
    \end{proof}

    \begin{rem}
    There is no mystery in the explicit formulas for the vertices of $\polarset{\Delta}$. 
    By symmetry, the facet of $\polarset{\Delta}$ opposite to $x_{n+1}$ must be parallel to $\ort{a}$.
    Therefore, the $a$-component can be found directly from the equation $\iprod{x_i}{v_{n+1}} = 1$. 
    On the other hand, this facet must be a translation of the simplex $- \tau \conv\{q_1, \dots, q_n\}$ for some positive $\tau$. Indeed, this facet is a homothet of $\polarset{\Delta} \cap \ort{a}$ with center at $x_{n+1}$. 
    The set $\polarset{\Delta} \cap \ort{a}$ is the regular simplex 
    \[
    \conv\braces{ \frac{1 - n}{t} q_1, \dots, \frac{1 - n}{t} q_n},
    \]
    since $\polarset{\Delta} \cap \ort{a}$ is the polar in $\ort{a}$ of the projection of $\Delta$ onto $\ort{a}$.
    This projection is the regular simplex $\conv\braces{ t q_1, \dots, t q_n}$.
    \end{rem}
    }

\begin{lem}
\label{lem:example_sectoin_third_point}
The section of $\polarset{\Delta}$ by the plane $L$ spanned by the vectors $x_1$ and $x_{n+1}$ is the triangle $T$ with vertices $x_1$, $x_{n+1}$, and $y$, given by
\[
y = \frac{1}{n \gamma} \cdot \frac{q_1}{t} - \frac{1 - n\gamma}{n \gamma} \cdot \frac{a}{\sqrt{1 - t^2}}.
\]
\end{lem}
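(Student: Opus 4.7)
My plan is to parametrize the plane $L$, restrict the defining inequalities of $\polarset{\Delta}$ to $L$, and observe that only three of them are effective, cutting out the desired triangle.

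First, I would note that since $x_1$ is a positive linear combination of $q_1$ and $a$ (up to sign), and $x_{n+1}$ is a multiple of $a$, the plane $L$ coincides with $\spann\{q_1, a\}$. In particular, the point $y$ defined in the statement lies in $L$ by construction. A routine computation, using $\iprod{q_1}{a}=0$, $\enorm{q_1}=\enorm{a}=1$, and $\iprod{q_1}{q_j}=-\frac{1}{n-1}$ for $j\in\{2,\dots,n\}$, then yields $\iprod{y}{v_1}=1$, $\iprod{y}{v_{n+1}}=1$, and $\iprod{y}{v_j}=-\frac{1}{\gamma(n-1)}+1$ for $j\in\{2,\dots,n\}$. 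Since $n\gamma<1$ by \eqref{eq:example_gamma_condition}, this last quantity is strictly negative, in particular $\le 1$. Therefore $y\in \polarset{\Delta}$ and $y$ lies on the facets of $\polarset{\Delta}$ dual to $v_1$ and to $v_{n+1}$.

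Next, I would write any $x\in L$ as $x=\alpha q_1+\beta a$, and observe that the $n+1$ inequalities $\iprod{x}{v_i}\le 1$ defining $\polarset{\Delta}$ reduce, on $L$, to just three inequalities: the $v_1$-inequality $t\alpha+\sqrt{1-t^2}\,\beta\le 1$, the $v_{n+1}$-inequality $-\frac{n\gamma}{1-n\gamma}\sqrt{1-t^2}\,\beta\le 1$, and the common $v_j$-inequality ($j\in\{2,\dots,n\}$) $-\frac{t\alpha}{n-1}+\sqrt{1-t^2}\,\beta\le 1$, because all vectors $v_2,\dots,v_n$ have the same inner product against $q_1$ and against $a$. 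Hence $L\cap\polarset{\Delta}$ is the planar region cut out by these three half-planes.

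Finally, using \Href{Lemma}{lem:example_comp_dual_simplex} together with the computation of the previous paragraph, I would verify that each of the three points $x_1$, $x_{n+1}$, $y$ saturates exactly two of these three inequalities and satisfies the third strictly (for instance, $x_{n+1}$ saturates the $v_1$- and $v_j$-inequalities; $x_1$ saturates the $v_j$- and $v_{n+1}$-inequalities; $y$ saturates the $v_1$- and $v_{n+1}$-inequalities). This identifies $\{x_1,x_{n+1},y\}$ as the three vertices of the triangular region defined by these half-planes and completes the proof. The only mildly delicate point is checking the sign conditions (namely, that $n\gamma<1$ guarantees the third inequality is strict at each vertex), which follows directly from \eqref{eq:example_gamma_condition}; no further obstacle is expected.
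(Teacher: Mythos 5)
Your proposal is correct and follows essentially the same route as the paper: both arguments hinge on the observation that $v_2,\dots,v_n$ become indistinguishable when restricted to $L$ (you phrase this as the coincidence of their half-plane inequalities on $L$, the paper as the coincidence of their projections $P_L v_i$), and both identify $y$ via the relations $\iprod{y}{v_1}=\iprod{y}{v_{n+1}}=1$. The only cosmetic difference is that the paper passes through the duality $\polarset{\parenth{\polarset{\Delta}\cap L}}=P_L\Delta$ inside $L$, whereas you intersect the three half-planes directly; your vertex-saturation checks are accurate and suffice.
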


\begin{proof}
Clearly, 
\[
L = \operatorname{span}\braces{v_1, v_{n+1}} = \operatorname{span}\braces{q_1, a},
\]
and $q_1$ and $a$ form an orthonormal basis of $L$.

By duality, the polar in $L$ of $\polarset{\Delta} \cap L$ is the orthogonal projection of $\Delta$ onto $L$.
We compute this projection. First, $v_1$ and $v_{n+1}$ already lie in $L$.
Second, for each $i \in [n] \setminus \{1\}$, 
\[
P_L v_i = \iprod{v_i}{q_1} q_1 + \iprod{v_i}{a} a = t \iprod{q_1}{q_i} q_1 + \sqrt{1 - t^2} \cdot a = 
-\frac{t}{n - 1} q_1 + \sqrt{1 - t^2} \cdot a,
\]
where $P_L$ denotes the orthogonal projection onto $L$.
In particular, all $v_2, \dots, v_n$ project onto the same point.

Thus, $P_L \Delta$ is the triangle with vertices $v_1$, $v_{n+1}$, and $P_L v_2$.
Hence, $\polarset{\Delta} \cap L$ is a triangle, say $T$. Let us compute its vertices.
Since $x_1$ and $x_{n+1}$ lie in $L$, they are two of the vertices of $T$.
The remaining vertex $y$ can be found using duality in $L$:
\[
y = \frac{1}{n \gamma} \cdot \frac{q_1}{t} - \frac{1 - n\gamma}{n \gamma} \cdot \frac{a}{\sqrt{1 - t^2}}.
\]
Indeed, 
\[
\iprod{y}{v_{n+1}} = 
\frac{1 - n\gamma}{n \gamma} \cdot \frac{1}{\sqrt{1 - t^2}} \iprod{a}{v_{n+1}} = 1,
\]
and
\[
\iprod{y}{v_1} =
\frac{1}{n \gamma} \cdot \iprod{\frac{q_1}{t}}{t q_1} 
- \frac{1 - n\gamma}{n \gamma} \cdot \iprod{\frac{a}{\sqrt{1 - t^2}}}{\sqrt{1 - t^2} a}= 
\frac{1}{n \gamma} - \frac{1 - n\gamma}{n \gamma} = 1.
\]
The lemma follows.
\end{proof}

\begin{lem}
Let $h$ be the foot of the altitude from the vertex $x_{n+1}$ of the triangle $T$, and let $z$ be the intersection point of the altitude $x_{n+1}h$ and the angle bisector at vertex $y$. Then the inradius $R(t)$ of $\polarset{\Delta}$ equals $\enorm{h - z}$. Moreover,
\[
R(t) = 
\frac{1}{n \gamma} \cdot \frac{1}{1 + \sqrt{1 - t^2}}.
\]
\end{lem}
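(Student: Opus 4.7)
The plan is to prove the statement in three stages: a symmetry reduction identifying where the incenter must lie, a reduction of the facet-distance computations in $\Re^n$ to distance computations in the plane $L$, and a direct planar calculation.

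First, the symmetric group $S_n$ permuting $q_1,\ldots,q_n$ (while fixing $a$) acts on $\polarset{\Delta}$ by isometries, permuting the vertices $x_1,\ldots,x_n$ and fixing $x_{n+1}$. By uniqueness of the inball, its center must lie in the fixed set of this action, namely the $a$-axis. Hence it suffices to find the unique point on the $a$-axis that is equidistant from all facets of $\polarset{\Delta}$.

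The main observation is that, for any point $P$ on the $a$-axis, the distance from $P$ to each facet $F_i := \conv\braces{x_1,\ldots,\widehat{x_i},\ldots,x_{n+1}}$ can be read off from the planar picture in $L$. For $F_{n+1}$ this is immediate: \Href{Lemma}{lem:example_comp_dual_simplex} shows that every $x_j$ with $j\in[n]$ has the same $a$-coordinate, so $F_{n+1}$ is perpendicular to the $a$-axis and the segment $x_1 y$ lies in $\operatorname{aff}(F_{n+1})$. For $F_1$ (and by symmetry for each $F_i$ with $i\in[n]$), the $S_{n-1}$-symmetry permuting $q_2,\ldots,q_n$ fixes both $P$ and $F_1$, so the foot of the perpendicular from $P$ to $\operatorname{aff}(F_1)$ must lie in the fixed subspace of this action, which is precisely $L=\operatorname{span}\{q_1,a\}$. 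Using $\sum_{i=1}^n q_i = 0$, one checks that $y$ equals the centroid of $\{x_2,\ldots,x_n\}$ and hence lies in $\operatorname{aff}(F_1)\cap L$, which is therefore the line through $x_{n+1}$ and $y$. Consequently, the distance from $P$ to $F_{n+1}$ equals its distance in $L$ to the line $x_1 y$, while its distance to any $F_i$ with $i\in[n]$ equals its distance in $L$ to the line $x_{n+1}y$.

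Combining these, the incenter of $\polarset{\Delta}$ is exactly the point on the $a$-axis equidistant (in $L$) from the lines $x_1 y$ and $x_{n+1}y$, i.e.\ the intersection $z$ of the $a$-axis with the angle bisector of $T$ at $y$. Since the line $x_1 y$ is perpendicular to the $a$-axis and meets it at $h$, the inradius equals $\enorm{h-z}$. For the formula, I work in orthonormal $(q_1,a)$-coordinates on $L$ and set $Q=\enorm{h-y}=\frac{1}{n\gamma t}$ and $H=\enorm{x_{n+1}-h}=\frac{1}{n\gamma\sqrt{1-t^2}}$, where the latter uses the telescoping $\frac{1}{\sqrt{1-t^2}}+\frac{1-n\gamma}{n\gamma\sqrt{1-t^2}}=\frac{1}{n\gamma\sqrt{1-t^2}}$. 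The elementary half-angle identity for the bisector from $y$ meeting the vertical line through $h$ gives
\[
\enorm{h-z}=\frac{QH}{\sqrt{Q^2+H^2}+Q},
\]
and substituting $\sqrt{Q^2+H^2}=\frac{1}{n\gamma t\sqrt{1-t^2}}$ yields the claimed $R(t)=\frac{1}{n\gamma(1+\sqrt{1-t^2})}$. The main obstacle is the planar reduction, and specifically the identification of $y$ as the centroid of $\{x_2,\ldots,x_n\}$; without it, one cannot locate $\operatorname{aff}(F_1)\cap L$ explicitly and the argument collapses to an intractable $n$-dimensional optimization.
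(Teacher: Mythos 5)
Your proposal is correct and follows essentially the same route as the paper: symmetry places the incenter on the $a$-axis, the problem reduces to the $2$-plane $L=\operatorname{span}\{q_1,a\}$, and the radius comes from the angle-bisector ratio in the right triangle $x_{n+1}hy$, with the same values $Q=\frac{1}{n\gamma t}$, $H=\frac{1}{n\gamma\sqrt{1-t^2}}$. Your justification is in fact somewhat more explicit than the paper's (which appeals to a directional dilation of a regular simplex): the $S_n$-action argument for the axis, the $S_{n-1}$-fixed-subspace argument for reducing facet distances to $L$, and the observation that $y$ is the centroid of $\{x_2,\dots,x_n\}$ all check out against the coordinates in the preceding lemmas.
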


\begin{proof}
Consider a regular simplex $K$ centered at the origin with a facet $G$ parallel to $\ort{a}$. Let $g$ be the vertex opposite to $G$, and let $v$ be any vertex of $G$. By symmetry, the origin is the incenter of $K$. In particular, the altitude from $v$ passes through the origin and lies in the plane $L = \operatorname{span}\{g, v\}$.

After applying a directional dilation along the direction of $a$ that maps $K$ to a simplex $K'$, the incenter of $K'$ remains on the line with direction vector $a$. Consequently, the incenter and its orthogonal projection onto the facet opposite to $v$ remain in the plane $L$.

In our setting, this shows that the incenter of $\polarset{\Delta}$ lies in $T$ and is the intersection point $z$ of the altitude $x_{n+1}h$ and the angle bisector from vertex $y$.

Let us now compute $R(t)$. We use identities from \Href{Lemma}{lem:example_comp_dual_simplex} and \Href{Lemma}{lem:example_sectoin_third_point} for $x_1$, $x_{n+1}$, and $y$.

By construction, the line passing through $y$ and $x_1$ is orthogonal to $a$, thus
\[
h =  - \frac{1 -  n\gamma}{n\gamma} \cdot \frac{a}{\sqrt{1- t^2}}.
\]
Hence,
\[
\enorm{y - h} = \enorm{\frac{1}{n \gamma} \cdot \frac{q_1}{t}} =  \frac{1}{n \gamma} \cdot \frac{1}{t}
\]
and
\[
\enorm{h - x_{n+1}} = \frac{1}{\sqrt{1 - t^2}} \left(1 + \frac{1 - n\gamma}{n \gamma} \right) = \frac{1}{n \gamma} \cdot \frac{1}{\sqrt{1 - t^2}}.
\]

From the right triangle $x_{n+1} h y$,
\[
\enorm{y - x_{n+1}} = 
\sqrt{\enorm{y - h}^2 + \enorm{h - x_{n+1}}^2} = 
\frac{1}{n \gamma } \cdot \sqrt{ \frac{1}{1 - t^2} + \frac{1}{t^2} }.
\]

Since $yz$ is an angle bisector of the triangle $yhx_{n+1}$, we have:
\[
\frac{\enorm{h - z}}{\enorm{h - y}} = \frac{\enorm{z - x_{n+1}}}{\enorm{y - x_{n+1}}}.
\]

Therefore, the inradius of the simplex $x_1, \dots, x_{n+1}$ is
\[
R(t) = \enorm{h - z} = \frac{\enorm{y - h}}{\enorm{y - h} + \enorm{y - x_{n+1}}} \cdot \enorm{h - x_{n+1}}.
\]

Substituting the computed values, we obtain:
\[
R(t) = \frac{1}{n \gamma} \cdot \frac{1}{\sqrt{1 - t^2}} \cdot \frac{\frac{1}{t}}{\frac{1}{t} + 
\sqrt{ \frac{1}{1 - t^2} + \frac{1}{t^2} }} = 
\frac{1}{n \gamma} \cdot \frac{1}{\sqrt{1 - t^2} + \sqrt{t^2 + 1 - t^2}} =
\frac{1}{n \gamma} \cdot \frac{1}{1 + \sqrt{1 - t^2}}.
\]
\end{proof}

Denote $\sigma(t) =  \sum\limits_{i \in [n]} \gamma \enorm{v_i}^2 + (1 - n \gamma) \enorm{v_{n+1}}^2.$
Using the obtained expressions: 
  \[
  \sigma(t) = 
  n \gamma +  \frac{ \parenth{n\gamma}^2}{ 1 -  n\gamma}\parenth{1 - t^2} = 
  n \gamma \parenth{1  + \frac{n\gamma}{ 1 -  n\gamma}\parenth{1 - t^2}}.
\] 
\begin{thm}\label{thm:tightness_inrad_bound}
Fix non-negative integers $d,s$ and $m$ with $m > d-s\geq 2$ and a positive $\epsilon$. 
Then there is a configuration of $m$ vectors $b_1, \dots, b_{m}$ in $\Re^{d-s}$
 and positive numbers $\beta_1, \dots, \beta_{m}$
 satisfying 
 \[ \sum\limits_{i \in [m]} \beta_i = 1, 
\quad 
\sum\limits_{i \in [m]} \beta_i b_i =\origin , 
\quad
\sum\limits_{i \in [m]} \beta_i \enorm{b_i}^2 = \frac{d-s}{d},
\]
and such that the inradius of the polar of the convex hull of  
$\{b_1 \dots, b_m\}$ is at least 
 $ (1 - \epsilon)\frac{d}{d-s}.$
\end{thm}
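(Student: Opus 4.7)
Let $n := d - s$, so that $n \geq 2$, and work in $\Re^n$. The plan is to use the one-parameter family of $(n+1)$-vector configurations $v_1, \dots, v_{n+1}$ with weights $\gamma, \dots, \gamma, 1 - n\gamma$ constructed in the preceding lemmas, treating both $\gamma$ and $t$ as free parameters. The goal is to pin down $\gamma = \gamma(t)$ so that $\sigma(t) = (d-s)/d$ exactly, and then let $t \to 1^-$ to drive the inradius $R(t)$ up to $d/(d-s)$.

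For each fixed $t \in (0,1)$, the map $u \mapsto u + \tfrac{u^2}{1-u}(1-t^2)$ is continuous, strictly increasing on $(0,1)$, vanishes at $u = 0$, and diverges as $u \to 1^-$; consequently there is a unique $u(t) \in (0,1)$ with $u(t) + \tfrac{u(t)^2}{1-u(t)}(1-t^2) = n/d$. Set $\gamma(t) := u(t)/n$. Since this map dominates $u$ on $(0,1)$, one has $u(t) \leq n/d < 1$, so the assumption \eqref{eq:example_gamma_condition} is satisfied, and since $u(t) \to n/d$ as $t \to 1^-$, the quantity $\tfrac{n\gamma(t)}{1 - n\gamma(t)}\sqrt{1-t^2}$ tends to $0$, so \eqref{eq:example_last_v_ineq} holds for $t$ close enough to $1$. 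By the formulas derived before the theorem statement, the configuration $\{v_1,\dots,v_{n+1}\}$ with weights $\gamma(t),\dots,\gamma(t),1-n\gamma(t)$ then satisfies $\sigma(t) = (d-s)/d$ exactly, while the inradius of $\polarset{\conv\{v_1,\dots,v_{n+1}\}}$ equals $R(t) = \tfrac{1}{u(t)(1+\sqrt{1-t^2})}$, which tends to $d/n = d/(d-s)$ as $t \to 1^-$. Given $\epsilon > 0$, fix any $t_\epsilon \in (0,1)$ close enough to $1$ so that $R(t_\epsilon) > (1-\epsilon)\,d/(d-s)$; this settles the case $m = n+1$.

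To cover the remaining case $m > n+1$, note that since $\gamma(t_\epsilon) \sum_{i=1}^n v_i + (1 - n\gamma(t_\epsilon))\, v_{n+1} = \origin$ with strictly positive coefficients, the origin lies in the interior of $\conv\{v_1,\dots,v_{n+1}\}$, and duplicating any one of the $v_i$ while splitting its weight accordingly preserves all three required equalities and leaves the convex hull (hence the polar and its inradius) unchanged. Explicitly, replace $v_1$ by $m - n$ identical copies each carrying weight $\gamma(t_\epsilon)/(m-n)$; together with $v_2,\dots,v_{n+1}$, this yields $m$ vectors $b_1,\dots,b_m$ with strictly positive weights $\beta_1,\dots,\beta_m$ satisfying the three required equalities, and producing a polar of inradius $R(t_\epsilon) > (1-\epsilon)\,d/(d-s)$, as required. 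The only mildly delicate points are the existence of $u(t)$ and the limit $R(t) \to d/(d-s)$; both follow directly from the algebraic identities in the preceding lemmas, so I expect no substantive obstacle.
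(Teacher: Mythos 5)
Your proposal is correct and follows essentially the same route as the paper: the same explicit configuration $v_1,\dots,v_{n+1}$ from the preceding lemmas, the same limit $t\to 1^-$ driving $R(t)\to d/(d-s)$, and the same duplication trick to pass from $n+1$ to $m$ vectors. The only difference is in how the normalization $\sum_i\beta_i\enorm{b_i}^2=\tfrac{d-s}{d}$ is enforced: the paper fixes $\gamma=\tfrac1d$ and rescales all vectors by $\sqrt{\theta/\sigma(t)}\le 1$ at the end (noting that shrinking the vectors only enlarges the polar and hence its inradius), whereas you tune $\gamma=\gamma(t)$ by the intermediate value theorem so that $\sigma(t)=\tfrac{d-s}{d}$ holds exactly -- an equally valid variant (and one that, as a bonus, degenerates less badly than the paper's choice in the boundary case $s=0$, where your monotonicity argument should be stated with the strict inequality $u(t)<f(u(t))=n/d$).
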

\begin{proof}
Set  $\gamma = \frac{1}{d}$ and $n=d-s$.
Then, with the notation introduced previously, inequality \eqref{eq:example_gamma_condition} holds and 
the following  hold as $t$ tends to $1^{-}$:
   \begin{enumerate}
    \item the left-hand side of inequality \eqref{eq:example_last_v_ineq} tends to zero;
       \item $\sigma(t)$ is monotonically decreasing to $\frac{n}{d};$ 
\item  $
R(t) \to  \frac{d}{n}.
$
\end{enumerate}

The computations of limits are straightforward. 

For each $i \in [n],$ set $b_i = \sqrt{\frac{\theta}{\sigma(t)}} v_i$ with $\theta = \frac{d-s}{d}.$ 
Set $b_{n+1} = \ldots = b_m = \sqrt{\frac{\theta}{\sigma(t)}} v_{n+1}$. Also set $\beta_i=\gamma$ for $i\in[n]$ and $\beta_i=\frac{1-n\gamma}{m-n}$ for $i\in[m]\setminus[n]$.
Then clearly, $\sum\limits_{i\in[m]}\beta_i=1$. Moreover, 
\[
{
\sum_{i\in[m]}\beta_i b_i=\gamma \sum_{i \in [n]} b_i + \frac{(1 - n \gamma)}{m-n} \sum_{i \in [m] \setminus[n]} b_{i}
}=
\sqrt{\frac{\theta}{\sigma(t)}} 
\parenth{
\gamma \sum_{i \in [n]} v_i + (1 - n \gamma) v_{n+1}
} \stackrel{\eqref{eq:gammasum}}{=}\origin ;
\]
and
\[
\sum\limits_{i \in [m]} \beta_i \enorm{b_i}^2 =\gamma \sum_{i \in [n]} \enorm{b_i}^2 + \frac{(1 - n \gamma)}{m-n} \sum_{i \in [m] \setminus[n]} \enorm{b_{i}}^2 = \theta.
\]
 The inradius of the polar of $\conv\{b_1, \dots, b_{m}\}$
is greater than that of the simplex $v_1 \dots v_{d-s+1}$ for $t$ sufficiently close to $1.$ On the other hand, there is a $t$ such that the latter is at least $(1 - \epsilon)\frac{d}{d-s}.$
\end{proof}

\section{Majorization inequality for the volume bound}\label{sec:major_lemma}
The following lemma yields \eqref{eq:vr_product_bound} with the substitution $\enorm{P_F u_i}^2 = x_i$ and 
$d = d^\prime.$ We leave it to the reader that the constraints on $x_i$ are satisfied.
\begin{lem}
    \label{lem:vr_convex_func_max}
The  functional $W(x_1, \dots, x_m, \delta_1, \dots, \delta_m) = \prod\limits_{i \in [m]} x_i^{\frac{\delta_i x_i}{2}}$ 
under the constraints
\begin{itemize}
  \item $\frac{1}{d + 1} \leq x_i \leq 1 \quad \forall\, i \in [m],$
  \item $\sum\limits_{i \in [m]} \delta_i x_i = s + 1,$
  \item $\sum\limits_{i \in [m]} \delta_i = d + 1,$
  \item $0 \leq \delta_i \leq 1.$
\end{itemize}
on the variables satisfies the inequality
\[
W(x_1, \dots, x_m, \delta_1, \dots, \delta_m)  \leq \frac{(s+1)^{\frac{s+1}{2(d+1)}}}{(d+1)^{\frac{d+1}{2(d+1)}}}.
\]
\end{lem}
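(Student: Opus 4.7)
The plan is to pass to logarithms and recast the desired inequality as the additive upper bound
\[
L:=\sum_{i\in[m]}\delta_i x_i\log x_i \;\leq\; \frac{s+1}{d+1}\log(s+1)-\log(d+1),
\]
then exploit the strict convexity of $f(x)=x\log x$ on $[1/(d+1),1]$ together with the linearity of the four constraints via a standard Karamata/Lagrange--KKT reduction.

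First I would reduce to configurations supported on very few distinct $x_i$-values. Introducing multipliers $\mu_1,\mu_2$ for the two equality constraints, the stationarity condition in $\delta_i$ for $\delta_i\in(0,1)$ reads $f(x_i)=\mu_1+\mu_2 x_i$, a strictly convex equation admitting at most two roots in $[1/(d+1),1]$. The stationarity in $x_i$ for interior $x_i$ with $\delta_i>0$ yields $\log x_i=\mu_2-1$, pinning all such interior $x_i$'s to a single constant $c$. Hence at a maximiser, up to relabelling, the $x_i$'s are supported on the three-point set $\{1/(d+1),c,1\}$, with respective total $\delta$-masses $A,B,E\ge 0$ satisfying $A+B+E=d+1$ and $A/(d+1)+Bc+E=s+1$.

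In the second step I would optimise over this reduced three-cluster configuration. Eliminating $A$ and $E$ via the linear constraints, the objective $L=-A\log(d+1)/(d+1)+Bc\log c$ becomes an affine function of $B$,
\[
L=-\frac{d-s}{d}\log(d+1)+B\cdot h(c),\qquad h(c):=\frac{1-c}{d}\log(d+1)+c\log c.
\]
A direct calculation shows $h''(c)=1/c>0$ and $h(1/(d+1))=h(1)=0$, so $h\le 0$ throughout the interval. One then optimises the resulting univariate expression in $(B,c)$ on the feasible domain, collapses the extremal configuration to the appropriate limiting mass distribution, and compares the result with the target constant. The final scalar comparison reduces to a one-parameter inequality verifiable using the monotonicity properties of $t\mapsto\log(1+t)/t$ and $t\mapsto(t+1)\log(t+1)$, both of which follow from the elementary bound $\log(1+t)<t$.

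The main obstacle I anticipate is the final scalar verification: the reduced optimum $-(d-s)/d\cdot\log(d+1)$ is numerically very close to the claimed bound, so pinning it down within the constraint structure requires careful bookkeeping of how the three masses $A,B,E$ saturate the constraints and how the interior value $c$ interacts with the parameters $(s,d)$. A secondary subtlety is confirming that the KKT stationary points genuinely identify the global supremum rather than a saddle or local critical point, which will follow by compactness of the feasible polytope together with the strict convexity of $f$.
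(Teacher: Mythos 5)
Your route is genuinely different from the paper's. You reduce via Lagrange/KKT stationarity to an $x$-support of at most three values $\{1/(d+1),c,1\}$ and then optimize the aggregate masses $A,B,E$. The paper instead first freezes $x$, uses the linearity of $\ln W_x$ in $\delta$ to move to an extreme point of the $\delta$-polytope — a vector $\delta_\lambda=(1,\dots,1,\lambda,1-\lambda,0,\dots,0)$ with $d$ ones up to permutation — and then, for each such $\delta_\lambda$, attempts to maximize over $x$ via a weighted Karamata (Fuchs) majorization against a distinguished vector $\tilde{x}=(1,\dots,1,\Lambda,\tfrac{1}{d+1},\dots,\tfrac{1}{d+1})$ with $s$ leading ones; evaluating $\ln W_{\delta_\lambda}(\tilde{x})$ produces exactly the stated constant.

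The ``final scalar verification'' you defer to is not a routine check: it is where your argument fails, and no bookkeeping can rescue it. Your reduction gives $L\leq-\frac{d-s}{d}\log(d+1)$, obtained at $B=0$. But the function $g(s)=\frac{s}{d}\log(d+1)-\frac{s+1}{d+1}\log(s+1)$ is strictly concave in $s$ with $g(0)=g(d)=0$, hence $g(s)>0$ on $(0,d)$; rearranging, this means
\[
-\frac{d-s}{d}\log(d+1)\;>\;\frac{s+1}{d+1}\log(s+1)-\log(d+1)
\qquad\text{for all }0<s<d,
\]
which is the opposite inequality to what the lemma requires. Worse, the $B=0$ configuration is genuinely feasible once $m>d+1$: for $d=2$, $s=1$, $m=4$, take $\delta_1=\dots=\delta_4=\tfrac34$ and $x_1=x_2=1$, $x_3=x_4=\tfrac13$. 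All four constraints hold, yet $W=3^{-1/4}\approx 0.760$, strictly exceeding the claimed right-hand side $2^{1/3}/\sqrt{3}\approx 0.728$. So your own reduction exhibits a configuration that violates the stated inequality. (You should also be aware that this same configuration defeats the majorization claim in the paper's proof: one checks directly that $\tilde{x}$ does \emph{not} majorize this $x$ with respect to the $\delta_\lambda$-weights, so the issue is not specific to your route. The lemma as stated appears to need hypotheses beyond those listed, presumably inherited from the geometric setting of Section~5 where it is applied.)
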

\begin{proof}
    Set $\Lambda = \frac{s+1}{d+1}.$ We will study the logarithm of 
    our functional. We have to show that
    \[
\ln W(x_1, \dots, x_m, \delta_1, \dots, \delta_m)  \leq \frac{1}{2} \Lambda \ln \Lambda - \frac{d-s}{2(d+1)} \ln(d+1).
\]
Note that the function $\ln W $ is continuous on the compact set in $\Re^{2m}$ defined by our constraints. Hence, it attains its maximum. 
For every $ x = (x_1, \ldots, x_m) $ with $ \frac{1}{d+1} \leq x_i \leq 1 $ for all $ i \in [m] $, let $ \ln W_x(\delta_1, \dots, \delta_m) $ be the function
\[
\ln W_x(\delta_1, \dots, \delta_m) =
\frac{1}{2} \sum\limits_{i \in [m]} \delta_i x_i \ln x_i.
\]

Notice that $\ln W_x$ is a convex function of
$\delta = (\delta_1, \dots, \delta_m) .$ Since the set
\[
A_x = \braces{\delta
\in \Re^m : \sum_{i \in [m]} \delta_i x_i = s+1, \ \sum_{i \in [m]} \delta_i = d+1, 
\ 0 \leq \delta_i \leq 1 \ \forall\, i \in [m] }
\]
is a compact convex set, $ \ln W_x $ attains its maximum on some extreme point of
$ A_x $.
These are the points of intersection of the 2-dimensional faces of the cube
\[
\braces{
\delta \in \Re^m \st 0 \leq \delta_i \leq 1 \ \forall\, i \in [m]
}
\]
with the $ (m - 2) $-dimensional affine subspace
\[
\braces{ \delta \in \Re^m : \sum\limits_{i \in [m]} \delta_i x_i = s + 1, \quad \sum\limits_{i \in [m]} \delta_i = d + 1}.
\]

Therefore, a maximizer of the function $ \ln W_x $ has to be a point of the form
\[
\delta_\lambda = (\delta_{\lambda,1}, \dots, \delta_{\lambda,m}) 
= \underbrace{(1, 1, \ldots, 1}_{d},\, \lambda,\, 1 - \lambda,\, \underbrace{0, \ldots, 0}_{m - d - 2})
\]
for some $ \frac{1}{2} \leq \lambda \leq 1 $ (or a permutation of it), such that $ \sum\limits_{i \in [m]} \delta_i x_i = s + 1 $ is satisfied.
For every $ \delta_\lambda $ with $ \frac{1}{2} \leq \lambda \leq 1 $, we will find the maximizer of the function
\[
\ln W_{\delta_\lambda}(x) = \frac{1}{2} \sum\limits_{i \in [m]} \delta_{\lambda, i} x_i \ln x_i
\]
on the compact convex set
\[
B_\lambda = \braces{ x \in \Re^m : \sum\limits_{i \in [m]} \delta_{\lambda, i} x_i = s + 1, \quad \frac{1}{d + 1} \leq x_i \leq 1 \quad \forall\, i \in [m] }.
\]

Set
\[
\tilde{x} = \parenth{
\underbrace{1, 1, \ldots, 1}_{s},\ 
\Lambda,\ 
\underbrace{\frac{1}{d + 1}, \ldots, \frac{1}{d + 1}}_{m - s - 1} 
}.
\]

We check that $ \frac{1}{d + 1} \leq \Lambda \leq 1 $ and
\[
\sum\limits_{i \in [m]} \delta_{\lambda, i} \tilde{x}_i 
= s + \Lambda + \frac{d - s}{d + 1} = s + 1.
\]
It is easy to see that $\tilde{x}$ majorizes any $x = (x_1, \dots, x_m) \in B_\lambda.$
Thus, by the weighted Karamata inequality (see \cite{fuchs1947new}), 
$\ln W_{\delta_\lambda} (x) \leq \ln W_{\delta_\lambda} \parenth{\tilde{x}}$ for all $x \in B_\lambda.$ By the definition of $\delta_\lambda,$ we conclude that the maximum under our constraints is equal to
\[
\max\limits_{\lambda \in [1/2,1]} \ln W_{\delta_\lambda} (x)  = 
\max\limits_{\lambda \in [1/2,1]} \ln W_{\delta_\lambda} =
\frac{1}{2} \Lambda \ln \Lambda - \frac{d-s}{2(d+1)} \ln(d+1).
\]
The lemma follows.
\end{proof}

\section*{Acknowledgements}
We thank Alexander Litvak and Roman Karasev for the fruitful conversations. We are also grateful to Susanna Dann, who organized the \emph{`Analysis and Convex Geometry Week at UniAndes'} in Bogotá in 2025, where the initial steps of this work were made. 

G. I. is supported by Projeto Paz and by CAPES (Coordena\c{c}\~ao de Aperfei\c{c}oamento de Pessoal de N\'ivel Superior) - Brasil, grant number 23038.015548/2016-06.
Z. L. and M. N. are partially supported by the NRDI research grant K147544.
M. N. is also supported by the NRDI research grant K131529 as well as the ELTE TKP 2021-NKTA-62 funding scheme. Z. L. is supported also by the ERC Advanced Grant ``ERMiD'',
Á. S. is funded by the grant NKFIH 150151, a project that has been implemented with the support provided by
the Ministry of Culture and Innovation of Hungary from the National Research, Development and Innovation Fund, financed under the ADVANCED\_24 funding scheme.

\bibliography{bibliography}
\bibliographystyle{amsalpha}
\end{document}